\providecommand{\U}[1]{\protect\rule{.1in}{.1in}}
\newtheorem{theorem}{Theorem}[section]
\theoremstyle{plain}
\newtheorem{acknowledgement}{Acknowledgement}[section]
\newtheorem{definition}{Definition}[section]
\newtheorem{lemma}{Lemma}[section]
\newtheorem{proposition}{Proposition}[section]
\newtheorem{remark}{Remark}[section]
\newcommand{\RR}{\mathbb{R}}
\numberwithin{equation}{section}
\date{\today}
\begin{document}
	\title[]{Decay estimates for time-fractional porous medium flow with nonlocal pressure}
	\author{Nguyen Anh Dao, Vu Tien Anh Nguyen}
	\address{N. A. Dao: School of Economic Mathematics and Statistics, University of Economics Ho Chi Minh City (UEH), Ho Chi Minh City, Viet Nam
	}\email{anhdn@ueh.edu.vn}

	\address{V. T. A. Nguyen: Faculty of Mathematics and Computer Science, University of Science, Ho Chi Minh City, Vietnam}
	\email{nguyenvutienanhmath1111@gmail.com}
	\subjclass[2010]{35R11, 45K05, 47G20, 35A23}
	\keywords{Time-fractional derivative,  Nonlocal pressure, Fractional Sobolev spaces, Stroock--Varopoulos inequality,  Decay estimates}
	\begin{abstract}
		The main purpose of this paper is to study weak solutions of a time-fractional porous medium equation with nonlocal pressure:
		\[ 
		\partial^\alpha_t u=\operatorname{div}\left( |u|^{m}\nabla (-\Delta)^{-s} u\right)   \,\, \text{in }  \mathbb{R}^N\times (0,T) \,,
		\]
		with $m\geq 1$, $N\geq 2$, $\frac{1}{2}\leq s<1$, and $\alpha\in(0,1)$.
		\\
		We first prove the existence of weak solutions to the equation with initial data in $L^1(\RR^N)\cap L^\infty(\RR^N)$  (possibly mixed sign). After that, we establish the $L^q-L^\infty$ decay estimate of weak solutions:
		\[ \|u(t)\|_{L^\infty(\RR^N)} \leq C t^{-\frac{\alpha}{q(1-\lambda_0)+ m}} \|u_0\|_{L^{q}(\RR^N)}^{\frac{q(1-\lambda_0)}{q(1-\lambda_0) + m}} ,\quad \text{for   }   t\in(0,\infty), 
		\]
		with $\lambda_0=\frac{N-2(1-s)}{N}$.
	\end{abstract}
	\maketitle
	\tableofcontents
	\section{Introduction}
	In this paper, we would like to study weak solutions of  the following  equation
	\begin{align}\label{1}
		\left\{
		\begin{array}
			{ll}%
			\frac{d}{dt} \left[k* (u-u_0)\right]	 	=\operatorname{div}\left( |u|^{m}  	\nabla  (-\Delta)^{-s}u  \right)   \,\, &\text{in }  \mathbb{R}^N\times (0,T)  \,,
			\\
			u(x,0)=u_0(x) &\text{in}
			~~\mathbb{R}^N  \,,
		\end{array}
		\right.
	\end{align}
	where $m\geq 1$, $\frac{1}{2}\leq s<1$, and space dimension $N \geq 2$. The symbol $(-\Delta)^{-s}$ denotes by the inverse of the fractional Laplacian operator, and kernel $k\in L^1_{{\rm loc}}(\RR_+)$ is of  $\mathcal{PC}$  type, see Definition \ref{Def2.1}. Through the paper, we denote 
	\[k * u(t) =\displaystyle\int^t_0 k(t-\tau)  u(\tau)\, d\tau\,.\] A typical kernel for our study is $k(t) =  g_{1-\alpha} (t) e^{-\nu t}$, $\nu\geq 0$, where $g_\alpha(t)=\frac{t^{\alpha-1}}{\Gamma(\alpha)}$  (the so-called  Riemann--Liouville kernel), and   $\Gamma(\beta)=\int_{0}^{\infty}  \lambda^{\beta-1}e^{-\lambda} \,d\lambda$,  $\beta>0$ is the usual gamma function. In particular, if we take $\nu=0$, then $\frac{d}{dt}\left[ k* (u-u_0)\right]$  becomes the time-fractional derivative of Caputo-type. In this case, we denote it by $\partial^\alpha_{t}  u(t)$ for short. 
	\\
	For a definitive study, we would like to consider $k(t)=g_{1-\alpha}(t)$ in this paper, even our proofs below also hold for $k(t)=g_{1-\alpha}(t)e^{-\nu t}$, $\nu\geq 0$. Moreover, we always assume that initial data  $u_0\in L^1(\RR^N)\cap L^\infty(\RR^N)$ except Theorem \ref{MainThe3}. 
	\\ 
	When  $\alpha=1$, and $s=0$,  Eq \eqref{1}  is a porous medium equation. This model arises from considering a compressible fluid with a nonnegative density distribution $u(x,t)$, and with Darcy’s law leading to the equation
	\begin{equation}\label{1.2}
		u_t - \operatorname{div} (u  \vec{\mu} ) = 0\,,
	\end{equation}
	where  $\vec{\mu} =\nabla {\bf p}$ is the potential, and  ${\bf p}$ is the pressure. Many other different relations between the density,
	the potential, and the pressure arising in the applications. For example, the model
	proposed by Leibenzon and Muskat states a law in which ${\bf p}= h(u)$, where $h$ is a
	nondecreasing scalar function (see more examples in \cite{vazB1}). In the paper, we study a potential that takes into account long-range interactions, namely ${\bf p} = (-\Delta)^{-s} u$.
	Eq \eqref{1.2} in terms of nonlocal pressure has been studied by many authors in \cite{BiImKa, Ca-So-va, Ca-va, Ca-va2011, Anh1, StTsvz17} and the references cited therein. There are many questions, addressed to equations of this type, which are the object of active research, such as those pertaining to existence and uniqueness, regularity, the behavior of solutions in short time and in large time, and so on.
	\\
	
	In \cite{Cap1999}, Caputo introduced another model of the porous medium equation
	\begin{equation}\label{1.3}
		\partial^\alpha_t u-\operatorname{div}
		\left( \kappa(u)  \nabla  u \right) = f 
	\end{equation} 
	by modifying  Darcy's law.  Eq \eqref{1.3} is a  time-fractional diffusion equation. A strong motivation for investigating such equation comes from physics and engineering for memory effect, viscoelasticity, porous media, etc  (see, e.g.,  \cite{Cap1999, Co-Ca, Go-Ma-Mo-Pa, Ku-Ry-Ya,  Mai1996, Mai1, Mai2, Met-Kla} and the references therein).
	\\
	At the moment, we want to present several mathematical works for the time-fractional equations in order to motivate our research. In \cite{Zacher2009}, Zacher proved the existence and uniqueness of weak solutions to certain abstract evolutionary integro-differential equations in Hilbert spaces
	\begin{equation}\label{1.4}
		\left\{\begin{array}{cl}
			&\frac{d}{dt}  \left< k*\left(u(t)-u_0\right),v\right>_{\mathcal{H}} + a\left(t, u(t), v\right)  =\left<f(t), v\right>_{\mathcal{V}',\mathcal{V}}  ,  \quad v\in \mathcal{V}, \, \text{a.e. }  t\in(0,T) \,,
			\\
			&u(0)  = u_0 \,, 
			\\
			&u\in H^{1}_2\left([0,T];\mathcal{V}'\right)\cap L^2\left( [0,T]; \mathcal{V}\right)\,,
		\end{array}\right.
	\end{equation}
	where  $a : (0,T) \times \mathcal{V} \times \mathcal{V} \to \RR$ is a
	coercive bilinear form; and  $\mathcal{V}, \mathcal{H}$ are real separable Hilbert spaces such that $\mathcal{V}$  is densely, and continuously embedded into $\mathcal{H}$, and 
	$\mathcal{V}  \hookrightarrow \mathcal{H}  =  \mathcal{H}'\hookrightarrow\mathcal{V}'$. We would like to mention that Allen \cite{Allen1} also obtained the unique result for a different class of weak solutions that correspond to the Marchaud derivative.
	\\
	
	Furthermore, the short-time and long-time behavior of solutions of the time-fractional diffusion equations of non-divergence and divergence form have been studied by the authors in \cite{Ke-Si-ve-Za, Ke-Si-Za,  Ko1990, Ko2008, Li3, Zac3, Xiao}, and the references cited therein. In particular, such behavior have been obtained for mild solutions, represented via Duhamel's formula. For a simple introduction and for our proof below, let us consider the following equation 
	\begin{equation}\label{1.5a}
		\left\{  \begin{array}{cl}
			\partial^\alpha_t u  = \Delta u +  f,\, &(x,t)\in\RR^N \times(0,T)  \,,  
			\\
			u(x,0)  =u_0(x) ,\, &x\in\RR^N  \,.
		\end{array} \right.
	\end{equation}
	It is known that if $u_0$ and $f$ are sufficiently regular, then Eq \eqref{1.5a} possesses a unique mild solution 
	\begin{align}\label{1.6}
		u(x,t)& =  \int_{\RR^N}  Z(x-\xi,t)  u_0(\xi)  \, d\xi +  \int^t_0  \int_{\RR^N}  Y(x-\xi,t-\tau)  f(\xi, \tau)  \, d\xi  d\tau   \nonumber
		\\
		&  :=     Z(t) * u_0  +\int^t_0  Y(t-\tau)  *f(\tau)  \, d\tau   \,,
	\end{align}
	with  
	\[ 
	\widehat{Z}(\xi,t)=   E_{\alpha,1}(- t^\alpha|\xi|^2)  ,\, \text{ and   } \, \widehat{Y}(\xi, t)  = t^{\alpha-1} E_{\alpha, \alpha}\left(- t^\alpha|\xi|^2\right)\,,
	\]
	where $E_{\alpha, \beta}(z) =\displaystyle \sum^{\infty}_{k=0}  \frac{z^k}{\Gamma (\alpha k+\beta)}$, $z\in\mathbb{C}$    is
	the Mittag--Leffler function.
	\\ 
	Concerning the pointwise estimates of fundamental solutions  $Z(x,t), Y(x,t)$, we refer to \cite[Chapter 5]{Ko2004-book} for details, see also  the seminal work by Kochubei \cite{Ko1990,Ko2008}, and by 
	Eidelman--Kochubei \cite{Ei-Ko}. By exploiting the pointwise estimates of the fundamental solutions, one obtains the $L^r$-decay estimates of $Z(x,t), Y(x,t)$  (see, e.g., \cite{Ke-Si-ve-Za, Ke-Si-Za, Li3}).
	\\
	Precisely, for  $t>0$  we have
	\begin{equation}\label{1.7}
		\| Z (t)\|_{L^r(\RR^N)}\lesssim t^{-\frac{\alpha N}{2}\left(1-\frac{1}{r}\right)} ,  \text{ if }  1\leq  r< \kappa_1(N) ,\, \kappa_1 (N)=\left\{ \begin{array}{cl}
			\frac{N}{N-2}  &\text{ if  }   N\geq 3\,,
			\\
			\infty & \text{ otherwise } \end{array}\right. .
	\end{equation}
	Note that if $N\leq  2$, then \eqref{1.7} holds true for all $r\in[1,\infty]$.
	And, 
	\begin{equation}\label{1.8}
		\|Z (t)\|_{L^{\kappa_1(N),\infty}(\RR^N)}\lesssim t^{-\alpha}\,, \text{ if } N\geq 3 \,,
	\end{equation}
	where $L^{q,\infty}$ is the weak-$L^q$ space.
	\\
	For  $Y(x,t)$, one  has  
	\begin{equation}\label{1.9}
		\|Y(t)\|_{L^r(\RR^N)} \lesssim t^{\alpha-1-\frac{\alpha 
				N}{2}\left(1-\frac{1}{r}\right) } \,,  \text{ if }   1\leq r <\kappa_2(N) \,, N>4 \, , 
	\end{equation}
	with \, $\kappa_2(N)= \left\{  \begin{array}{cl}
		&  \frac{N}{N-4} , \text{  if  }  N>4 ,
		\\
		& \infty , \text{  otherwise  }
	\end{array}\right.$. If $N<4$, then \eqref{1.9}  holds true for all $r \in [1,\infty]$.
	\\
	In addition, 
	\begin{equation}\label{1.10a}
		\| Y (t)\|_{L^{\kappa_2 (N),\infty} (\RR^N)}\lesssim t^{-\alpha-1} \,,  \text{ if }   N>4 \,.
	\end{equation}
	For our purpose later, we also present the $L^r$-estimate of $\nabla Y$. 
	\\
	Denote  $\kappa_3(N)=\frac{N}{N-3}$ if $N>3$,  and $\kappa_3(N)=\infty$  otherwise.  Then, the following statements hold true
	\begin{equation}\label{1.10c}
		\|\nabla Y(t)\|_{L^r(\RR^N)} \lesssim t^{\frac{\alpha}{2}-1-\frac{\alpha N}{2}\left(1-\frac{1}{r}\right)} \,,  \text{ if }   1\leq r <\kappa_3(N) , \,N>3 \,.
	\end{equation}
	If $N \leq 3$, then \eqref{1.10c} also holds for $r \in[1,\infty]$. 
	\\
	When $N>3$, and $r= \kappa_3(N)$, 
	\eqref{1.10c} only holds in the weak-$L^r$. That is 
	\begin{equation}\label{1.10d}
		\|\nabla Y(t)\|_{L^{r,\infty}(\RR^N)} \lesssim t^{-\alpha-1} \,.
	\end{equation}
	From $\eqref{1.7}-\eqref{1.10d}$, we can deduce the $L^q-L^p$ decay estimates of mild solutions of Eq \eqref{1.5a}. For instance, if $f=0$, then it follows from \eqref{1.7} and the Young inequality that
	\begin{equation}\label{1.10b}
		\|u(t)\|_{L^p(\RR^N)}= \|Z * u_0(t)\|_{L^p(\RR^N)} 
		\leq \|Z(t)\|_{L^{r}(\RR^N)} \|u_0\|_{L^{q}(\RR^N)}  \lesssim  t^{-\frac{\alpha N}{2}\left(1-\frac{1}{r}\right)} \|u_0\|_{L^{q}(\RR^N)} \,,
	\end{equation}
	provided that  $1\leq r<\kappa_1(N)$, $N\geq 3$,   and   $\displaystyle\frac{1}{p} +1  =\frac{1}{r}+\frac{1}{q}$.
	\\
	We also note that such decay estimates have been established by the authors in \cite{Ke-Si-Za} for the mild solutions to Eq \eqref{1.5a} type with fractional diffusion, and by the authors in \cite{Li3} for the mild solutions to Keller--Segel type time-space fractional diffusion equation.
	\\
	On the other hand, to obtain the above estimates for weak solutions of time-fractional diffusion equations, one uses the energy method. For example, the authors  \cite{Ke-Si-ve-Za} studied the decay estimates of weak solutions to the nonlocal in-time diffusion equation
	\begin{equation}\label{1.12}
		\left\{  \begin{array}{cl}
			&  \partial^\alpha_t u = \operatorname{div}  \left( A(x,t) \nabla u \right) , \quad (x,t)\in \RR^N\times (0,T)\,, 
			\\
			& u(x,0)  =u_0(x) ,\quad x\in\RR^N  \,,
		\end{array} \right.
	\end{equation}
	where $A \in L^\infty_{{\rm loc}} \big( \RR^N\times [0,\infty) ; \RR^{N\times N}\big)$ satisfies the  elliptic condition. Particularly, 
	the authors obtained  the $L^2$ decay estimate of weak solutions  to Eq \eqref{1.12}:
	\begin{equation}\label{1.13}
		\|u(t)\|_{L^2(\RR^N)} \leq C t^{-\frac{\alpha N}{N+4}} ,\quad t>0 \,, 
	\end{equation} 
	provided that $u_0\in L^1(\RR^N)\cap L^2(\RR^N)$.  Note that  
	constant $C>0$ in \eqref{1.13} depends on $\|u_0\|_{L^1(\RR^N)}$, $\|u_0\|_{L^2(\RR^N)}$, and the parameters involved,  see  \cite[Theorem 6.1]{Ke-Si-ve-Za}.
	\\
	Compare \eqref{1.13} to \eqref{1.10b} when  $p=r=2$, $q=1$,  one observes that the $L^2$ decay estimates of mild solution and weak solution are different. This point of view is very different from the fundamental heat equation with the Gaussian kernel  $G(x,t) = \displaystyle (4\pi t)^{-\frac{N}{2}} e^{-\frac{|x|^2}{4t}}$. 
	\\
	Besides, the energy method has been used by Vergara--Zacher \cite{Zac3} in order to obtain the decay estimates of weak solutions of Eq \eqref{1.12},  of time-fractional $p$-Laplace equation, and of the time-fractional porous medium equation with the homogeneous Dirichlet boundary condition.  In addition, this method is very useful in studying the regularity of weak solutions. For instance,  Zacher \cite{Zac1} obtained a H\"older continuous regularity of weak solutions of the time-fractional diffusion equations.    
	\\
	It is interesting to mention that Giga-Namba \cite{Giga} studied the Hamilton--Jacobi equations with Caputo’s
	time fractional derivative in  $\mathbb{T}^N:=  \RR^N \setminus \mathbb{Z}^N$. By introducing a suitable notion of viscosity solutions, the authors established the unique existence, stability, and regularity results of a viscosity solution.  
	\\
	
	Now, we discuss some recent results concerning our equation. When $m=1$,  and $0<s<\frac{1}{2}$,  Allen--Caffarelli--Vasseur  \cite{Caf2.0} obtained an existence of weak solutions to Eq \eqref{1} with initial data satisfying the exponential decay at infinity.  Moreover, the same authors \cite{Caf1.0} also established Holder's regularity of solutions to the equation with nonlocal integral.
	By following these papers, Djida--Nieto--Area \cite{Dji-Ni-Ar}  extended the existence result of solutions of Eq \eqref{1} to $m\geq 1$ with the same assumption of exponential decay at infinity on initial data. In addition, they proved the finite speed of propagation of solutions. That is solution is compactly supported for any time $t>0$  if the initial datum has compact support in $\RR^N$. We emphasize that the proofs of the existence of solutions to Eq \eqref{1} for $0<s<\frac{1}{2}$ in \cite{Caf2.0, Dji-Ni-Ar}  requires the exponential decay at infinity of initial data to construct the sequence of approximating solutions. 
	\\ 
	From our knowledge, Eq \eqref{1} has not been investigated for the case $\frac{1}{2}\leq s<1$, and initial data in $L^1(\RR^N)\cap L^\infty(\RR^N)$ possibly changed sign. Thus, we would like to study weak solutions of Eq \eqref{1} in this paper. Moreover, we establish the decay estimates of those solutions.
	\vspace{0.1in}
	\\
	\textbf{Notation.}
	Let us denote $X=L^1(\RR^N)\cap L^\infty(\RR^N)$,  equipped with the norm
	\[ 
	\|f\|_X  =  \|f\|_{L^1(\RR^N)} + \|f\|_{L^\infty(\RR^N)}  \,.
	\]
	Furthermore,  we set \,
	$\Theta(u) =  |u|^{m} \nabla (- \Delta )^{-s}u$,   $Q_T=\RR^N\times (0,T)$  for $ T>0$.
	And, $B_R$ is denoted by the open ball in $\RR^N$ with center at $0$, and radius $R$.
	\\
	Next, we denote $H^{-1}(\Omega)$ by the dual of $H^1_0(\Omega)$ for any bounded domain $\Omega$ in $\RR^N$.
	\\
	Through the paper, the constant $C$ may change step by step. Moreover, $C = C(\alpha, s, N, m)$
	means that  $C$ merely depends on the parameters $\alpha, s, N, m$.
	\\
	Finally, the notation
	$A \lesssim B$ means that there exists a universal constant $C > 0$ such that  $A \leq  C B$.
	\subsection*{Main results}  
	We introduce the notion of weak solutions to Eq \eqref{1}.  
	\begin{definition}\label{Def1}
		Let $u_{0} \in X$. We say that $u$ is a weak solution of Eq \eqref{1} if $u\in L^\infty\big(0,T; X\big)$, $\Theta(u)\in L^2(Q_T)$  for $T>0$, and for every test function $\varphi  \in H^1 \big(0,T; L^2(\RR^N)\big)  \cap   L^2\big(0,T; H^1(\RR^N)\big)$  with $\varphi(T)=0$ there holds
		\begin{align}\label{3.3}
			\int_{Q_T} \varphi_t  [g_{1-\alpha}*(u-u_0)]  \,  dxdt =  \int_{Q_T} \Theta(u) \cdot \nabla \varphi \, dxdt  \,.
		\end{align}
	\end{definition}
	Then, our first result is of the existence of weak solutions to Eq \eqref{1}.
	\begin{theorem}\label{MainThe1}  Let $\frac{1}{2}\leq s<1$,  $0<\alpha<1$,  $m\geq 1$,  and let $u_0\in X$. Then,  Eq \eqref{1} has a weak solution $u$ in $\RR^N\times(0,\infty)$. Moreover, for any $q\in [1, \infty]$, there holds true
		\begin{align}\label{1.10}
			\|u(t)\|_{L^q(\RR^N)} \leq \|u_0\|_{L^q (\RR^N)} \,,   \quad \text{for  } \, t\in (0,\infty)\,. 
		\end{align}
		Furthermore, for any $T>0$  we have
		\begin{equation}\label{1.11}  
			\int^T_0 \big\||u|^{\theta_q-1}u(t)\big\|^2_{H^{1-s}(\RR^N)} \, dt  \leq C(\alpha,m,q) T^{1-\alpha}\|u_0\|^q_{L^q(\RR^N)} \,, 
		\end{equation}
		with \, $\theta_q=\frac{m+q}{2}$, $q>1$.
	\end{theorem}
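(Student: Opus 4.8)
The approach is a standard approximation–energy–compactness scheme, adapted to the time-fractional, nonlocal-pressure setting. First I would regularize Eq~\eqref{1} on a ball $B_R$: replace $|u|^m$ by a bounded, strictly positive, smooth truncation $\phi_\varepsilon(u)$ (e.g. $\phi_\varepsilon(u) = (|u|^2+\varepsilon^2)^{m/2}$ truncated at level $1/\varepsilon$), mollify the initial datum, and impose homogeneous Dirichlet conditions on $\partial B_R$; the operator $(-\Delta)^{-s}$ is taken as the inverse Dirichlet fractional Laplacian on $B_R$. For each fixed $\varepsilon, R$ the resulting problem is a nondegenerate time-fractional quasilinear equation, whose solvability (existence and uniqueness of a weak solution in the Zacher class $H^{1}_2([0,T];\mathcal V')\cap L^2([0,T];\mathcal V)$ with $\mathcal V = H^1_0(B_R)$) follows from the abstract theory recalled around \eqref{1.4}, after a fixed-point argument in the nonlinear coefficient. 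The crux of the theorem, however, is not existence of approximants but the derivation of the two \emph{uniform} bounds \eqref{1.10} and \eqref{1.11}.

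For the $L^q$ contraction \eqref{1.10}, I would test the approximate equation with $|u_\varepsilon|^{q-2}u_\varepsilon$ (suitably truncated near $q=1$ and $q=\infty$, then pass to the limit in the exponent). The right-hand side becomes $-\int \nabla\!\big(|u_\varepsilon|^{q-2}u_\varepsilon\big)\cdot \phi_\varepsilon(u_\varepsilon)\,\nabla(-\Delta)^{-s}u_\varepsilon\,dx$, and the key algebraic point is that $\nabla\!\big(|u|^{q-2}u\big)\cdot|u|^m = c\,\nabla\!\big(|u|^{q+m-2}u\big) \cdot (\text{positive factor})$ up to a constant, so after integrating by parts once more one gets $+c\int |u_\varepsilon|^{q+m-2}u_\varepsilon\,(-\Delta)^{1-s}u_\varepsilon\,dx$ — wait, more precisely one rewrites it, using $\nabla(-\Delta)^{-s} = (-\Delta)^{-s}\nabla$ and an integration by parts, as a bilinear form of the type $\langle (-\Delta)^{(1-s)/2} A(u_\varepsilon),\,(-\Delta)^{(1-s)/2} B(u_\varepsilon)\rangle$ with $A(u)=|u|^{q-2}u$, $B(u)=|u|^m u$ (schematically). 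The Stroock–Varopoulos inequality then guarantees this quantity is nonnegative (it bounds below the "mismatched" bilinear form by a nonnegative multiple of $\|(-\Delta)^{(1-s)/2}(|u|^{\theta}u)\|_{L^2}^2$ for the appropriate $\theta$). Hence the nonlinear term has a good sign, and the fundamental inequality for the Caputo derivative, $\int \partial_t^\alpha u\cdot |u|^{q-2}u \ge \tfrac1q\,\partial_t^\alpha \|u\|_{L^q}^q$ (the convexity/Córdoba–Córdoba-type inequality for $\mathcal{PC}$ kernels, as in Zacher's work), yields $\partial_t^\alpha \|u_\varepsilon(t)\|_{L^q}^q \le 0$, from which \eqref{1.10} follows by the comparison principle for the fractional ODE and survives the limit $\varepsilon,R\to\infty$, $q\to1,\infty$.

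For the space-time energy bound \eqref{1.11}, I would test with $|u_\varepsilon|^{q-2}u_\varepsilon$ again but now \emph{keep} the good term rather than discarding it: integrating in time and using that $k*(\cdot)$ with $k=g_{1-\alpha}$ satisfies $\int_0^T \partial_t^\alpha (\tfrac1q\|u\|_{L^q}^q)\,dt = (g_{1-\alpha}* \tfrac1q\|u\|_{L^q}^q)(T) - \text{(initial term)} \le g_{2-\alpha}(T)\,\tfrac1q\|u_0\|_{L^q}^q = \tfrac{T^{1-\alpha}}{\Gamma(2-\alpha)}\,\tfrac1q\|u_0\|_{L^q}^q$ (here one uses $\|u(t)\|_{L^q}\le\|u_0\|_{L^q}$ from the first part), one obtains
\[
\int_0^T \big\langle (-\Delta)^{(1-s)/2}(|u_\varepsilon|^{\theta_q-1}u_\varepsilon),\,(-\Delta)^{(1-s)/2}(|u_\varepsilon|^{\theta_q-1}u_\varepsilon)\big\rangle\,dt \le C(\alpha,m,q)\,T^{1-\alpha}\|u_0\|_{L^q}^q,
\]
where $\theta_q=\frac{m+q}{2}$ emerges exactly because the Stroock–Varopoulos bound on the mismatched form $A(u)=|u|^{q-2}u$, $B(u)=|u|^m u$ produces the exponent $(q+m)/2$. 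Combining the Gagliardo seminorm $[\,\cdot\,]_{H^{1-s}}$ with the uniform $L^1\cap L^\infty\hookrightarrow L^2$ bound on $|u_\varepsilon|^{\theta_q-1}u_\varepsilon$ (itself a consequence of \eqref{1.10}) gives the full $H^{1-s}$-norm, i.e.\ \eqref{1.11}.

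Finally, the uniform bounds from \eqref{1.10}–\eqref{1.11} give, via the standard Aubin–Lions–Simon compactness argument for fractional-in-time equations (equiboundedness of $g_{1-\alpha}*(u_\varepsilon-u_{0,\varepsilon})$ in a suitable Sobolev-in-time space, plus the spatial compactness from \eqref{1.11}), a subsequence converging a.e.\ and strongly in $L^p_{\mathrm{loc}}(Q_T)$; this is enough to pass to the limit in the weak formulation \eqref{3.3}, identifying $\Theta(u_\varepsilon)\rightharpoonup \Theta(u)$ in $L^2(Q_T)$ (its $L^2$-bound being exactly the content of \eqref{1.11} after unwinding the definition of $\Theta$). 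I expect the main obstacle to be the justification of the sign of the nonlinear term — that is, matching the integration-by-parts manipulations of $\int \nabla(|u|^{q-2}u)\cdot|u|^m\nabla(-\Delta)^{-s}u\,dx$ with the hypotheses of the Stroock–Varopoulos inequality when $u$ changes sign and $m\ge1$ is not an integer — together with keeping every estimate uniform in the truncation parameter $\varepsilon$ (the degeneracy $|u|^m$ at $u=0$) and in $R$ (tails at infinity, handled by the $L^1$ bound). Everything else is bookkeeping.
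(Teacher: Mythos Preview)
Your proposal is essentially correct and shares the same core: test with $|u|^{q-2}u$, use the convexity inequality \eqref{2.6} for the fractional time derivative to get $\partial_t^\alpha\tfrac1q\|u\|_{L^q}^q$ on the left, rewrite the nonlinear term as $\int |u|^{m+q-2}u\,(-\Delta)^{1-s}u\,dx$ and apply Stroock--Varopoulos (Lemma~\ref{LemSt-va}) to extract both the sign and the $\dot H^{1-s}$ energy of $|u|^{\theta_q-1}u$. Your derivation of the $T^{1-\alpha}$ factor and of the exponent $\theta_q=(m+q)/2$ is exactly what the paper does in \eqref{3.14c}--\eqref{3.14d}.

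The differences are in the scaffolding around this core. For the regularization, you truncate the degenerate coefficient $|u|^m$ and invoke Zacher's abstract theory directly; the paper instead adds a vanishing viscosity $-\varepsilon\Delta u$ (Eq~\eqref{P_epsilon}), first constructs a \emph{mild} solution by a contraction argument (Theorem~\ref{Themildsol}), then solves the linear Dirichlet problem \eqref{Prob-R} with the frozen source $\operatorname{div}(\Theta(u_\varepsilon)\mathbf{1}_\Omega)$ via Zacher, and identifies the two by uniqueness. The advantage of the paper's route is that the nonlinear term is never modified, so Stroock--Varopoulos delivers exactly $\||u|^{\theta_q-1}u\|_{\dot H^{1-s}}^2$ at every stage; with your truncation $\phi_\varepsilon(u)$ the energy controls an $\varepsilon$-dependent quantity and you must argue separately that it converges to the right thing. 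For the compactness, you invoke an Aubin--Lions--Simon argument adapted to fractional time (bounding $g_{1-\alpha}*(u_\varepsilon-u_{0,\varepsilon})$), whereas the paper uses the Rakotoson--Temam criterion (Theorem~\ref{Thecompactness}), which sidesteps any time-derivative estimate: one only needs weak convergence in $L^2(0,T;H^{1-s}(B_M))$, pointwise-in-$t$ weak convergence in $L^2(B_M)$, and the equi-integrability $\int_E\|v_{\varepsilon,R}(t)\|_{L^2}^2\,dt\to 0$ as $|E|\to 0$, the last being immediate from \eqref{3.53}. One minor correction: the $L^2(Q_T)$ bound on $\Theta(u)$ is not ``exactly the content of \eqref{1.11}''; the paper obtains it instead from the Riesz-transform estimate \eqref{3.40c} combined with the $L^\infty$ bound \eqref{1.10}.
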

	\begin{remark}
		It follows from the regularity of the weak solution that   
		$u(t)$ converges to $u_0$ in $H^{-1}(B_R)$ for any $R>0$  (see Remark \ref{Rem4.2} below). Thus, $u(t)$ possesses an initial trace $u_0$ in this sense.
	\end{remark}
	Next, we point out the $L^q-L^\infty$ decay estimate of weak solutions to  Eq \eqref{1}.
	\begin{theorem}\label{MainThe2} Assume hypotheses as in Theorem \ref{MainThe1}. If $u$ is a weak solution of Eq \eqref{1} in $\RR^N\times(0,\infty)$, then for  $q\in(1,\infty)$   there exists a constant $C=C(\alpha,s,N, m, q)>0$ such that
		\begin{align}\label{1.5}
			\|u(t)\|_{L^\infty(\RR^N)} \leq C t^{-\frac{\alpha}{q(1-\lambda_0)+ m}} \|u_0\|_{L^{q}(\RR^N)}^{\frac{q(1-\lambda_0)}{q(1-\lambda_0) + m}} ,\quad \text{for   }   t\in(0,\infty),
		\end{align}
		with $\lambda_0=\frac{N-2(1-s)}{N}$.
	\end{theorem}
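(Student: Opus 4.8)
The plan is to derive \eqref{1.5} by a Moser-type iteration over a geometrically increasing sequence of Lebesgue exponents. The three ingredients are: a family of $L^p$-energy inequalities for the weak solution; the fractional Sobolev embedding $\dot{H}^{1-s}(\RR^N)\hookrightarrow L^{2/\lambda_0}(\RR^N)$, which is precisely where $\lambda_0=\frac{N-2(1-s)}{N}$ enters (indeed $2/\lambda_0=\frac{2N}{N-2(1-s)}$ is the critical exponent of $H^{1-s}$); and a comparison lemma for fractional differential inequalities.

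First I would prove, for every $p\ge q$, the pointwise-in-time energy inequality
\[
\partial^\alpha_t\|u(t)\|_{L^p(\RR^N)}^p\;\le\;-\,c_p\,\|u(t)\|_{L^{(m+p)/\lambda_0}(\RR^N)}^{\,m+p},\qquad t>0,
\]
where $c_p$ is comparable to a positive constant (depending only on $s,N,m,q$) uniformly in $p\ge q$; for $p=q$ this is essentially the differential version of \eqref{1.11}. The derivation follows that of \eqref{1.11}: test \eqref{3.3} with a truncated multiple of $|u|^{p-2}u$ (and pass to the limit in the truncation, as in the approximation scheme behind Theorem \ref{MainThe1}); handle the memory term by the convexity inequality for the Caputo derivative, $\int_{\RR^N}|u|^{p-2}u\,\partial^\alpha_t u\,dx\ge\frac1p\partial^\alpha_t\|u(t)\|_{L^p(\RR^N)}^p$; integrate by parts on the right, turning $\int\nabla(|u|^{p-2}u)\cdot|u|^m\nabla(-\Delta)^{-s}u$ into a multiple of $\int|u|^{m+p-2}u\,(-\Delta)^{1-s}u$, which by the Stroock--Varopoulos inequality applied to $r\mapsto|r|^{m+p-2}r$ is bounded below by a constant times $\||u|^{\theta_p-1}u\|_{\dot{H}^{1-s}(\RR^N)}^2$ with $\theta_p=\frac{m+p}{2}$; and finish with the Sobolev inequality $\|f\|_{L^{2/\lambda_0}(\RR^N)}\lesssim\|f\|_{\dot{H}^{1-s}(\RR^N)}$ together with the identity $\||u|^{\theta_p-1}u\|_{L^{2/\lambda_0}}^2=\|u\|_{L^{(m+p)/\lambda_0}}^{m+p}$. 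Note that $(m+p)/\lambda_0>p$ for every $p\ge q$, so the dissipation controls a strictly higher $L^p$-norm.

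Next I would record a fractional comparison lemma of the type standard in energy-method decay proofs: if $y\ge0$ is locally bounded on $[0,\infty)$, $\theta>0$, $\beta\ge0$, $\kappa_0>0$, and $\partial^\alpha_t y(t)\le-\kappa_0\,t^{\beta}\,y(t)^{1+\theta}$ for $t>0$, then $y(t)\le C(\alpha,\theta,\beta)\,\kappa_0^{-1/\theta}\,t^{-(\alpha+\beta)/\theta}$ for all $t>0$; this is proved by comparison with the self-similar supersolution $\overline y(t)=A\,t^{-(\alpha+\beta)/\theta}$, whose two sides have matching powers of $t$, taking $A$ large and using $\overline y(0^+)=+\infty$. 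The iteration then goes as follows. Set $p_0=q$ and $p_{n+1}=(m+p_n)/\lambda_0$, so $p_n\uparrow\infty$ geometrically with $p_{n+1}/p_n\to1/\lambda_0>1$. I claim that $\|u(t)\|_{L^{p_n}(\RR^N)}\le K_n\,t^{-\gamma_n}$ for $t>0$, starting from $\gamma_0=0$ and $K_0=\|u_0\|_{L^q(\RR^N)}$ (which is \eqref{1.10}); note $y_n(t)=\|u(t)\|_{L^{p_n}}^{p_n}$ is bounded uniformly in $t$ by \eqref{1.10}, so the lemma applies. Given the bound at level $p_{n-1}$, interpolate $\|u\|_{L^{p_n}}\le\|u\|_{L^{p_{n-1}}}^{1-\mu_n}\|u\|_{L^{p_{n+1}}}^{\mu_n}$ with $\frac1{p_n}=\frac{1-\mu_n}{p_{n-1}}+\frac{\mu_n}{p_{n+1}}$ (valid since $p_{n-1}<p_n<p_{n+1}$); solving for $\|u\|_{L^{p_{n+1}}}$ and inserting it, together with $\|u(t)\|_{L^{p_{n-1}}}\le K_{n-1}t^{-\gamma_{n-1}}$, into the level-$p_n$ energy inequality yields
\[
\partial^\alpha_t y_n(t)\;\le\;-\,c_{p_n}\,K_{n-1}^{-\frac{(m+p_n)(1-\mu_n)}{\mu_n}}\;t^{\,\gamma_{n-1}\frac{(m+p_n)(1-\mu_n)}{\mu_n}}\;y_n(t)^{1+\theta_n},\qquad \theta_n=\frac{m+p_n}{p_n\mu_n}-1>0,
\]
and the comparison lemma produces the level-$p_n$ bound with explicit $K_n,\gamma_n$. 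Since $\mu_n\to\frac1{1+\lambda_0}$ and $\theta_n\to\lambda_0>0$ and $p_n$ grows geometrically, an analysis of these recursions shows that $\gamma_n$ increases to $\frac{\alpha}{q(1-\lambda_0)+m}$, that the power of $\|u_0\|_{L^q}$ appearing in $K_n$ tends to $\frac{q(1-\lambda_0)}{q(1-\lambda_0)+m}$, and that the remaining numerical factor in $K_n$ stays bounded (the relevant series converge because $\sum_n p_n^{-1}<\infty$); letting $n\to\infty$ and using $\|u(t)\|_{L^\infty}=\lim_{p\to\infty}\|u(t)\|_{L^p}$ gives \eqref{1.5}.

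The principal obstacle is the memory of the Caputo derivative: unlike in classical parabolic Moser iteration one cannot localize the energy inequality to a later time interval, so every step must be closed globally on $(0,\infty)$, which is why I carry the full time-decay bound from the previous level into the dissipation and rely on self-similar barriers rather than local-in-time cut-offs. The second delicate point is the bookkeeping over infinitely many steps — verifying that $\gamma_n$ and the $\|u_0\|_{L^q}$-power in $K_n$ converge to the asserted values and that the accumulated constant does not blow up; this hinges on $\theta_n$ staying bounded away from $0$ and on the geometric growth of $p_n$ (and, for the comparison-lemma constant, may require choosing the $p_n$ so that $(\alpha+\beta_n)/\theta_n$ stays away from integers). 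Justifying the convexity and Stroock--Varopoulos manipulations at the level of weak solutions is routine, via the truncation/approximation already used for Theorem \ref{MainThe1}.
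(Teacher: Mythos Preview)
Your overall strategy---Moser iteration along the sequence $p_{n+1}=(m+p_n)/\lambda_0$ driven by the $L^p$ energy estimate and the embedding $\dot H^{1-s}\hookrightarrow L^{2/\lambda_0}$---coincides with the paper's. The difference is in how each iteration step is closed, and this is where your argument has a genuine gap.

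The paper never invokes a fractional ODE comparison lemma. Instead it uses a fact you do not mention: every norm $t\mapsto\|u(t)\|_{L^p(\RR^N)}$, $p\in[1,\infty]$, is \emph{nonincreasing} (Remark~\ref{Rem3.1}, carried to the limit). Convolving the pointwise inequality $F(t)\le -c\,\||u|^{\theta_q-1}u(t)\|_{\dot H^{1-s}}^2$ with $1=g_{1-\alpha}\ast g_\alpha$ on $[\tau,t]$ and pulling the nonincreasing function $\|u(\cdot)\|_{L^{q_{n+1}}}$ out of the time integral yields the purely \emph{algebraic} step
\[
\|u(t)\|_{L^{q_{n+1}}}^{\,m+q_n}\ \le\ C\,\frac{\theta_{q_n}^2}{q_n(q_n-1)}\,(t-\tau)^{-\alpha}\,\|u(\tau)\|_{L^{q_n}}^{\,q_n},\qquad 0<\tau<t,
\]
which is then chained along a dyadic partition $t_n=t(1-2^{-n})$. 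No interpolation between adjacent levels and no ODE-type comparison enters.

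Your route, by contrast, hinges on the lemma ``$\partial_t^\alpha y\le -\kappa_0\,t^{\beta}y^{1+\theta}\Rightarrow y(t)\le C(\alpha,\theta,\beta)\,\kappa_0^{-1/\theta}t^{-(\alpha+\beta)/\theta}$''. For $\beta=0$ this follows from the Vergara--Zacher comparison \cite[Theorem~7.1]{Zac3} (see Remark~\ref{Rem5.4}), and the $y(0)$--dependence indeed drops out of the large-time asymptotics. But from your second step onward you need $\beta>0$, and there the proposed barrier argument breaks down: the function $\bar y(t)=At^{-(\alpha+\beta)/\theta}$ has $\bar y(0^+)=+\infty$, so it is not in the domain of the Caputo operator $\partial_t^\alpha=\frac{d}{dt}\big[g_{1-\alpha}\ast(\cdot-\cdot(0))\big]$, and the standard comparison machinery (which compares a subsolution and a supersolution sharing the same finite initial value, via the positivity/monotonicity of the kernel) is unavailable. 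Making the $t^\beta$--coefficient lemma rigorous would require substantial extra work that you do not supply. The paper's monotonicity-plus-dyadic-splitting device is precisely what replaces this lemma, and it also makes the bookkeeping of constants much lighter: one chains a single one-step inequality rather than tracking a recursion for both $\gamma_n$ and the interpolation exponents $\mu_n$.
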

	\begin{remark}
		Such estimate \eqref{1.5} was obtained by the authors in \cite{BiImKa,Anh1,StTsvz17} when $\alpha=1$.  
	\end{remark}
	Our final result is of the $L^1-L^p$ decay estimates of very weak solutions. Here, $u$ is  called a very weak solution of Eq \eqref{1} if $u\in L^\infty\big(\RR^N \times (\tau, T)\big)$, $\Theta (u)  \in L^2\big(\RR^N \times (\tau, T) \big)$ for any $0<\tau<T<\infty$, and $u$ fulfills \eqref{3.3}  for all test functions $\varphi\in\mathcal{C}^\infty_c (Q_T)$ for $T>0$.  
	\begin{theorem}\label{MainThe3}  Let $\frac{1}{2}\leq s<1$,  $0<\alpha<1$,   $m\geq 1$, and let $u_0\in L^1(\RR^N) \cap L^{q_0}(\RR^N)$ for some $q_0\in (1, \infty)$. Then,  Eq \eqref{1} has a 
		very weak solution $u$ in $\RR^N\times(0,\infty)$. In addition,  $u$  satisfies \eqref{1.10} for $q\in[1,q_0]$,  and \eqref{1.11} for  $q\in (1,q_0]$. 
		\\
		Beside, for any $p\in (1,\infty]$  there exists a constant $C=C(\alpha,s,N, m, q_0, p)>0$ such that
		\begin{align}\label{1.7a}
			\| u(t) \|_{L^{p}(\RR^N)} \leq C t^{-\frac{\alpha\left(1-\frac{1}{p}\right)}{q_0(1-\lambda_0)+m}} \|u_0\|^{\frac{q_0(1-\lambda_0) \left(1-\frac{1}{p}\right)}{ q_0 (1-\lambda_0)+ m }}_{L^{q_0}(\RR^N)}
			\| u_0 \|^{\frac{1}{p}}_{L^{1}(\RR^N)}  ,\quad \text{for   }   t\in(0,\infty) \,.
		\end{align}
	\end{theorem}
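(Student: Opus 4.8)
The plan is to obtain the very weak solution as the limit of the weak solutions furnished by Theorem~\ref{MainThe1} applied to truncated initial data, and then to read off \eqref{1.7a} by interpolating the conserved $L^1$ bound against the $L^\infty$ decay of Theorem~\ref{MainThe2}.

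\textbf{Step 1: Approximation of the datum.} Since $u_0\in L^1(\RR^N)\cap L^{q_0}(\RR^N)$ with $q_0\in(1,\infty)$, pick $u_{0,n}\in X$ (e.g.\ the truncation of $u_0$ at height $n$) with $u_{0,n}\to u_0$ in $L^1(\RR^N)$ and in $L^{q_0}(\RR^N)$, $|u_{0,n}|\le|u_0|$ pointwise, hence $\|u_{0,n}\|_{L^q(\RR^N)}\le\|u_0\|_{L^q(\RR^N)}$ for every $q\in[1,q_0]$ by interpolation. By Theorem~\ref{MainThe1} each $u_{0,n}$ yields a weak solution $u_n$ of Eq~\eqref{1}, satisfying \eqref{1.10} and \eqref{1.11} with $u_0$ replaced by $u_{0,n}$; by Step~1 these bounds are uniform in $n$ on every interval $(0,T)$ for $q\in[1,q_0]$. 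Applying Theorem~\ref{MainThe2} to $u_n$ with $q=q_0$ gives, uniformly in $n$,
\begin{equation}\label{eq:unLinfty}
\|u_n(t)\|_{L^\infty(\RR^N)}\le C\, t^{-\frac{\alpha}{q_0(1-\lambda_0)+m}}\,\|u_0\|_{L^{q_0}(\RR^N)}^{\frac{q_0(1-\lambda_0)}{q_0(1-\lambda_0)+m}},\qquad t>0,
\end{equation}
with $C=C(\alpha,s,N,m,q_0)$. Thus $(u_n)$ is bounded in $L^\infty\big(\RR^N\times(\tau,T)\big)$ for every $0<\tau<T<\infty$; combining this with the uniform mass bound $\|u_n(t)\|_{L^1(\RR^N)}\le\|u_0\|_{L^1(\RR^N)}$ and the mapping properties of $\nabla(-\Delta)^{-s}$ (a Riesz potential of order $2s-1\ge 0$ since $\tfrac12\le s<1$), one checks that $\Theta(u_n)=|u_n|^m\nabla(-\Delta)^{-s}u_n$ is bounded in $L^2\big(\RR^N\times(\tau,T)\big)$ uniformly in $n$.

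\textbf{Step 2: Compactness and limit.} From \eqref{3.3}, $\tfrac{d}{dt}[g_{1-\alpha}*(u_n-u_{0,n})]=\operatorname{div}\Theta(u_n)$ is bounded in $L^2\big(\tau,T;H^{-1}(B_R)\big)$ for every $R>0$; together with the uniform $H^{1-s}$ bound on $|u_n|^{\theta_{q_0}-1}u_n$ from \eqref{1.11}, the fractional Aubin--Lions--Simon type argument used in the proof of Theorem~\ref{MainThe1} extracts a subsequence with $u_n\to u$ a.e.\ in $\RR^N\times(0,\infty)$ and strongly in $L^r_{{\rm loc}}$, while $\nabla(-\Delta)^{-s}u_n\rightharpoonup\nabla(-\Delta)^{-s}u$ weakly in $L^2_{{\rm loc}}$; hence $\Theta(u_n)\rightharpoonup\Theta(u)$ weakly in $L^1_{{\rm loc}}$, and $g_{1-\alpha}*(u_n-u_{0,n})\to g_{1-\alpha}*(u-u_0)$ in $L^1_{{\rm loc}}$. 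Since any $\varphi\in\mathcal{C}^\infty_c(Q_T)$ has support bounded away from $t=0$, all terms stay controlled by \eqref{eq:unLinfty} and the energy bound, so one may pass to the limit in \eqref{3.3}: $u$ is a very weak solution. Its properties $u\in L^\infty(\RR^N\times(\tau,T))$, $\Theta(u)\in L^2(\RR^N\times(\tau,T))$, as well as \eqref{1.10} for $q\in[1,q_0]$ and \eqref{1.11} for $q\in(1,q_0]$, follow from the uniform bounds of Step~1 by weak lower semicontinuity of norms and Fatou's lemma.

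\textbf{Step 3: The $L^1$--$L^p$ estimate.} For fixed $t>0$ and $p\in(1,\infty]$, interpolating $L^p$ between $L^1$ and $L^\infty$ and using $\|u_n(t)\|_{L^1(\RR^N)}\le\|u_0\|_{L^1(\RR^N)}$ together with \eqref{eq:unLinfty} gives
\begin{equation}\label{eq:unLp}
\|u_n(t)\|_{L^p(\RR^N)}\le\|u_n(t)\|_{L^1(\RR^N)}^{\frac1p}\,\|u_n(t)\|_{L^\infty(\RR^N)}^{1-\frac1p}\le C\, t^{-\frac{\alpha\left(1-\frac1p\right)}{q_0(1-\lambda_0)+m}}\,\|u_0\|_{L^{q_0}(\RR^N)}^{\frac{q_0(1-\lambda_0)\left(1-\frac1p\right)}{q_0(1-\lambda_0)+m}}\,\|u_0\|_{L^1(\RR^N)}^{\frac1p}.
\end{equation}
Letting $n\to\infty$ (along a subsequence $u_n(t)\to u(t)$ a.e.) and applying Fatou's lemma yields \eqref{1.7a} for a.e.\ $t>0$, and then for all $t>0$ by the time regularity inherited from the construction. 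The main obstacle is Step~2: because the $L^\infty$ bound \eqref{eq:unLinfty} degenerates as $t\downarrow 0$, all uniform estimates needed for compactness are available only on cylinders $\RR^N\times(\tau,T)$ with $\tau>0$ --- precisely what forces the notion of \emph{very} weak solution --- and one must combine the fractional Aubin--Lions compactness with the weak $L^2$ convergence of $\nabla(-\Delta)^{-s}u_n$ carefully enough to identify the nonlinear nonlocal flux $\Theta(u_n)\rightharpoonup\Theta(u)$ in the limit.
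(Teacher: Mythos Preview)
Your proposal is correct and follows essentially the same strategy as the paper: approximate $u_0\in L^1\cap L^{q_0}$ by data in $X$, invoke Theorems~\ref{MainThe1} and~\ref{MainThe2} for the approximants to obtain uniform $L^\infty$ bounds that degenerate only as $t\downarrow 0$, extract a limit via the compactness machinery already used in the proof of Theorem~\ref{MainThe1}, and read off \eqref{1.7a} by interpolating $L^p$ between the conserved $L^1$ norm and the $L^\infty$ decay. The only cosmetic differences are that the paper mollifies ($u_{0,\delta}=u_0*\varrho_\delta$) rather than truncates, and that what you call a ``fractional Aubin--Lions--Simon type argument'' is in fact the Rakotoson--Temam compactness theorem (Theorem~\ref{Thecompactness}); also, your remark that $\|u_{0,n}\|_{L^q}\le\|u_0\|_{L^q}$ holds ``by interpolation'' is simply the pointwise bound $|u_{0,n}|\le|u_0|$.
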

	The paper is organized as follows. In the next section, we give some preliminary results involving the kernels of $\mathcal{PC}$ type, the fractional Sobolev spaces, and the fundamental inequalities of Riesz transforms, Riesz potential. In section 3, we study the regularizing equations to Eq \eqref{1}, and prove the a priori estimates to the approximating solutions. The proof of Theorem \ref{MainThe1} is given in Section 4.
	In the final section, we investigate the decay estimates of solutions through the proofs of  Theorems \ref{MainThe2}, \ref{MainThe3}. Moreover, we also establish another decay estimate via the comparison method.

	\section{Preliminary results}\label{Sec2}
	\subsection{Regularization of the kernel}
	We first define the kernels of $\mathcal{PC}$ type.
	\begin{definition}\label{Def2.1}
		A kernel $k \in L^1_{{\rm loc}}(\RR_+)$ is said to be of type $\mathcal{PC}$ if it is nonnegative and nonincreasing, and there exists a
		kernel $l \in L^1_{{\rm loc}}(\RR_+)$ such that $k * l = 1 $  in $(0,\infty)$. In this case, we say that $(k,l)$ is a $\mathcal{PC}$ pair and write $(k,l)\in\mathcal{PC}$.
	\end{definition} 
	An important example is given by
	\begin{equation}\label{2.0}
		k(t) = g_{1-\alpha} (t) e^{-\nu t},\,   \text{   and }  \,l ( t ) =  g_{\alpha} (t) e^{-\nu t} + \nu  \big( 1* [g_{\alpha} (\cdot) e^{-\nu \cdot}]\big)(t)  ,\quad t>0 \,,
	\end{equation}
	with $\alpha\in( 0 , 1 )$ and $\nu\geq  0$. Both kernels are strictly positive and decreasing. We refer to \cite{Ke-Si-ve-Za, Zac3} for the fundamental properties of these kernels.   
	In particular, if we take $\nu=0$  in \eqref{2.0}, then  $(k, l)=(g_{1-\alpha},g_\alpha)$. 
	\\
	Next, we introduce a smoothing effect to $g_{\alpha}$ by using the Yosida approximation.  
	Let $\mathcal{H}$ be a real Hilbert space. Then,  
	for $p\geq 1$  we define the operator $B$ by
	\begin{align*}
		B u=\frac{d}{d t}(g_{1-\alpha} * u), \quad D(B)=\left\{u \in L^p(0, T; \mathcal{H}): g_{1-\alpha} * u \in W_0^{1,p}(0, T ; \mathcal{H})\right\} \,,
	\end{align*}
	where the zero means the vanishing at $t=0$. 
	Its Yosida approximations $B_n$, defined by $B_n=n B(n+B)^{-1}$, $n \geq1$ enjoy the property that for any $u \in D(B)$, one has $B_n u \rightarrow B u$ in $L^p(0, T ; \mathcal{H})$ as $n \rightarrow \infty$. Furthermore, we have
	\begin{align}
		\label{2.1}
		B_n u=\frac{d}{d t}\left(g_{1-\alpha,n} * u\right), \quad u \in L^p(0, T ; \mathcal{H})  \,, 
	\end{align}
	where $g_{1-\alpha,n}=n s_{\alpha,n}$, and $s_{\alpha,n}$ is the solution of the scalar-valued Volterra equation
	\begin{align}
		\label{2.2}
		s_{\alpha,n}(t)+n\left(s_{\alpha,n}*g_\alpha\right)(t)=1, \quad t>0, \,n \geq 1.
	\end{align}
	Denote $h_{\alpha, n}$  by the resolvent kernel, associated with $n g_\alpha$, we have
	\begin{align}\label{2.3}
		h_{\alpha, n}\in L^1_{{\rm loc}}\left(\mathbb{R}_{+}\right),\quad	h_{\alpha,n}(t) +  n\left(h_{\alpha, n} * g_\alpha \right)(t)=n g_\alpha(t), \quad t>0,\, n \geq 1\,,
	\end{align}
	By taking the convolution  \eqref{2.3} with $g_{1-\alpha}$, and by using the fact $g_\alpha * g_{1-\alpha}=1$, we obtain
	\begin{align*}
		\left(g_{1-\alpha} * h_{\alpha, n}\right)(t)+n\left(\left[g_{1-\alpha} * h_{\alpha, n}\right] * g_\alpha\right)(t)=n\,, \quad t>0,
	\end{align*}
	which shows that
	\begin{align}
		\label{2.4}
		g_{1-\alpha, n}=n s_{\alpha, n}=g_{1-\alpha} * h_{\alpha, n}\,, \quad n\geq 1 .
	\end{align}
	In view of \eqref{2.4}, we conclude that the kernels $g_{1-\alpha, n} \in W^{1,1}(0, T)$ are nonnegative, and nonincreasing for all $n\geq 1$. 
	\\
	As a consequence, for any function $f \in L^p(0, T ; \mathcal{H})$, there holds true
	\begin{align}\label{2.5}
		h_{\alpha, n} * f \rightarrow f \text{ in } L^p(0, T ; \mathcal{H})\,,
	\end{align}
	as $n \rightarrow \infty$. In fact, defining $u=g_\alpha * f$, we have $u \in D(B)$, and
	\begin{align}\label{2.6a}		
		B_n u & =\frac{d}{d t}\left(g_{1-\alpha, n} * u\right)=\frac{d}{d t}\left(g_{1-\alpha} * g_\alpha * h_{\alpha, n} * f\right)\nonumber
		\\
		& =h_{\alpha, n} * f \rightarrow B u=f \quad \text { in } L^p(0, T ; \mathcal{H})\,,
	\end{align}
	as $n \rightarrow \infty$. In particular, $g_{1-\alpha, n} \rightarrow g_{1-\alpha}$ in $L^1(0, T)$ as $n \rightarrow \infty$.
	\\
	Now, we point out a fundamental identity for integro-differential operators of the form
	$\frac{d}{dt} ( k * u )$.
	\begin{lemma}\label{Lem-Iden1}
		Suppose $k\in W^{1,1} (0, T)$, and $G\in  C^1 (\RR)$. Then a straightforward computation shows that for a sufficiently smooth function $u$ on $( 0, T )$ one has for a.a.  $t \in ( 0, T )$ 
		\begin{align}\label{2.6}
			& G' \big(u ( t )\big) \frac{d}{dt}( k * u )( t ) =
			\frac{d}{dt} k * G ( u )  ( t )+
			\big[ -G \big(u ( t ) \big)
			+ G' \big(u ( t )\big)  u ( t ) \big] k ( t )\nonumber
			\\
			&+
			\int^t_0 \left\{
			G \big(u ( t- s ) \big)  -  G \big(u ( t ) \big) -  G'\big( u ( t )\big)  [u ( t-s )-u ( t ) ] \right\}\big(- \dot{k} ( s ) \big) ds \,. 
		\end{align}
	\end{lemma}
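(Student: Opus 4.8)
The plan is to reduce the identity to the standard differentiation rule for Sobolev convolutions applied to both $u$ and $G(u)$, followed by pure algebra; no genuinely hard analysis is involved. First I would record the basic rewriting. Since $k\in W^{1,1}(0,T)$, its absolutely continuous representative has a finite value $k(0^+)$, and for any bounded absolutely continuous function $v$ on $[0,T]$ one has $k*v\in W^{1,1}(0,T)$ with
\[
\frac{d}{dt}(k*v)(t)=k(0^+)v(t)+(\dot k*v)(t)\qquad\text{for a.a. }t\in(0,T),
\]
a standard property of convolutions. Writing $k(0^+)v(t)=k(t)v(t)-v(t)\int_0^t\dot k(s)\,ds$ and substituting $\sigma=t-s$ in $(\dot k*v)(t)=\int_0^t\dot k(t-s)v(s)\,ds=\int_0^t\dot k(s)v(t-s)\,ds$, this becomes
\[
\frac{d}{dt}(k*v)(t)=k(t)v(t)+\int_0^t\dot k(s)\,[v(t-s)-v(t)]\,ds.
\]

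Next I would apply this identity with $v=u$ and with $v=G(u)$; the latter is legitimate because $G\in C^1(\RR)$ together with $u$ sufficiently smooth and bounded make $G(u)$ bounded and absolutely continuous on $[0,T]$, while $\dot k\in L^1(0,T)$ ensures that all the convolution integrals against $\dot k$ converge absolutely. Multiplying the $v=u$ version by $G'(u(t))$ yields
\[
G'\big(u(t)\big)\,\frac{d}{dt}(k*u)(t)=G'\big(u(t)\big)\,u(t)\,k(t)+G'\big(u(t)\big)\int_0^t\dot k(s)\,[u(t-s)-u(t)]\,ds.
\]

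Finally I would start from the claimed right-hand side of \eqref{2.6}, substitute the $v=G(u)$ formula for $\frac{d}{dt}\big(k*G(u)\big)(t)$, and simplify: the terms $k(t)G(u(t))$ and $-G(u(t))\,k(t)$ cancel, leaving $G'(u(t))\,u(t)\,k(t)$ together with two integral terms. Merging the integral with kernel $\dot k(s)$ coming from $\frac{d}{dt}(k*G(u))$ with the integral with kernel $-\dot k(s)$ written explicitly in \eqref{2.6}, the pieces $G(u(t-s))-G(u(t))$ cancel and exactly $G'(u(t))\int_0^t\dot k(s)\,[u(t-s)-u(t)]\,ds$ survives. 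This matches the previous display, which establishes \eqref{2.6}. The only point deserving care is making the phrase ``sufficiently smooth $u$'' precise --- for instance $u\in W^{1,1}(0,T)\cap L^\infty(0,T)$ suffices --- which is exactly the regularity the approximating solutions will enjoy when this lemma is invoked later.
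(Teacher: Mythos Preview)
Your proof is correct. The paper does not actually supply a proof of this lemma: the statement itself already says ``a straightforward computation shows,'' and the accompanying remark only points to an integrated version in \cite[Lemma 18.4.1]{Gri1}. Your argument fills in precisely that computation, via the expected route of applying the convolution differentiation rule $\frac{d}{dt}(k*v)=k(0^+)v+\dot k*v$ to both $v=u$ and $v=G(u)$ and then rearranging; the algebra you describe checks out line by line.
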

	\begin{remark}\label{Rem2.1}
		An integrated version of \eqref{2.6} can be found in  \cite[Lemma 18.4.1]{Gri1}.
		\\
		On the other hand, observe that the last term in \eqref{2.6} is nonnegative in case $G$ is convex and $k$ is nonincreasing. This fact will be used repeatedly below. 
	\end{remark}
	\subsection{Fractional Laplacian and Sobolev spaces}
	Let $p\geq 1$, and $s\in(0,1)$.   For a given  domain $\Omega\subset\mathbb{R}^N$,  we  define the fractional Sobolev space
	$$ W^{s, p} (\Omega) =\left\{   u\in L^p(\Omega): \int_\Omega \int_\Omega \frac{|u(x)-u(y)|^p}  {|x-y|^{N+sp}}  dxdy<\infty   \right\}, $$
	endowed with the  norm 
	\[
	\|u\|_{W^{s, p}(\Omega)}  = \left( \|u\|^p_{L^p(\Omega)} +  \int_\Omega \int_\Omega \frac{|u(x)-u(y)|^p}  {|x-y|^{N+sp}}  dxdy \right)^{1/p}.
	\]
	Moreover, we also denote the homogeneous   fractional Sobolev space by 
	$\dot{W}^{s, p} (\Omega)$, endowed with the seminorm   
	$$\|u\|_{\dot{W}^{s, p}(\Omega)}  =\left(\int_\Omega \int_\Omega \frac{|u(x)-u(y)|^p}  {|x-y|^{N+sp}}  dx dy \right)^{1/p}  .$$
	In  particular, we  denote  $W^{s,2}(\mathbb{R}^N)$ by  $H^s(\mathbb{R}^N)$, which turns out to be a Hilbert space. It is well-known that we have the equivalent characterization
	\[
	H^{s}(\mathbb{R}^N) = \left\{  u\in L^2(\mathbb{R}^N) : \int_{\RR^N}  (1+|\xi|^{2s}) |\mathcal{F}\{u\}(\xi)|^2  d\xi  <\infty \right\},
	\]
	where $\mathcal{F}$ denotes the Fourier transform, and we have
	\[
	\|u\|_{H^{s}(\mathbb{R}^N)}  = \left( \int_{\RR^N} (1+|\xi|^{2s}) |\mathcal{F}\{u\}(\xi)|^2  d\xi \right)^{1/2}.
	\]
	In addition, for $u\in H^{s}(\mathbb{R}^N)$, the  fractional Laplacian  is defined by 
	\begin{equation}\label{2.19}
		(-\Delta)^s u(x) 
		= C(N,s) P.V.  \int_{\RR^N}  \frac{u(x)-u(y)}{|x-y|^{N+2s}} dy 
		=\mathcal{F}^{-1} \{|\xi|^{2s} \mathcal{F}(u)(\xi) \}.
	\end{equation}
	Then, 
	$$ \|u\|^2_{H^{s}(\mathbb{R}^N)}=  \|u\|^2_{L^2(\mathbb{R}^N)}  +  C\|(-\Delta)^\frac{s}{2} u \|^2_{L^2(\mathbb{R}^N)} .$$
	Next, for $s>0$ we denote 
	\[ 
	(-\Delta)^{-s} f=\mathcal{I}_{2s}[f] =  \frac{1}{C(N,s)}\int_{\RR^N}  \frac{f(y)}{|x-y|^{N-2s}}  \, dy \,, 
	\]
	the Riesz potential, where $C(N,s)= \frac{\pi^{N/2} 2^{2s} \Gamma \left(s\right)}{\Gamma \left(\frac{N-2s}{2}\right)}$  (see \cite[Chapter 5]{Stein}). 
	Moreover,  the following inequality holds true
	\begin{equation}\label{2.18}
		\|\mathcal{I}_{s} (f) \|_{L^{q}(\RR^N)} \lesssim \|f\|_{L^{p}(\RR^N)}  
	\end{equation} 
	for all $f\in L^{p}(\RR^N)$,	with $1<p<\frac{N}{s}$, and $\frac{1}{q}= \frac{1}{p}-\frac{s}{N}$.
	\\
	In particular, if  $s=2$, then $\mathcal{I}_{2}$ is the inverse of the Laplacian operator.
	Note that if $N=s=2$,  then the above integral is a convolution of $f$ and  $ \frac{1}{2\pi} \log |x|$ instead of $C|x-y|^{s-N}$.
	\\
	In addition, it is known that  $\mathcal{R}(f)=\nabla \mathcal{I}_1(f)$  the Riesz transforms, and $\mathcal{R}$ maps $L^{p}(\RR^N)$ into $L^{p}(\RR^N)$ for $1<p<\infty$, see \cite[Chapter 3]{Stein}.
	\\
	Next, we recall a generalized version of the Stroock--Varopoulos inequality by Hung--V\'azquez  \cite{Hung}.  
	\begin{lemma}\label{LemSt-va} Let $\Omega$ be a bounded domain in $\RR^N$.  Let $s\in (0,1)$, and let $\psi, \phi\in \mathcal{C}^1(\mathbb{R})$ be such that $\psi', \phi'\geq 0$. Then,
		\begin{align}\label{2.20}
			\int_{\Omega} \psi(f) 
			(-\Delta)^s  \left[ \phi(f) \right] dx  \geq 0\,.
		\end{align}
		If we take $\psi(f)=f$, then we obtain
		\begin{align}\label{es73}
			\int_{\Omega} f (-\Delta)^s  \left[\phi(f)\right] \, dx \geq \int_{\Omega}  \left|(-\Delta)^\frac{s}{2} \Phi(f)\right|^2 \, dx  \,,
		\end{align}
		where $\phi'=(\Phi')^2$.
	\end{lemma}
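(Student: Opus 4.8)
The plan is to reduce both inequalities \eqref{2.20} and \eqref{es73} to the quadratic (Gagliardo) form of the fractional Laplacian together with two elementary pointwise inequalities, one coming from monotonicity and one from Cauchy--Schwarz. It suffices to treat the case in which the functions $\psi(f),\phi(f),\Phi(f)$ belong to $H^{s}(\RR^N)$ and vanish outside $\Omega$; this is the situation occurring below (the relevant profiles are compactly supported in $\Omega$, or $\psi(0)=\phi(0)=0$ and $f$ has support in $\Omega$), so after extension by zero one may as well argue on all of $\RR^N$. For such functions $g,h\in H^{s}(\RR^N)$, starting from the singular--integral representation \eqref{2.19}, Fubini's theorem and the symmetrization $x\leftrightarrow y$ (which absorbs the principal value) give
\[
\int_{\Omega} g\,(-\Delta)^{s}h\,dx=\frac{C(N,s)}{2}\int_{\RR^N}\int_{\RR^N}\frac{\big(g(x)-g(y)\big)\big(h(x)-h(y)\big)}{|x-y|^{N+2s}}\,dx\,dy .
\]
One first establishes this identity for $g,h\in\mathcal{C}^\infty_c(\RR^N)$ and then passes to the limit using the continuity of both sides on $H^{s}(\RR^N)$.

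For \eqref{2.20}, insert $g=\psi(f)$ and $h=\phi(f)$ into the identity above. For a.e.\ pair $(x,y)$ set $a=f(x)$, $b=f(y)$; since $\psi'\geq 0$ and $\phi'\geq 0$, the increments $\psi(a)-\psi(b)$ and $\phi(a)-\phi(b)$ are simultaneously $\geq 0$ (when $a\geq b$) or simultaneously $\leq 0$ (when $a\leq b$), hence their product is nonnegative. Thus the integrand is pointwise nonnegative and \eqref{2.20} follows.

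For \eqref{es73}, take $\psi=\mathrm{id}$. It remains to prove the pointwise inequality
\[
(a-b)\big(\phi(a)-\phi(b)\big)\ \geq\ \big(\Phi(a)-\Phi(b)\big)^{2},\qquad a,b\in\RR ,
\]
where $\Phi'=\sqrt{\phi'}\geq 0$ (so $\Phi\in\mathcal{C}^1$). Assuming without loss of generality $a\geq b$ and writing $\phi(a)-\phi(b)=\int_{b}^{a}\big(\Phi'(t)\big)^{2}\,dt$ and $\Phi(a)-\Phi(b)=\int_{b}^{a}\Phi'(t)\,dt$, the Cauchy--Schwarz inequality gives $\big(\Phi(a)-\Phi(b)\big)^{2}\leq (a-b)\int_{b}^{a}\big(\Phi'(t)\big)^{2}\,dt=(a-b)\big(\phi(a)-\phi(b)\big)$; the case $a<b$ is identical after swapping $a,b$. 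Plugging this into the bilinear representation (now with $g=f$, $h=\phi(f)$) and using the Parseval/Gagliardo identity obtained from \eqref{2.19},
\[
\big\|(-\Delta)^{\frac{s}{2}}\Phi(f)\big\|_{L^{2}(\RR^N)}^{2}=\frac{C(N,s)}{2}\int_{\RR^N}\int_{\RR^N}\frac{\big(\Phi(f(x))-\Phi(f(y))\big)^{2}}{|x-y|^{N+2s}}\,dx\,dy ,
\]
yields \eqref{es73}.

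The only genuine work is in the approximation arguments: justifying Fubini on the singular kernel and the passage from smooth compactly supported functions to $f$, which requires $\psi(f),\phi(f),\Phi(f)\in H^{s}(\RR^N)$. This follows because $\psi,\phi,\Phi$ are Lipschitz on the (bounded) range of $f$ together with the support hypothesis, and one may additionally truncate $\psi,\phi$ and mollify $f$, removing the regularizations at the end via monotone/dominated convergence (the pointwise inequalities above being stable under these limits). I expect this step to be the main—though routine—technical point; the algebraic heart of the lemma is the two one-variable inequalities.
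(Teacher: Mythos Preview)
The paper does not prove this lemma: it is \emph{recalled} from Nguyen--V\'azquez \cite{Hung} (see the sentence immediately preceding the statement and Remark~\ref{Rem2.3}), so there is no in-paper argument to compare against. Your proposal supplies exactly the standard proof one finds in that reference and elsewhere: symmetrize the singular integral into the Gagliardo bilinear form, then use (i) that increments of two nondecreasing functions of the same variable have the same sign for \eqref{2.20}, and (ii) the Cauchy--Schwarz inequality $\bigl(\int_b^a \Phi'\bigr)^2\le (a-b)\int_b^a(\Phi')^2$ for \eqref{es73}. This is correct.

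Two small points worth tightening. First, the lemma is phrased on a bounded domain $\Omega$, and in the paper it is always applied to functions $u$ extended by zero outside $\Omega$ (so $(-\Delta)^s$ is the restricted/integral fractional Laplacian on $\RR^N$); your extension-by-zero reduction is therefore the intended reading, but you should say explicitly that $\int_\Omega \psi(f)(-\Delta)^s[\phi(f)]\,dx=\int_{\RR^N}\psi(f)(-\Delta)^s[\phi(f)]\,dx$ under that support assumption, since otherwise the bilinear identity you quote does not match the left-hand side. Second, the approximation step is indeed routine here because in every application $f$ is bounded with $\mathrm{supp}\,f\subset\Omega$ and $\psi(0)=\phi(0)=\Phi(0)=0$ (e.g.\ $\phi(r)=|r|^{m+q-2}r$), so $\psi(f),\phi(f),\Phi(f)\in H^{s}(\RR^N)$ via the Lipschitz bound on the range of $f$; mentioning this would close the loop with how the lemma is actually used in \eqref{3.14}--\eqref{3.14a}.
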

	\begin{remark}\label{Rem2.3}
		A version of this lemma for $\RR^N$ was obtained by the authors in \cite{Anh1, StTsvz17}.
	\end{remark}
	Finally, we recall the interpolation inequality. 
	\begin{proposition}\label{Pro8}
		Let $1\leq  r,q,p\leq \infty$ be such that
		\[\frac{1}{r}  =\frac{\theta}{p} + \frac{1-\theta}{q} , \quad  \theta\in(0,1)\,,\]
		and let $u\in L^q(\RR^N)  \cap L^p(\RR^N)$. Then, we have $u\in L^r(\RR^N)$, and 
		\[\|u\|_{L^r(\RR^N)}  \leq  \|u\|^\theta_{L^p(\RR^N)}  \|u\|^{1-\theta}_{L^q(\RR^N)} \,.\]
		In particular, 	if  $u\in X$, then  for any $1<r<\infty$,  there holds true
		\begin{align*} 
			\|u\|_{L^r(\RR^N)} \leq \|u\|^\frac{1}{r}_{L^1(\RR^N)}   \|u\|^{1-\frac{1}{r}}_{L^\infty(\RR^N)}  \leq  \|u\|_{X} \,. 
		\end{align*}
	\end{proposition}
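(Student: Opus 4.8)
The plan is to derive the inequality as an immediate consequence of Hölder's inequality applied to a suitable splitting of $|u|^r$, and then to dispatch the endpoint cases by hand. First suppose $1\le p,q<\infty$ and $r<\infty$. Multiplying the exponent relation through by $r$ turns it into $\frac{\theta r}{p}+\frac{(1-\theta)r}{q}=1$, so the pair $\bigl(\tfrac{p}{\theta r},\tfrac{q}{(1-\theta)r}\bigr)$ is a Hölder-conjugate pair of exponents in $[1,\infty]$ (both are $\ge 1$ precisely because $\frac1r\ge\frac\theta p$ and $\frac1r\ge\frac{1-\theta}q$). Applying Hölder's inequality to $\int_{\RR^N}|u|^{\theta r}\,|u|^{(1-\theta)r}\,dx$ with these exponents gives
\[
\int_{\RR^N}|u|^{r}\,dx\;\le\;\Bigl(\int_{\RR^N}|u|^{p}\,dx\Bigr)^{\frac{\theta r}{p}}\Bigl(\int_{\RR^N}|u|^{q}\,dx\Bigr)^{\frac{(1-\theta)r}{q}},
\]
which in particular shows $u\in L^r(\RR^N)$; raising both sides to the power $1/r$ yields $\|u\|_{L^r}\le\|u\|_{L^p}^{\theta}\|u\|_{L^q}^{1-\theta}$.

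Next I would treat the case $p=\infty$ (the case $q=\infty$ being symmetric, and the degenerate case $r=\infty$ forcing $p=q=\infty$, for which the statement is trivial). Here the exponent relation forces $r=\tfrac{q}{1-\theta}\ge q$, and from $|u|^{r}=|u|^{\,r-q}\,|u|^{q}\le\|u\|_{L^\infty}^{\,r-q}\,|u|^{q}$ one gets, after integrating over $\RR^N$ and taking the $1/r$ power, $\|u\|_{L^r}\le\|u\|_{L^\infty}^{\,1-q/r}\|u\|_{L^q}^{\,q/r}$; since $q/r=1-\theta$ this is exactly the claimed bound.

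For the ``in particular'' assertion I would specialize to $p=\infty$, $q=1$, and $\theta=1-\frac1r$, so that the constraint $\frac1r=\frac{1-\theta}{1}$ holds with $\theta\in(0,1)$ whenever $1<r<\infty$; the general inequality then reads $\|u\|_{L^r}\le\|u\|_{L^\infty}^{1-1/r}\|u\|_{L^1}^{1/r}$. The final comparison $\|u\|_{L^\infty}^{1-1/r}\|u\|_{L^1}^{1/r}\le\|u\|_{X}$ follows from the weighted arithmetic--geometric mean inequality $a^{\lambda}b^{1-\lambda}\le\lambda a+(1-\lambda)b\le a+b$ applied with $a=\|u\|_{L^1}$, $b=\|u\|_{L^\infty}$, $\lambda=\frac1r$. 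There is no substantive obstacle here: this is the classical Lyapunov (log-convexity) inequality for $L^p$-norms, and the only point requiring attention is the bookkeeping with the endpoint exponents $p,q\in\{1,\infty\}$ and $r=\infty$.
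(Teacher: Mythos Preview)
Your argument is correct and is precisely the classical proof of the Lyapunov interpolation inequality: split $|u|^r=|u|^{\theta r}|u|^{(1-\theta)r}$ and apply H\"older, handle the endpoint $p=\infty$ (or $q=\infty$) directly via the pointwise bound $|u|^{r-q}\le\|u\|_{L^\infty}^{r-q}$, then specialize and finish with the weighted AM--GM inequality. There is nothing to correct.

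As for comparison with the paper: the paper does not give a proof at all. Proposition~\ref{Pro8} is merely \emph{recalled} at the end of the preliminaries section as a standard fact, with no argument supplied. So your write-up is not so much a different route as a proof where the authors chose to give none; it is exactly the textbook argument one would expect here.
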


	\section{Regularizing equations}
	For any $\varepsilon>0$, we study a regularizing  equation to Eq \eqref{1}:  
	\begin{align}\label{P_epsilon}
		\left\{
		\begin{array}
			{ll}%
			\partial^\alpha_t u  -\varepsilon \Delta u  =\operatorname{div} \Theta(u)    &\text{in }  Q_T\,  ,
			\\
			u(x,0)=u_0(x) &\text{in }
			\RR^N \, .
			\\
		\end{array}
		\right.
	\end{align}
	Then, we look for the existence of mild solutions of Eq \eqref{P_epsilon} as a fixed point of the map 
	\[ 
	\mathcal{T} (u) (t) = Z_\varepsilon(t) * u_0 +  \int^t_0 \nabla Y_\varepsilon(t-\tau)  * \Theta (u)(\tau)  \, d\tau\,, 
	\]
	where $Z_\varepsilon, Y_\varepsilon$ are the fundamental solutions to Eq \eqref{1.5a}  with diffusion $\varepsilon \Delta  u$, modified from $Z, Y$  in  \eqref{1.6}.
	\\
	In fact, we show that  $\mathcal{T}$ is a  contraction mapping from $L^\infty\big(0,T; X\big)\to L^\infty\big(0,T; X\big)$ for some $T>0$.
	Then, we have the following theorem.
	\begin{theorem}\label{Themildsol}
		Let $\frac{1}{2}\leq s<1$,  $\alpha\in(0,1)$  $m\geq 1$, and   $u_0 \in X$.  For any $\varepsilon>0$, then Eq \eqref{P_epsilon} has a unique mild solution $u_\varepsilon \in L^\infty\big(0,T; X\big)$ for some $T>0$.
	\end{theorem}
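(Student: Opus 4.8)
The plan is to realise $u_\varepsilon$ as the unique fixed point of the Duhamel map $\mathcal{T}$ on the complete metric space
\[
E_{T,R}=\Bigl\{u\in L^\infty(0,T;X):\ \sup_{0<t<T}\|u(t)\|_X\le R\Bigr\},
\]
equipped with the metric induced by the $L^\infty(0,T;X)$-norm, the radius $R$ and the existence time $T$ being fixed only at the very end. First I would record the $\varepsilon$-rescaled kernel bounds. Since $\widehat{Z_\varepsilon}(\xi,t)=E_{\alpha,1}(-\varepsilon t^\alpha|\xi|^2)$ one has $Z_\varepsilon(x,t)=Z(x,\varepsilon^{1/\alpha}t)$, so $Z_\varepsilon(\cdot,t)\ge 0$ has total mass $1$ and $\|Z_\varepsilon(t)\|_{L^1(\RR^N)}=1$; similarly $\nabla Y_\varepsilon(x,t)=\varepsilon^{(1-\alpha)/\alpha}\nabla Y(x,\varepsilon^{1/\alpha}t)$, so the rescaled form of \eqref{1.10c} yields
\[
\|\nabla Y_\varepsilon(t)\|_{L^1(\RR^N)}\le C_\varepsilon\,t^{\frac{\alpha}{2}-1},
\]
and, for $1<r'<\frac{N}{N-1}$,
\[
\|\nabla Y_\varepsilon(t)\|_{L^{r'}(\RR^N)}\le C_{\varepsilon,r'}\,t^{\frac{\alpha}{2}-1-\frac{\alpha N}{2}\left(1-\frac{1}{r'}\right)},
\]
with both exponents lying in $(-1,0)$; the restriction $r'<\frac{N}{N-1}$ is exactly what keeps the second exponent above $-1$, and for $N>3$ it is compatible with $r'<\kappa_3(N)$ in \eqref{1.10c}.

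Next I would prove the nonlinear estimates on $\Theta(u)=|u|^m\nabla(-\Delta)^{-s}u$. This is where the hypothesis $s\ge\frac12$ enters: it permits the factorisation $\nabla(-\Delta)^{-s}=\mathcal{R}\circ\mathcal{I}_{2s-1}$, a Riesz transform $\mathcal{R}$ (bounded on $L^p(\RR^N)$ for $1<p<\infty$) composed with a Riesz potential $\mathcal{I}_{2s-1}$ of nonnegative order --- the identity when $s=\frac12$ --- whose mapping property is \eqref{2.18}. Combining these with H\"older's inequality, $\||u|^m\|_{L^a(\RR^N)}=\|u\|_{L^{ma}(\RR^N)}^m$, and $\|u\|_{L^p(\RR^N)}\le\|u\|_X$ for all $p\in[1,\infty]$, one obtains, for every $r\in[1,\infty)$ and a suitable choice of intermediate exponents strictly between $1$ and $\infty$ (needed to avoid the endpoints excluded from \eqref{2.18} and from the $L^p$-boundedness of $\mathcal{R}$),
\[
\|\Theta(u)\|_{L^r(\RR^N)}\le C_r\,\|u\|_X^{m+1},
\]
and, using in addition $\bigl||u|^m-|v|^m\bigr|\le m\bigl(|u|+|v|\bigr)^{m-1}|u-v|$ together with the splitting $\Theta(u)-\Theta(v)=(|u|^m-|v|^m)\nabla(-\Delta)^{-s}u+|v|^m\nabla(-\Delta)^{-s}(u-v)$, the local Lipschitz bound
\[
\|\Theta(u)-\Theta(v)\|_{L^r(\RR^N)}\le C_r\bigl(\|u\|_X+\|v\|_X\bigr)^m\|u-v\|_X .
\]

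Then I would assemble the fixed-point argument. By Young's inequality $\|Z_\varepsilon(t)*u_0\|_X\le\|Z_\varepsilon(t)\|_{L^1(\RR^N)}\|u_0\|_X=\|u_0\|_X$; for the Duhamel term, pairing $\nabla Y_\varepsilon(t-\tau)\in L^1$ with $\Theta(u)(\tau)\in L^1$ controls the $L^1$-norm, while pairing $\nabla Y_\varepsilon(t-\tau)\in L^{r'}$ (with $1<r'<\frac{N}{N-1}$ fixed) with $\Theta(u)(\tau)\in L^{r'/(r'-1)}$ controls the $L^\infty$-norm, and in both cases the $t$-weight inherited from $\nabla Y_\varepsilon$ is weakly singular and locally integrable, so the time convolution is well defined. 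Invoking the two displays above, one gets for $u,v\in E_{T,R}$
\[
\sup_{0<t<T}\|\mathcal{T}(u)(t)\|_X\le\|u_0\|_X+C_\varepsilon'\,T^{\gamma}R^{m+1},
\]
\[
\sup_{0<t<T}\|\mathcal{T}(u)(t)-\mathcal{T}(v)(t)\|_X\le C_\varepsilon'\,T^{\gamma}(2R)^m\|u-v\|_{L^\infty(0,T;X)},
\]
for some $\gamma=\gamma(\alpha,N,s)>0$. Taking $R=2\|u_0\|_X$ and then $T=T(\varepsilon,\|u_0\|_X,\alpha,s,N,m)>0$ so small that $C_\varepsilon'T^\gamma R^{m+1}\le R/2$ and $C_\varepsilon'T^\gamma(2R)^m<1$ makes $\mathcal{T}$ a contraction of $E_{T,R}$ into itself, and the Banach fixed point theorem produces a unique $u_\varepsilon\in E_{T,R}$ with $\mathcal{T}(u_\varepsilon)=u_\varepsilon$, i.e., the desired mild solution of \eqref{P_epsilon} on $(0,T)$; uniqueness in the whole of $L^\infty(0,T;X)$ then follows from the same Lipschitz estimate combined with a generalised Gr\"onwall inequality for weakly singular kernels.

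The step I expect to be the main obstacle is the nonlinear bound on $\Theta$ in the scale $L^1\cap L^\infty$: one must interpolate through $X$ to bypass the forbidden endpoints $p=1$ and $p=\infty$ in \eqref{2.18} and in the $L^p$-theory of $\mathcal{R}$, and one must tune the Young exponents in the $L^\infty$-estimate so that the temporal weight coming from $\nabla Y_\varepsilon$ stays integrable at $\tau=t$; it is precisely the assumption $s\ge\frac12$ that renders $\nabla(-\Delta)^{-s}$ a Riesz transform composed with a nonnegative-order Riesz potential and so lets all these estimates close.
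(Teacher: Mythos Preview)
Your proposal is correct and follows essentially the same route as the paper: the paper also proves Theorem~\ref{Themildsol} by showing that $\mathcal{T}$ is a contraction on the closed ball of radius $M_0=2\|u_0\|_X$ in $L^\infty(0,T_0;X)$, using exactly your splitting $\Theta(u)-\Theta(v)=(|u|^m-|v|^m)\,\mathcal{R}\mathcal{I}_{2s-1}[u]+|v|^m\,\mathcal{R}\mathcal{I}_{2s-1}[u-v]$, the kernel bound \eqref{1.10c} with $r=1$ for the $L^1$-estimate and with $r=q/(q-1)\in(1,N/(N-1))$ (equivalently $q>N$) for the $L^\infty$-estimate, and the fact that $Z_\varepsilon$ is a probability density. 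The only addition in your sketch is the Gr\"onwall step extending uniqueness from the ball to all of $L^\infty(0,T;X)$, which the paper uses later but leaves implicit in the proof itself.
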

	\begin{proof}[Proof of Theorem \ref{Themildsol}] The proof follows by way of the following lemma. 
		\begin{lemma}\label{le6} Assume hypotheses as in Theorem \ref{Themildsol}. Then, there exists a time $T_0\in (0,1)$, and a real number $\gamma>0$ such that   
			\begin{align}\label{3.112}
				\big\|\mathcal{T}(u)-\mathcal{T}(v)\big\|_{L^\infty\left(0,T_0; X\right)} \leq C M_0^{m}  T_0^{\gamma}\big\|u-v\big\|_{L^\infty\left(0,T_0; X\right)}  \,, \quad\forall  u,v\in  \overline{B(0,M_0)}\,, 
			\end{align}
			where $\overline{B(0,M_0)}$ is the closed ball in $L^\infty \left(0,T_0; X\right)$  with center at $0$, and  radius $M_0$; and  constant $C>0$ only depends on the parameters involved  such as $\varepsilon, N, m, \alpha, s$.
		\end{lemma}
		\begin{proof}[Proof of Lemma \ref{le6}]  Let us drop the dependence on $\varepsilon$ of $Z_\varepsilon$, and  $Y_\varepsilon$ for short. The reader should keep in mind that the estimates of $Y_\varepsilon(x,t)$ below depend on $\varepsilon$.
			\\
			By \eqref{1.10c}, applying Young's inequality yields
			\begin{align}\label{3.114}
				\big\|\mathcal{T}(u)-\mathcal{T}(v)  \big\|_{L^p(\RR^N)} &=  \left\| \int_{0}^{t} \nabla Y (t-\tau) * \big[\Theta(u)(\tau)-\Theta(v)(\tau) \big] \, d\tau \right\|_{L^p(\RR^N)} \nonumber
				\\
				& \leq  \int_{0}^{t}\big\|\nabla Y (t-\tau) \big\|_{L^r(\RR^N)}  \big\|\Theta(u)(\tau)-\Theta(v)(\tau)\big\|_{L^q(\RR^N)}  \, d\tau  \nonumber
				\\
				&\leq C \int_{0}^{t}(t-\tau)^{\frac{\alpha}{2}-1-\frac{\alpha N}{2}\left(1-\frac{1}{r}\right)}  \big\|\Theta(u)(\tau)-\Theta(v) (\tau) \big\|_{L^{q}(\RR^N)} \,
				d\tau\,,
			\end{align}
			with \, $\frac{1}{p} +1 =\frac{1}{r} +\frac{1}{q}$, 
			$r\in \big[1,\kappa_3(N)\big)$,  and $C=C(\varepsilon, N, \alpha, r)>0$. 
			\\
			It is of course that we will consider $p=1$ and $p=\infty$ alternatively in \eqref{3.114}, and  $r\in \big[1,\kappa_3(N)\big)$ will be chosen with respect to $p$. 
			\\
			At the moment, we want to estimate \eqref{3.114}. Observe that
			\begin{align*}
				\Theta(u)-\Theta(v)  &=(|u|^{m}-|v|^{m}) \nabla (-\Delta)^{-s}[u] +|v|^{m} \nabla (-\Delta)^{-s}[u-v]
				\\
				&=(|u|^{m}-|v|^{m}) \mathcal{R}\mathcal{I}_{2s-1}[u]+|v|^{m}\mathcal{R}\mathcal{I}_{2s-1}[u-v]  \,.
			\end{align*}
			By H\"older's inequality and the fact that $\mathcal{R}$ is bounded on $L^p(\RR^N)$, we obtain
			\begin{align*}
				&\big\|\Theta(u)(t)-\Theta(v) (t)\big\|_{L^{1}(\RR^N)}\nonumber 
				\\
				&\leq \big\|\big(|u|^{m}-|v|^{m}\big)(t)\big\|_{L^{2}(\RR^N)} \big\|\mathcal{R}\mathcal{I}_{2s-1}[u] (t)\big\|_{L^{2}(\RR^N)} +\big\||v|^{m}(t)\big\|_{L^{2}(\RR^N)} \big\|\mathcal{R}\mathcal{I}_{2s-1}[u-v](t)\big\|_{L^{2}(\RR^N)}
				\\
				&\lesssim \big\|\big(|u|^{m}-|v|^{m}\big)(t)\big\|_{L^{2}(\RR^N)} \big\|\mathcal{I}_{2s-1}[u] (t)\big\|_{L^{2}(\RR^N)} +\big\||v|^{m}(t)\big\|_{L^{2}(\RR^N)} \big\|\mathcal{I}_{2s-1}[u-v](t)\big\|_{L^{2}(\RR^N)}\,.
			\end{align*}
			With the last inequality noted, it follows from the Riesz potential estimate in \eqref{2.18}, and the interpolation inequality in Proposition \ref{Pro8} that
			\begin{align}\label{3.50}
				&\big\|\Theta(u)(t)-\Theta(v) (t)\big\|_{L^{1}(\RR^N)}\nonumber 
				\\
				&\lesssim  M_0^{m-1}\|(u-v)(t)\|_{L^{2}(\RR^N)}\left\|u(t)\right\|_{L^{\frac{2N}{N+2(2s-1)}}(\RR^N)}
				+ \|v(t)\|^m_{L^{2m}(\RR^N)}\left\|(u-v)(t)\right\|_{L^{\frac{2N}{N+2(2s-1)}}(\RR^N)} \nonumber
				\\
				&\lesssim M_0^{m}\|(u-v)(t)\|_{X}\,,\quad \forall u, v \in \overline{B(0,M_0)} \,.
			\end{align}
			Note that $\frac{2N}{N+2(2s-1)}>1$ since $N\geq 2$, and $1/2\leq s<1$.
			\\
			By choosing  $p=r=q=1$ in \eqref{3.114},  we obtain
			\begin{align}\label{3.777}
				\big\|\mathcal{T}(u)(t)-\mathcal{T}(v) (t)\big\|_{L^{1}(\RR^N)} & \leq C  M_0^{m} \int_{0}^{t}(t-\tau)^{-1+\frac{\alpha}{2}}\big\|(u-v)(\tau)\big\|_{X} \, d\tau    \nonumber
				\\
				& \leq C M_0^{m} T_0^{\frac{\alpha}{2}}\|u-v\|_{L^\infty\left(0, T_0; X\right)}\,,
			\end{align}
			for any $t \in(0, T_0)$, with $C=C(\varepsilon, N, \alpha,m, s)$.
			\\
			Next,  we set $q_s=\frac{Nq}{N+q(2s-1)}$. Note that $q_s>1 \Leftrightarrow q> \frac{N}{N+1-2s}$. 
			\\
			By a similar argument as in the proof of \eqref{3.50}, we have
			\begin{align}\label{3.51}
				&\big\|\Theta(u)(t)-\Theta(v)(t)\big\|_{L^{q}(\RR^N)} \nonumber\\
				&\leq \big\| \left(|u|^{m}-|v|^{m}\right)(t) \big\|_{L^{\infty}(\RR^N)}\big\|\mathcal{I}_{2s-1} [u](t)\big\|_{L^{q}(\RR^N)} +  \big\| |v|^{m}(t) \big\|_{L^{\infty}(\RR^N)}\big\|\mathcal{I}_{2s-1} [u-v](t)\big\|_{L^{q}(\RR^N)}\nonumber
				\\
				&\lesssim M_0^{m-1}\big\|(u-v)(t)\big\|_{L^{\infty}(\RR^N)} \big\|u(t)\big\|_{L^{q_s}(\RR^N)} + M_0^{m}\big\|(u-v)(t)\big\|_{L^{q_s}(\RR^N)}  \nonumber
				\\
				&\lesssim M_0^{m-1}\left(\|(u-v)(t)\|_{X}\|u(t)\|_{X}+\|v(t)\|_{X}\|(u-v)(t)\|_{X}\right) \nonumber
				\\
				&\leq C M_0^{m}\|(u-v)(t)\|_{X} \,.  
			\end{align}
			Pick $p=\infty$, and $q>N>\frac{N}{N+1-2s}$ in \eqref{3.114}.  Then,  observe that  $r=\frac{q}{q-1} 
			\in \big[1, \frac{N}{N-1}\big) \subset \big[1,  \kappa_3(N)\big)$. 
			\\
			Inserting \eqref{3.51} into \eqref{3.114} yields 
			\begin{align}\label{3.1000}
				\big\|\mathcal{T}(u)(t) -\mathcal{T}(v)(t)\big\|_{L^{\infty}(\RR^N)} &\leq  C M_0^{m} \int^t_0(t-\tau)^{\frac{\alpha}{2}-1-\frac{\alpha N}{2}\left(1-\frac{1}{r}\right)  }\|(u-v)(\tau)\|_{X}  \,d\tau  \nonumber  
				\\
				&\leq C M_0^{m}  T_0^{\frac{\alpha}{2}-\frac{\alpha N}{2q}}\|u-v\|_{L^\infty\left(0, T_0;X\right)}   \,,
			\end{align}
			for $t\in(0,T_0)$. 
			\\
			From \eqref{3.777} and \eqref{3.1000}, we obtain \eqref{3.112} with $\gamma=\frac{\alpha}{2}-\frac{\alpha N}{2q}$ for  $q>N$.   
			\\
			Therefore, we complete the proof of Lemma \ref{le6}.
		\end{proof}
		
		Now, it is enough to finish the proof of Theorem \ref{Themildsol}. By choosing $T_0>0$ small enough such that 
		\begin{equation}\label{3.222}
			C M_0^{m}  T_0^{\gamma}<1/2  \,,
		\end{equation} 
		then it is clear that $\mathcal{T}$ is a contraction mapping on $L^\infty\left(0,T_0; X\right)$.
		It remains to show that $\mathcal{T}$ maps $\overline{B(0,M_0)}$ to $\overline{B(0,M_0)}$, with 
		$M_0=2 \|u_0\|_{X}$. 
		\\
		Indeed, let us take $v = 0$ in \eqref{3.112}. Then,  for $u\in \overline{B(0,M_0)}$,  and by \eqref{3.222}, we have
		\begin{align}\label{3.11111}
			\|\mathcal{T}(u)\|_{L^\infty\left(0, T_0; X\right)}  &\leq \|\mathcal{T}(0)\|_{L^\infty\left(0, T_0; X\right)}+ C M_0^{m} T_0^\gamma\|u\|_{L^\infty\left(0, T_0; X\right)}  \nonumber
			\\
			&\leq \|\mathcal{T}(0)\|_{L^\infty\left(0, T_0; X\right)}+ \frac{1}{2} M_0\,.    
		\end{align}
		On the other hand, it is known that $Z(x,t)$ is a   probability density function. That is 
		\[
		\|Z(t)\|_{L^1(\RR^N)}=1 \,, \quad \forall  t>0\,.
		\]
		Thus, it follows from Young's inequality that 
		\begin{align*}
			\|\mathcal{T}(0)(t)\|_{L^1(\RR^N)} =   \|Z*u_0(t)\|_{L^1(\RR^N)} \leq \|Z(t)\|_{L^1(\RR^N)} \|u_0\|_{L^1(\RR^N)}   = \|u_0\|_{L^1(\RR^N)}  \,, \end{align*}
		and
		\begin{align*}
			\|\mathcal{T}(0)(t)\|_{L^\infty(\RR^N)}\leq  \|Z(t)\|_{L^1(\RR^N)} \|u_0\|_{L^\infty(\RR^N)}= \|u_0\|_{L^\infty(\RR^N)}\, \quad \text{for }  t>0\,.
		\end{align*}
		Hence,
		\begin{align}\label{3.52}
			\|\mathcal{T}(0)\|_{L^\infty(0,T_0;X)}\leq \|u_0\|_{X} \leq \frac{M_0}{2}\,.    
		\end{align}
		Inserting \eqref{3.52} into \eqref{3.11111} yields the conclusion. 
		\\
		This puts an end to proof of Theorem \ref{Themildsol}.
	\end{proof}
	
	Since the regularity of mild solutions of Eq \eqref{P_epsilon} is not sufficient for us to prove some a priori estimates,  then we have to study weak solutions of the homogeneous Dirichlet boundary condition corresponding to Eq \eqref{P_epsilon}:
	\begin{align}\label{Prob-R}
		\left\{\begin{array}{ll}%
			\partial^\alpha_t \tilde{u}  -\varepsilon \Delta \tilde{u}  =\operatorname{div}  \big( \Theta(u_\varepsilon) \mathbf{1}_{\Omega} \big)  & \text{ in }  \Omega \times(0,T)\,,
			\\
			\tilde{u}=0  \, &\text{ on }\partial \Omega \,,
			\\
			\tilde{u}(x,0) =u_0(x)  &\text{ in } \Omega\,,
		\end{array}
		\right.
	\end{align}
	where  $\Omega$ is a bounded domain in $\RR^N$, and  $\mathbf{1}_{\Omega}$ is denoted by the characteristic function on $\Omega$, and $u_\varepsilon$ is the unique mild solution of Eq \eqref{P_epsilon} corresponding to  $\operatorname{div}\big(\Theta(u) \mathbf{1}_{\Omega} \big)$ with initial data $u_0 \mathbf{1}_\Omega$  in Theorem \ref{Themildsol}. 
	\\
	Our strategy is as follows. First of all, for a given $\varepsilon>0$,  we prove an existence and uniqueness of weak bounded solution $\tilde{u}_\varepsilon$ to Eq \eqref{Prob-R}  in $\Omega\times(0,T_1)$, for some $T_1>0$. Next, we extend $\tilde{u}_\varepsilon(x,t)$ by zero on $  \big(\RR^N  \setminus \Omega\big) \times (0,T_1)$ (not labelled). Obviously,  we have $\tilde{u}_\varepsilon \in L^\infty\left( 0,T_1; X\right)$. Moreover, it is not difficult to verify that $\tilde{u}_{\varepsilon}$ is also a mild solution to  Eq \eqref{P_epsilon}. By the uniqueness result of mild solutions in class $L^\infty\big(0,T; X\big)$, we have that $\tilde{u}_\varepsilon= u_\varepsilon$ for a.e. $(x,t)\in Q_T$, where $T=\min\{T_0,T_1\}$. As a result, we can consider $u_\varepsilon$ as the unique weak solution of Eq \eqref{Prob-R}. This point of view enables us to pick some suitable test functions of $u_\varepsilon$-variable in order to establish some a priori estimates to $u_\varepsilon$. 
	\\
	
	Fix $\varepsilon>0$. For brief, we drop the dependence on $\varepsilon$ of $u_{\varepsilon}$, and $\tilde{u}_{\varepsilon}$.  By applying Theorem \ref{Themildsol} to $\operatorname{div} \big(\Theta(u) \mathbf{1}_{\Omega}\big)$ with initial data $u_0\mathbf{1}_{\Omega}(x)$, we find that there exists a unique mild solution $u\in L^\infty(0, T_0;X)$ to  Eq \eqref{P_epsilon}.
	\\
	Next, we denote
	\begin{align*}
		\mathcal{W}_{\Omega,T}=\big\{ u\in  L^2\big(0,T; H^1_0(\Omega)\big): g_{1-\alpha}*u\in C\big([0,T] ;L^2(\Omega)\big), (g_{1-\alpha}*u)|_{t=0}=0\big\} \,.   
	\end{align*} 
	We say that $\tilde{u}$ is a weak solution of Eq \eqref{Prob-R} if $\tilde{u}\in \mathcal{W}_{\Omega,T}$, and there holds true
	\begin{equation}\label{3.51a}
		\int^T_0 \int_{\Omega} \big(\varphi_t \big[g_{1-\alpha}*(\tilde{u}-u_0)\big]  -\varepsilon\nabla \tilde{u} \cdot \nabla \varphi \big) \,dx dt      =  \int^T_0 \int_{\Omega} \Theta(u) \cdot \nabla \varphi \, dx dt\, ,
	\end{equation}
	for all $\varphi\in L^2\big(0,T; H^1_0(\Omega)\big) \cap H^1\big(0,T; L^2 (\Omega)\big)$, and $\varphi(T)=0$.
	\\
	Now, we prove the existence and uniqueness of weak solutions to Eq \eqref{Prob-R}. The result is contained in the following theorem.
	\begin{theorem}\label{The-P_R} Assume hypotheses as in Theorem \ref{Themildsol}.  Then, Eq \eqref{Prob-R} has a unique bounded weak solution $\tilde{u}_{\varepsilon}\in \mathcal{W}_{\Omega, T}$, for some $T>0$. Moreover, for any $q\in[1,\infty]$, we have 
		\begin{align}\label{3.53}
			\|\tilde{u}_{\varepsilon}(t)\|_{L^q(\Omega)} \leq \|u_0\|_{L^q (\Omega)} \,,  \quad \text{for  } \, t\in (0,T)\,,
		\end{align}  
		and
		\begin{align}\label{3.54}
			\varepsilon\big\|\nabla \tilde{u}_{\varepsilon}\big\|^2_{L^2(\Omega\times (0,T))} \leq C\|u_0\|^2_{L^2(\Omega)} \,, 
		\end{align}  
		where $C=C(\alpha, T)$.
	\end{theorem}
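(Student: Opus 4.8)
The plan is to regard \eqref{Prob-R} as a \emph{linear} time-fractional diffusion equation: since $u_\varepsilon$ is the fixed mild solution furnished by Theorem \ref{Themildsol} (applied with the nonlinearity $\operatorname{div}(\Theta(\cdot)\mathbf 1_\Omega)$ and datum $u_0\mathbf 1_\Omega$), the term $\operatorname{div}(\Theta(u_\varepsilon)\mathbf 1_\Omega)$ is a prescribed source. First I would record that $\Theta(u_\varepsilon)\mathbf 1_\Omega\in L^\infty\big(0,T;L^2(\Omega)\big)$: writing $\Theta(u_\varepsilon)=|u_\varepsilon|^m\mathcal R\mathcal I_{2s-1}[u_\varepsilon]$ and using $u_\varepsilon\in L^\infty(0,T;X)$, the boundedness of $\mathcal R$ on $L^2(\RR^N)$, and the Riesz potential estimate \eqref{2.18} (precisely the computation already done in \eqref{3.50}--\eqref{3.51}, which uses $\tfrac12\le s<1$ and $N\ge 2$). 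Hence $f:=\operatorname{div}(\Theta(u_\varepsilon)\mathbf 1_\Omega)\in L^2\big(0,T;H^{-1}(\Omega)\big)$, and \eqref{Prob-R} falls under the abstract framework \eqref{1.4} with $\mathcal V=H^1_0(\Omega)$, $\mathcal H=L^2(\Omega)$, and the coercive bilinear form $a(u,v)=\varepsilon\int_\Omega\nabla u\cdot\nabla v\,dx$. Existence of a solution $\tilde u_\varepsilon$ then follows from the linear theory: a Galerkin scheme in the eigenbasis of $-\Delta$ on $\Omega$, with the fractional-in-time term handled by the kernel regularization $g_{1-\alpha,n}=g_{1-\alpha}*h_{\alpha,n}\in W^{1,1}(0,T)$ of Section \ref{Sec2} and the convergences \eqref{2.5}; the a priori bounds obtained below then place $\tilde u_\varepsilon$ in $\mathcal W_{\Omega,T}$. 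Uniqueness is standard: if $w$ is the difference of two solutions, then $w\in\mathcal W_{\Omega,T}$ solves the homogeneous problem, and testing with $w$ (after replacing $g_{1-\alpha}$ by $g_{1-\alpha,n}$), using Lemma \ref{Lem-Iden1} for the convex $G(\xi)=\xi^2/2$ — whose two remainder terms are nonnegative because $g_{1-\alpha,n}$ is nonincreasing — together with the coercivity of $-\varepsilon\Delta$, forces $g_{1-\alpha}*\|w(\cdot)\|_{L^2(\Omega)}^2\le0$, hence $w\equiv0$.

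The decisive structural step is the identification $\tilde u_\varepsilon=u_\varepsilon$. Extending $\tilde u_\varepsilon$ by zero outside $\Omega$ and expressing it via the fundamental solutions $Z_\varepsilon,Y_\varepsilon$ (Duhamel's formula for the equation with diffusion $\varepsilon\Delta$) shows that this extension is a mild solution of \eqref{P_epsilon} with nonlinearity $\operatorname{div}(\Theta(u_\varepsilon)\mathbf 1_\Omega)$ and datum $u_0\mathbf 1_\Omega$; the uniqueness part of Theorem \ref{Themildsol} then gives $\tilde u_\varepsilon=u_\varepsilon$ a.e. In particular $\tilde u_\varepsilon\in L^\infty(0,T;X)$, so the solution is bounded, and — more importantly — the source becomes $\operatorname{div}\Theta(\tilde u_\varepsilon)$ with $\tilde u_\varepsilon$ vanishing outside $\Omega$, so that it acquires a sign when tested against monotone functions of $\tilde u_\varepsilon$.

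For \eqref{3.53} with $1<q<\infty$ I would work with $g_{1-\alpha,n}$ and test with $\varphi=|\tilde u_\varepsilon|^{q-2}\tilde u_\varepsilon\in H^1_0(\Omega)$, applying Lemma \ref{Lem-Iden1} to the convex $G(\xi)=\tfrac1q|\xi|^q$: the main term gives $\tfrac1q\frac{d}{dt}\big(g_{1-\alpha,n}*(\|\tilde u_\varepsilon\|_{L^q}^q-\|u_0\|_{L^q}^q)\big)$ and the two remainder terms are nonnegative. The viscosity term is $-\varepsilon\int_\Omega\Delta\tilde u_\varepsilon\,|\tilde u_\varepsilon|^{q-2}\tilde u_\varepsilon\,dx=\varepsilon(q-1)\int_\Omega|\tilde u_\varepsilon|^{q-2}|\nabla\tilde u_\varepsilon|^2\,dx\ge0$. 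For the nonlinear term, integrating by parts and using $(q-1)|v|^{m+q-2}\nabla v=\tfrac{q-1}{m+q-1}\nabla\big(|v|^{m+q-1}\operatorname{sgn}v\big)$ and $\nabla\cdot\nabla(-\Delta)^{-s}=(-\Delta)^{1-s}$ reduces it to $-\tfrac{q-1}{m+q-1}\int_\Omega\phi(\tilde u_\varepsilon)(-\Delta)^{1-s}\tilde u_\varepsilon\,dx$ with $\phi(\xi)=|\xi|^{m+q-1}\operatorname{sgn}\xi$ nondecreasing; by the symmetry of $(-\Delta)^{1-s}$ (legitimate since $\tilde u_\varepsilon$ vanishes outside $\Omega$) and the Stroock--Varopoulos inequality (Lemma \ref{LemSt-va} with $\psi(\xi)=\xi$, exponent $1-s\in(0,1)$) this is $\le0$. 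Discarding the nonnegative contributions, letting $n\to\infty$ via \eqref{2.5}, and applying the comparison principle for $\partial_t^\alpha$ (the inequality $\partial_t^\alpha(\|\tilde u_\varepsilon\|_{L^q}^q)\le0$ with the right initial condition, proved at the regularized level where $g_{1-\alpha,n}(0)>0$ and passed to the limit, gives $\|\tilde u_\varepsilon(t)\|_{L^q}^q\le\|u_0\|_{L^q}^q$) yields \eqref{3.53}; the endpoints $q=1$ and $q=\infty$ follow by replacing $\operatorname{sgn}$ with smooth nondecreasing approximations and by letting $q\to\infty$ on the bounded domain $\Omega$. For \eqref{3.54} I repeat the computation with $q=2$ but retain the viscosity term; integrating over $(0,T)$, the time term leaves $\tfrac12\big(g_{1-\alpha}*\|\tilde u_\varepsilon\|_{L^2}^2\big)(T)-\tfrac12\|u_0\|_{L^2}^2(g_{1-\alpha}*1)(T)$ with $(g_{1-\alpha}*1)(T)=T^{1-\alpha}/\Gamma(2-\alpha)$, so discarding the nonnegative terms gives $\varepsilon\|\nabla\tilde u_\varepsilon\|_{L^2(\Omega\times(0,T))}^2\le\tfrac{T^{1-\alpha}}{2\Gamma(2-\alpha)}\|u_0\|_{L^2(\Omega)}^2$, i.e. \eqref{3.54} with $C=C(\alpha,T)$.

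The main difficulty is not the linear solvability but obtaining the a priori bounds with the clean, $\varepsilon$-independent right-hand sides as stated: this is only possible once one knows $\tilde u_\varepsilon=u_\varepsilon$, which turns the source into $\operatorname{div}\Theta(\tilde u_\varepsilon)$ and lets the Stroock--Varopoulos inequality supply a favorable sign. Carrying out this weak-to-mild identification while simultaneously legitimizing the fractional chain rule (Lemma \ref{Lem-Iden1}) and the use of the nonsmooth test functions $|u|^{q-2}u$ — in particular the singular case $1<q<2$, which needs an extra approximation of $G'$ — through the kernel regularization of Section \ref{Sec2}, and upgrading the resulting fractional differential inequality to the pointwise bound \eqref{3.53}, is the technical core of the argument.
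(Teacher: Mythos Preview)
Your overall architecture matches the paper's: treat \eqref{Prob-R} as a linear problem with prescribed source $\operatorname{div}(\Theta(u_\varepsilon)\mathbf 1_\Omega)\in L^2(0,T;H^{-1}(\Omega))$, obtain a weak solution via the abstract Hilbert-space theory (the paper cites \cite[Theorem~3.1]{Zacher2009} directly rather than redoing Galerkin), identify $\tilde u_\varepsilon$ with the mild solution $u_\varepsilon$, and then run the energy argument with test functions $|u|^{q-2}u$ using the kernel regularization $g_{1-\alpha,n}$, Lemma~\ref{Lem-Iden1}, and the Stroock--Varopoulos inequality. Your treatment of the cases $q\ge 2$, $q=\infty$, $q=1$, the singular range $1<q<2$, and the derivation of \eqref{3.54} from the $q=2$ computation all parallel the paper's proof.

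There is, however, one genuine gap in the order of your steps. You derive the boundedness of $\tilde u_\varepsilon$ \emph{from} the identification $\tilde u_\varepsilon=u_\varepsilon$, invoking the uniqueness part of Theorem~\ref{Themildsol}. But that uniqueness is stated and proved only in the class $L^\infty(0,T;X)$, so to invoke it you would need to know beforehand that the zero extension of $\tilde u_\varepsilon$ lies in $L^\infty(0,T;X)$---in particular, that it is bounded. Your argument is circular at this point. The paper proceeds in the opposite order: it first establishes $\tilde u_\varepsilon\in L^\infty(\Omega\times(0,T))$ by an \emph{independent} De~Giorgi--type boundedness result for linear time-fractional equations (\cite[Theorem~3.1]{Zacher2008}), which requires checking that $|\Theta(u_\varepsilon)|^2\in L^r(0,T;L^q(\Omega))$ for exponents satisfying the scaling condition $\frac{p'}{r}+\frac{N}{2q}=1-\alpha$ (the paper's \eqref{3.5}--\eqref{3.6}, verified in \eqref{3.7a} using $u_\varepsilon\in L^\infty(0,T;X)$ and the Riesz estimates). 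Only once $\tilde u_\varepsilon$ is known to be bounded does the paper extend it by zero, place it in $L^\infty(0,T;X)$, and perform the identification with $u_\varepsilon$. Your proposal omits this boundedness step entirely, and without it neither the identification nor the legitimacy of the test function $|u|^{q-2}u$ for large $q$ is secured.
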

	\begin{remark} \eqref{3.53} and \eqref{3.54} are the a priori estimates which play an important role in passing to the limit as $\varepsilon\to 0$ later on.  
	\end{remark}
	\begin{proof}[Proof of Theorem \ref{The-P_R}]
		First of all, we claim that $\operatorname{div} \big(\Theta(u) \mathbf{1}_{\Omega} \big)\in L^2\big(0,T; H^{-1}(\Omega)\big)$, $T>0$, where we denote $H^{-1}(\Omega)$ by the dual of $H^{1}_0(\Omega)$.
		\\
		Indeed, for any $t\in(0,T)$, we apply  H\"older's inequality  to obtain
		\begin{align}\label{3.40b}
			\big\|\operatorname{div} \big(\Theta(u) \mathbf{1}_{\Omega} \big)\big\|_{H^{-1}(\Omega)}  &
			= \sup_{ \|\psi\|_{H^1_0(\Omega)}\leq 1 }   \left|  \int_{\Omega}  |u|^{m}   \nabla (-\Delta)^{-s}u (x,t) \cdot  \nabla\psi (x) \,dx  \right|   \nonumber
			\\
			&\leq  \sup_{ \|\psi\|_{H^1_0(\Omega)}\leq 1 }   \|u\|^{m}_{L^\infty(Q_{T})} \|  \nabla (-\Delta)^{-s} u (t)  \|_{L^{2}(\Omega)}   \|\nabla \psi\|_{L^{2}(\Omega)}  \nonumber
			\\
			&\leq  \|u\|^{m}_{L^\infty(Q_{T})}  \| \nabla ( -\Delta)^{-s} u (t) \|_{L^{2}(\RR^N)} \,.
		\end{align}
		Next, applying the fundamental estimates of the Riesz transforms and the Riesz potential yields
		\begin{align}\label{3.40c}
			\|   \nabla (-\Delta)^{-s} u (t)\|_{L^{2}(\RR^N)}   &= \|\nabla \mathcal{I}_1 (-\Delta)^{1/2}  (-\Delta)^{-s} u (t)\|_{L^{2}(\RR^N)} = \| \mathcal{R}  (-\Delta)^{1/2-s} u (t) \|_{L^{2}(\RR^N)}  \nonumber
			\\
			& \lesssim  \|  \mathcal{I}_{2s-1}   u (t)\|_{L^{2}(\mathbb{R}^N)} \lesssim \|u (t)\|_{L^{\frac{2N}{N+2(2s-1)}}(\mathbb{R}^N)} \,.
		\end{align}
		Note that  $ 2_s=\frac{2N}{N+2(2s-1)}\in (1,2]$ since $N\geq 2$, and $s\in[1/2,1)$.
		\\
		Combining \eqref{3.40b} and \eqref{3.40c}	yields
		\begin{align}\label{3.4}
			\int^{T}_0\big\|\operatorname{div} \big(\Theta(u) \mathbf{1}_{\Omega} \big)(t)   \big\|^2_{H^{-1}(\Omega)}  \,dt & \lesssim       \|u\|^{2m}_{L^\infty(Q_{T})}  \int^{T}_0\|u (t)\|^{2}_{L^{ 2_s}(\RR^N)} dt \nonumber
			\\
			&\lesssim \|u\|^{2m+2-2_s}_{L^\infty(0,T;X)}   \|u\|^{2_s}_{L^{2_s}(Q_{T})} \,.
		\end{align}
		Thus, the claim follows.
		\\
		Now, we apply \cite[Theorem 3.1]{Zacher2009} to $f= \operatorname{div} \big(\Theta(u) \mathbf{1}_{\Omega} \big)$,  $\mathcal{V}=H^1_0(\Omega)$, and $\mathcal{H}=L^2(\Omega)$. Then, Eq \eqref{Prob-R}  has a unique weak solution  $\tilde{u}\in \mathcal{W}_{\Omega,T_1}$ for some $T_1>0$. 
		\\
		Set $T=\min\{ T_0, T_1 \}$. Remind that $T_0$ is the local existence time of $u$  in Theorem \ref{Themildsol}.
		\\
		Next, we show that  $\tilde{u}$ is bounded on  $\Omega\times(0,T)$. 
		To get the result, we are ready to apply \cite[Theorem 3.1]{Zacher2008} to $\tilde{u}$. 
		\\
		Observe that  $g_{1-\alpha}\in L^{p}(0,T_1)$ for some $p\in \left(1,\frac{1}{1-\alpha}\right)$. Thanks to \cite[Theorem 3.2]{Zacher2009}, we get $\tilde{u}\in L^{2p}(0,T_1;L^2(\Omega))$. Thus, it remains to verify that 
		\begin{equation}\label{3.5} 
			|\Theta(u)|^2  \in L^r\left( 0,T; L^q(\Omega) \right) , \end{equation}
		with 
		\begin{equation}\label{3.6}
			r\in \left[\frac{p'}{1-\alpha},\infty\right],\, q\in\left[ \frac{N}{2(1-\alpha)},\infty\right],\,  \text{ and }\,\frac{p'}{r} +  \frac{N}{2q}=1-\alpha  \,.
		\end{equation}
		For a straightforward computation, we can pick $r=\frac{2p'}{1-\alpha}$, and $q=\frac{N}{1-\alpha}$.  Since  $u\in L^\infty(0,T;X)$, then  for any $t\in(0,T)$  we get 
		\begin{align}\label{3.7a}
			\left\| \big|\Theta(u)(t)\big|^2 \right\|_{L^{q}(\Omega)} &\leq \|u\|^{2m}_{L^\infty(Q_{T})}  \left\|\mathcal{R}(-\Delta)^{1/2-s}u(t)\right\|^2_{L^{2q}(\RR^N)}
			\lesssim \|u\|_{L^\infty(Q_{T})}^{2m} \left\|\mathcal{I}_{2s-1} u(t)\right\|^2_{L^{2q}(\RR^N)}\nonumber
			\\
			&\lesssim\|u\|_{L^\infty(Q_{T})}^{2m}\|u(t)\|^2_{L^\frac{2qN}{N+2q(2s-1)}(\RR^N)} 
			\leq \big\|u\big\|_{ L^\infty(0,T; X)}^{2(m+1)} \,.
		\end{align}
		Note that $\frac{2qN}{N+2q(2s-1)}>1$ since $N\geq 2$, and $s\in [1/2,1)$.
		\\
		By integrating both sides of \eqref{3.7a} on $(0,T)$, we get
		\begin{align*}
			\int^{T}_0\left\|\big|\Theta(u)(t)\big|^2 \right\|_{L^{q}(\Omega)}^{r}  dt &\lesssim  T \big\|u\big\|_{ L^\infty(0,T; X)}^{2(m+1)r} \,,
		\end{align*} 
		which yields \eqref{3.5}.
		\\
		Therefore, there is a constant $C>0$ such that
		\[ 
		\|\tilde{u}(t)\|_{L^\infty(\Omega)} \leq C  , \quad\text{for   }t\in(0,T)\,,
		\]
		where $C$ depends on  $u_0, \Omega$, and the parameters involved.
		\\
		Thus,  $\tilde{u}$ is a bounded weak solution to Eq \eqref{Prob-R}. Next, we extend $\tilde{u}(x,t)$ by zero on $ \big(\RR^N  \setminus  \Omega\big) \times (0,T)$ (not labelled).  By the 
		above comments  (after \eqref{Prob-R}),  we have $\tilde{u}(x,t)  =  u(x,t)$  for a.e. $(x,t)\in Q_T$.  From here, we can consider $u$ as the unique weak solution of Eq \eqref{Prob-R}.
		\vspace{0.1in}
		\\
		\textbf{A priori estimates.}  Now, we prove \eqref{3.53} and \eqref{3.54}.
		Since kernel $g_{1-\alpha}$ is not smooth enough, then we have to work with $g_{1-\alpha,n}$ (a regularization of $g_{1-\alpha}$), introduced in Section \ref{Sec2}.  
		\\
		In what follows, we denote  $h_n=h_{\alpha,n}$, and  $g_n=g_{1-\alpha,n}$ for short. Then, we have the following lemma.
		\begin{lemma}\label{Lem3.1}
			$u  \in \mathcal{W}_{\Omega,T}$  is a weak solution of \eqref{P_epsilon}  if and only if for any test function $\psi(x) \in H^1_0(\Omega)$ and any $n \in  \mathbb{N}$, one has
			\begin{align}\label{3.2} 
				\int_{\Omega} \left(  \psi \frac{d}{dt}  [g_n  * (u-u_0)]     + \varepsilon   (h_n *  \nabla u ) \cdot\nabla \psi  \right) \, dx   =  -\int_{\Omega}  [h_n *  \Theta(u) ] \cdot  \nabla  \psi   \, dx\,,
			\end{align}
			for a.e.  $t\in (0,T)$.  
		\end{lemma}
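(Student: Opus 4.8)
The plan is to pass back and forth between the weak formulation \eqref{3.51a} and the identity \eqref{3.2} by convolving in time with the resolvent kernel $h_{\alpha,n}$, exploiting the two facts recorded in Section \ref{Sec2}: the factorization $g_{1-\alpha,n}=g_{1-\alpha}*h_{\alpha,n}$ of \eqref{2.4}, and the convergence $h_{\alpha,n}*f\to f$ in $L^p(0,T;\mathcal H)$ as $n\to\infty$ of \eqref{2.5}. Write $g_n=g_{1-\alpha,n}$ and $h_n=h_{\alpha,n}$ for short. In both implications I would first reduce to a scalar identity in the variable $t$ by inserting separated test functions $\varphi(x,t)=\psi(x)\phi(t)$ with $\psi\in H^1_0(\Omega)$, and then transfer between the $g_{1-\alpha}$-term and the $g_n$-term using the elementary fact that convolution by $h_n$ commutes with $\tfrac{d}{dt}$ once the function being convolved vanishes at $t=0$.

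First, to derive \eqref{3.2} from \eqref{3.51a}: insert $\varphi=\psi\phi$ with $\phi\in\mathcal C^\infty_c((0,T))$ into \eqref{3.51a}, and set $V(t):=\int_\Omega\psi\,(u(\cdot,t)-u_0)\,dx$ together with $G(t):=-\varepsilon\int_\Omega\nabla u(\cdot,t)\cdot\nabla\psi\,dx-\int_\Omega\Theta(u)(\cdot,t)\cdot\nabla\psi\,dx$. The resulting equality states exactly that $\tfrac{d}{dt}\big(g_{1-\alpha}*V\big)=G$ in $\mathcal D'((0,T))$. Since $\nabla u,\Theta(u)\in L^2(Q_T)$ one has $G\in L^1(0,T)$, and since $g_{1-\alpha}*(u-u_0)\in C([0,T];L^2(\Omega))$ vanishes at $t=0$, the scalar function $F:=g_{1-\alpha}*V$ belongs to $W^{1,1}(0,T)$ with $F(0)=0$, $\dot F=G$, hence $F=1*\dot F$. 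Using this, \eqref{2.4}, associativity of convolution and $\tfrac{d}{dt}(1*w)=w$ for $w\in L^1$, I would compute
\[
\frac{d}{dt}\big(g_n*V\big)=\frac{d}{dt}\big(h_n*(g_{1-\alpha}*V)\big)=\frac{d}{dt}\big(h_n*F\big)=\frac{d}{dt}\big(1*(h_n*\dot F)\big)=h_n*\dot F=h_n*G,
\]
and finally interchange the $h_n$-convolution with the spatial integrals by Fubini, so that the right-hand side becomes $-\varepsilon\int_\Omega(h_n*\nabla u)\cdot\nabla\psi\,dx-\int_\Omega(h_n*\Theta(u))\cdot\nabla\psi\,dx$ and the left-hand side becomes $\int_\Omega\psi\,\tfrac{d}{dt}[g_n*(u-u_0)]\,dx$; this is precisely \eqref{3.2}, valid for a.e.\ $t\in(0,T)$.

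Conversely, to recover \eqref{3.51a} from \eqref{3.2}: multiply \eqref{3.2} by $\phi\in\mathcal C^1([0,T])$ with $\phi(T)=0$, integrate over $(0,T)$ and integrate the first term by parts, the boundary contributions vanishing because $(g_n*(u-u_0))|_{t=0}=0$ and $\phi(T)=0$. Rewriting $g_n*(u-u_0)=h_n*\big[g_{1-\alpha}*(u-u_0)\big]$ via \eqref{2.4} and letting $n\to\infty$, the convergences $h_n*[g_{1-\alpha}*(u-u_0)]\to g_{1-\alpha}*(u-u_0)$, $h_n*\nabla u\to\nabla u$ and $h_n*\Theta(u)\to\Theta(u)$ in $L^2$ supplied by \eqref{2.5} produce \eqref{3.51a} for all test functions of the separated form $\varphi=\psi\phi$. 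A routine density argument — finite sums $\sum_j\psi_j(x)\phi_j(t)$ are dense in $\{\varphi\in L^2(0,T;H^1_0(\Omega))\cap H^1(0,T;L^2(\Omega)):\varphi(T)=0\}$ — then gives \eqref{3.51a} in full.

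I expect the only genuinely delicate step to be the time-convolution identity $\tfrac{d}{dt}(g_n*V)=h_n*\tfrac{d}{dt}(g_{1-\alpha}*V)$, namely that convolving the weak equation by $h_n$ turns the weak time-fractional term into the Yosida-regularized term $\tfrac{d}{dt}(g_n*\cdot)$; this hinges on the vanishing $F(0)=0$, on the factorization \eqref{2.4}, and on the $W^{1,1}(0,T)$ regularity of $g_n$ and $h_n$ (which is also what gives the pointwise-in-$t$ statement \eqref{3.2} its meaning). The remaining ingredients — reading off $\tfrac{d}{dt}(g_{1-\alpha}*V)=G$ from the distributional weak form, the Fubini interchanges, and the density of separated test functions compatible with the terminal condition $\varphi(T)=0$ — are routine, the only care needed being that $g_{1-\alpha}$ and $h_n$ are merely locally integrable, not bounded, near $t=0$.
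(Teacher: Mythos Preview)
The paper does not actually prove Lemma \ref{Lem3.1}; it simply refers the reader to \cite[Lemma 3.1]{Zacher2008} and notes that only a slight modification is needed. Your argument is precisely the standard one underlying that reference: reduce to separated test functions, read off $\tfrac{d}{dt}(g_{1-\alpha}*V)=G$ in $\mathcal D'((0,T))$, convolve with $h_n$ and use the factorization \eqref{2.4} together with $(g_{1-\alpha}*V)(0)=0$ to obtain $\tfrac{d}{dt}(g_n*V)=h_n*G$, and for the converse pass to the limit via \eqref{2.5} and close by density. So your proposal is correct and matches the intended proof.
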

		\begin{proof}[Proof of Lemma \ref{Lem3.1}]  The proof is just a slight modification of the one in \cite[Lemma 3.1]{Zacher2008}. Thus, we leave the details to the reader.
		\end{proof}	
		By a technical problem, we divide the proof of \eqref{3.53} into the  following cases:
		\\
		
		{\bf i) The case $q\geq 2$.}
		We apply \eqref{2.6} to $k=g_n$, and $G(u)=\displaystyle\frac{|u|^{q}}{q}$.  Note that $G$ is convex whenever $q>1$, and $g_n(t)$ is nonnegative and nonincreasing. 
		Thanks to Remark \ref{Rem2.1}, for a.e. $t\in(0,T)$  we obtain
		\begin{align}\label{3.7}
			G' \big(u ( t )\big) \frac{d}{dt}( g_n * u )( t ) \geq 
			\frac{d}{dt}\left[ g_n * G ( u )\right]  ( t )+
			\big[ -G \big(u ( t ) \big)
			+ G' \big(u ( t )\big)  u ( t ) \big] g_n ( t )  \,.
		\end{align}
		Therefore,
		\begin{align*}
			|u|^{q-2}u \frac{d}{dt}( g_n * u )( t ) \geq  \frac{1}{q}
			\frac{d}{dt} \left[g_n * |u|^q\right]  ( t )+
			\left(\frac{q-1}{q}\right) |u|^q(t) g_n ( t )  \,.
		\end{align*}
		Integrating both sides of  this inequality on $\Omega$   yields
		\begin{equation}\label{3.9}
			\int_{\Omega} |u|^{q-2}u \frac{d}{dt}( g_n * u )(x, t )  \, dx \geq  \frac{1}{q}\frac{d}{dt} \left[g_n * \|u\|^q_{L^q(\Omega)}\right]  ( t ) 
			+ \left(\frac{q-1}{q}\right) g_n ( t )\|u 
			(t)\|^q_{L^q(\Omega)}  \,.
		\end{equation}
		Next, let us take $\psi=  |u|^{q-2}u$, $q\geq 2$   as a test function to \eqref{3.2}. Then, we have 
		\begin{align*}
			&\int_{\Omega} \left( |u|^{q-2}u \frac{d}{dt}  [g_n  * (u-u_0)]+ \varepsilon   (h_n *  \nabla u ) \cdot\nabla |u|^{q-2}u  \right) (x,t) \, dx\\
			&=   -\int_{\Omega}  [h_n *  \Theta(u) ] \cdot  \nabla  |u|^{q-2}u (x,t) \, dx  \,, 
		\end{align*}
		and  
		\begin{align*}
			&\int_{\Omega}  |u|^{q-2}u \frac{d}{dt} (g_n  * u)  (x,t) \, dx  + \varepsilon   \int_{\Omega} (h_n *  \nabla u ) \cdot\nabla |u|^{q-2}u(x,t) \, dx   
			\\
			&=\int_{\Omega}  |u|^{q-2}u(x,t) \frac{d}{dt} [g_n (t) * u_0(x)]  \, dx  -\int_{\Omega}  [h_n *  \Theta(u) ] \cdot  \nabla  |u|^{q-2}u(x,t)  \, dx
			\\
			&=  g_n (t)  \int_{\Omega}  |u|^{q-2}u(x,t) u_0(x)  \, dx  -\int_{\Omega}  [h_n *  \Theta(u) ] \cdot  \nabla  |u|^{q-2}u(x,t)  \, dx \,.  
		\end{align*}
		With the last integral equation noted, it follows from \eqref{3.9} that
		\begin{align}\label{3.10}
			&  \frac{1}{q} \frac{d}{dt}\left[g_n * \|u\|^q_{L^q(\Omega)}\right]  ( t ) + \left(\frac{q-1}{q}\right)\|u\|^q_{L^q(\Omega)}  g_n ( t )   + \varepsilon \int_{\Omega}  (h_n *  \nabla u ) \cdot\nabla |u|^{q-2}u (x,t) \, dx \nonumber
			\\
			&\leq  g_n (t)  \int_{\Omega}  |u|^{q-2}u(x,t) u_0(x)  \, dx -\int_{\Omega}  [h_n *  \Theta(u) ] \cdot  \nabla  |u|^{q-2}u (x,t) \, dx\,.  
		\end{align}
		Applying H\"older's inequality and Young's inequality  yields 
		\begin{align*}
			\int_{\Omega}  |u|^{q-2}u(x,t) u_0(x)  \, dx    &\leq \| u (t)\|^{q-1}_{L^q(\Omega)} \| u_0  \|_{L^q(\Omega)}
			\leq \left(\frac{q-1}{q}\right) \| u  (t)\|^{q}_{L^q(\Omega)}  +  \frac{1}{q}  \| u_0  \|^q_{L^q(\Omega)}  \,.
		\end{align*}  
		From  the last inequality and \eqref{3.10}, we obtain
		\begin{align*}
			&\frac{1}{q}\frac{d}{dt} \left[g_n * \|u\|^q_{L^q(\Omega)}\right]  ( t ) + \varepsilon \int_{\Omega}  (h_n *  \nabla u ) \cdot\nabla |u|^{q-2}u(x,t)  \, dx \nonumber
			\\
			&\leq  g_n (t)  \frac{1}{q}  \| u_0  \|^q_{L^q(\Omega)}   -\int_{\Omega}  [h_n *  \Theta(u) ] \cdot  \nabla  |u|^{q-2}u(x,t)  \, dx  \,.  
		\end{align*}
		Thus, 
		\begin{align}\label{3.11}
			\frac{1}{q}\frac{d}{dt} \left[g_n *  \left( \|u\|^q_{L^q(\Omega)} -\| u_0  \|^q_{L^q(\Omega)}\right)\right] (t) &+ \varepsilon \int_{\Omega}  (h_n *  \nabla u ) \cdot\nabla |u|^{q-2}u(x,t)  \, dx\nonumber
			\\
			&\leq  -\int_{\Omega}  [h_n *  \Theta(u) ] \cdot  \nabla  |u|^{q-2}u(x,t)  \, dx   \,.  
		\end{align}
		By  \eqref{2.5} and \eqref{2.6a}, we can  pass to the limit as  $n\to\infty$  in \eqref{3.11} in order to obtain
		\begin{align}\label{3.13}
			F(t)  + \varepsilon  \int_{\Omega}   \nabla u  \cdot\nabla |u|^{q-2}u(x,t)  \, dx 
			+\int_{\Omega}    \Theta(u) \cdot  \nabla  |u|^{q-2}u(x,t)  \, dx  \leq 0 \,,
		\end{align}
		where 
		\begin{equation}\label{3.12}
			\frac{1}{q}\left( \|u(t)\|^q_{L^q(\Omega)} -\| u_0  \|^q_{L^q(\Omega)}\right) =g_\alpha* F(t)\,.
		\end{equation} 
		Observe that 
		\begin{align}\label{3.14}
			\int_{\Omega}  \Theta(u)  \cdot  \nabla |u|^{q-2}u(x,t)   \, dx 
			&=   (q-1) \int_{\Omega}    \nabla  (-\Delta)^{-s}u  \cdot    |u|^{m+  q-2} \nabla u  (x,t)  \, dx    		\nonumber
			\\
			& = \frac{q-1}{m+ q- 1}\int_{\Omega}   \nabla  (-\Delta)^{-s} u  \cdot  \nabla \left(|u|^{m+q-2}u \right) (x,t)  \, dx   \nonumber
			\\
			& =  \frac{q-1}{m+ q- 1}\int_{\Omega}  \big(|u|^{m+q-2}u \big)   (-\Delta)^{1-s} u    (x,t)  \, dx \,.
		\end{align}
		Thanks to the Stroock--Varopoulos inequality in Lemma \ref{LemSt-va},  we obtain
		\begin{align*}
			\int_{\Omega}  \big(|u|^{m+q-2}u \big)   (-\Delta)^{1-s} u    (x,t)  \, dx 
			\geq \frac{m+q-1}{\theta^2_q}  \big\|  |u|^{\theta_q-1}u  (t)   \big\|^2_{\dot{H}^{1-s}(\Omega)} \,,
		\end{align*}  
		with \,$\theta_q=\frac{m+q}{2}$.
		\\
		With this inequality note,  it follows from \eqref{3.14} that 
		\begin{equation}\label{3.14a}   
			\int_{\Omega}  \Theta(u)  \cdot  \nabla |u|^{q-2}u   (x,t)\, dx   \geq \frac{q-1}{ \theta_q^2 }\big\|  |u|^{\theta_q-1}u  (t)   \big\|^2_{\dot{H}^{1-s}(\Omega)} \,.
		\end{equation}
		In addition, it is easy to verify that 
		\begin{equation}\label{3.14b}
			\int_{\Omega}   \nabla u  \cdot  \nabla |u|^{q-2}u  (x,t)\, dx  =  (q-1)  \int_{\Omega} |u|^{q-2} |\nabla u|^2   (x,t)  \, dx   \geq 0\,.
		\end{equation}
		From \eqref{3.13}, \eqref{3.14a}, and \eqref{3.14b}, we find that  $F(t)\leq 0$ for a.e. $t\in(0,T)$. Since $g_\alpha$ is strictly positive  in $(0,\infty)$, then we obtain
		\[ \frac{1}{q} \left( \|u(t)\|^q_{L^q(\Omega)}- \| u_0  \|^q_{L^q(\Omega)}\right) =g_\alpha*F(t)\leq 0  \,, \]  
		for a.e. $t\in(0,T)$,
		which yields \eqref{3.53}. 
		\begin{remark}\label{Rem3.1}
			Observe that  $\|u(t)\|_{L^q(\Omega)}$ is nonincreasing on $(0,T)$  for $q\in[2,\infty)$. We emphasize that this conclusion is also true for the remaining cases. 
		\end{remark}
		\begin{remark}\label{Rem3.3}
			We deduce from \eqref{3.13} and  \eqref{3.14a} that 
			\begin{align}\label{3.14e}
				F(t) +\varepsilon (q-1)\int_{\Omega} |u|^{q-2} |\nabla u|^2 (x,t)\,dx+\frac{q-1}{ \theta_q^2 }\big\|  |u|^{\theta_q-1}u  (t)   \big\|^2_{\dot{H}^{1-s}(\Omega)}  \leq 0  \,,
			\end{align}
			for  a.e. $t\in(0,T)$. 
			\\
			By \eqref{3.12}, taking the convolution with  $g_{1-\alpha}*g_{\alpha}$ to both sides of \eqref{3.14e} provides us
			\begin{align}\label{3.14c}
				\frac{1}{q} \left[g_{1-\alpha}*  \left( \|u\|^q_{L^q(\Omega)} -\| u_0 \|^q_{L^q(\Omega)}\right)\right](t)  &+\varepsilon (q-1)\int^t_0 \int_{\Omega} |u|^{q-2} |\nabla u|^2 (x,t)\,dx d\tau  \nonumber
				\\
				&+  \frac{q-1}{ \theta_q^2 }  \int^t_0  \big\||u|^{\theta_q-1}u (\tau)\big\|^2_{\dot{H}^{1-s}(\Omega)}  d\tau\leq 0\,.
			\end{align}
			According to the monotonicity of $\|u(t)\|^q_{L^q(\Omega)}$ (see Remark \ref{Rem3.1}), one observes that
			\[
			\left[g_{1-\alpha}  *\|u\|^q_{L^q(\Omega)}\right]    (t) =  \int^t_0  \|u(\tau)\|^q_{L^q(\Omega)} \frac{(t-\tau)^{-\alpha}}{\Gamma(1-\alpha)}  \,d\tau \geq  C_\alpha  t^{1-\alpha} \|u(t)\|^q_{L^q(\Omega)}\,,   \]
			with  $C_\alpha=\frac{1}{(1-\alpha)\Gamma(1-\alpha)}$. 
			\\
			With the fact noted,  it follows then from \eqref{3.14c} that 
			\begin{align}\label{3.14d}
				C_\alpha t^{1-\alpha} \|u(t)\|^q_{L^q(\Omega)} &+\varepsilon q(q-1)\int^t_0 \int_{\Omega} |u|^{q-2} |\nabla u|^2 (x,t)\,dx d\tau \nonumber
				\\
				&+ \frac{q(q-1)}{\theta_q^2}  \int^t_0  \big\||u|^{\theta_q-1}u(\tau)\big\|^2_{\dot{H}^{1-s}(\Omega)}d\tau\leq C_\alpha t^{1-\alpha} \| u_0 \|^q_{L^q(\Omega)} \,.
			\end{align}
			As a result,  estimate \eqref{3.54} follows from \eqref{3.14e} when $q=2$.
			\\
			We emphasize that \eqref{3.14d} will be used to prove the decay estimates of the weak solution of Eq \eqref{1} later on. 
		\end{remark}
		
		{\bf ii) The case  $q=\infty$.}  Since \eqref{3.53} holds for any $q\geq 2$, then the conclusion  follows by passing to the limit as $q\to \infty$  in \eqref{3.53}. 
		\\
		
		{ \bf  iii) The case  $q=1$.}  To prove \eqref{3.53} for $q=1$,  we just mimic the proof of $L^q$-estimate  $q\geq 2$. In this case, we use  the test functions $G=S_\eta(u)$, and $\psi  = \chi_{\eta} (u)$, where   we denote  
		\[ \chi_{\eta} (r)  = \left\{\begin{array}{cc}
			{\rm sign} (r)  &   \text{  if }  |r|\geq\eta   \\
			\frac{r}{\eta} &  \text{  if }  |r|<\eta
		\end{array} \right.,\quad   S_\eta(r)  =  \int^r_0   \chi_{\eta} (\tau)\, d\tau  \]
		for any $\eta>0$.
		\\
		Obviously, $S_\eta(r)$ is a convex function.
		Then, we obtain as in \eqref{3.7} that
		\begin{align*}
			\int_{\Omega}  \chi_\eta(u)\frac{d}{dt}  (g_n *u) (x,t) 
			\,dx  & \geq 
			\frac{d}{dt}  \left[ g_n * \|S_\eta(u)\|_{L^1(\Omega)} \right](t) 
			\\
			&+  \int_{\Omega} \big(-S_\eta(u)+\chi_\eta(u) u \big)(x,t)  g_n(t)\, dx\,.
		\end{align*}
		A simple calculation shows that 
		\[ -S_\eta(u)+\chi_\eta(u)u =  \frac{u^2}{2\eta}  \mathbf{1}_{\{|u|<\eta\}}  +  \frac{\eta}{2} \mathbf{1}_{\{|u|\geq \eta\}}  \,.   \]
		With this identity noted, it follows from the last inequality that
		\begin{equation}\label{3.15}
			\int_{\Omega}  \chi_\eta(u)\frac{d}{dt}  (g_n *u) (x,t) 
			\, dx  \geq 
			\frac{d}{dt} \left[ g_n * \|S_\eta(u)\|_{L^1(\Omega)} \right](t) \,.
		\end{equation}
		Next, we pick  $\chi_\eta(u)$ as a  test function to \eqref{3.2}. Then, we obtain
		\begin{align*}
			\int_{\Omega}  \chi_\eta(u) \frac{d}{dt}  [g_n  * (u-u_0)] (x,t)\, dx    &+ \varepsilon  \int_{\Omega}  (h_n *  \nabla u ) \cdot  \nabla \chi_\eta(u)  (x,t)\,  dx 
			\\
			&=  -\int_{\Omega}  [h_n *  \Theta(u) ] \cdot  \nabla\chi_\eta(u)  (x,t) \, dx  \,.  
		\end{align*}
		Hence,
		\begin{align*}
			&\int_{\Omega}  \chi_\eta(u) \frac{d}{dt}  (g_n  * u) (x,t)\, dx    + \varepsilon  \int_{\Omega}  (h_n *  \nabla u ) \cdot  \nabla \chi_\eta(u)  (x,t)\,  dx 
			\\
			&= g_n(t) \int_{\Omega}  \chi_\eta(u (x,t))    u_0(x)\, dx -\int_{\Omega}  [h_n *  \Theta(u) ] \cdot  \nabla\chi_\eta(u)(x,t)   \, dx
			\\
			&\leq  g_n(t)  \|u_0\|_{L^1(\Omega)} -\int_{\Omega}  [h_n *  \Theta(u) ] \cdot  \nabla\chi_\eta(u)(x,t)   \, dx\,.  
		\end{align*}
		With the last inequality noted, it follows from \eqref{3.15} that
		\begin{align*}
			&\frac{d}{dt} \left[ g_n *  \left( \|S_\eta(u)\|_{L^1(\Omega)} 
			- \|S_\eta(u_0)\|_{L^1(\Omega)}\right)\right](t) + \varepsilon  \int_{\Omega}  (h_n *  \nabla u ) \cdot  \nabla \chi_\eta(u)  (x,t)\,  dx  
			\\
			&\leq g_n(t) \left( 
			\|u_0\|_{L^1(\Omega)} -\|S_\eta(u_0)\|_{L^1(\Omega)}  \right) -\int_{\Omega}  [h_n *  \Theta(u) ] \cdot  \nabla\chi_\eta(u)(x,t)   \, dx\,.
		\end{align*}
		At the moment, let us set 
		$$ \|S_\eta(u(t))\|_{L^1(\Omega)} 
		- \|S_\eta(u_0)\|_{L^1(\Omega)}=  g_\alpha *  F^{(\eta)}_1(t)\,.$$
		Thanks to \eqref{2.5} and\eqref{2.6a}, we can pass to the limit 
		as $n\to \infty$ in the last inequality in order to get
		\begin{align}\label{3.20}
			F^{(\eta)}_1(t)+ \varepsilon  \int_{\Omega}  \nabla u\cdot  \nabla \chi_\eta(u)  (x,t)\,  dx  &+\int_{\Omega}  \Theta(u)  \cdot  \nabla\chi_\eta(u) (x,t)  \, dx  \nonumber
			\\
			&\leq g_{1-\alpha}(t) \left( \|u_0\|_{L^1(\Omega)} -\|S_\eta(u_0)\|_{L^1(\Omega)}
			\right)\,.
		\end{align}
	Since $\chi_\eta (\cdot)$ is a nondecreasing function, then we have 
	$$ \int_{\Omega}    \nabla u  \cdot  \nabla \chi_\eta(u)  (x,t)\,  dx   =  \int_{\Omega}    |\nabla u|^2  \chi^\prime_\eta(u)  (x,t)\,  dx  \geq 0  \,.$$
	In addition, observe that  $\chi^\prime_\eta(l) = \eta^{m} \chi^\prime_{\eta^{m+1}}(|l|^m l)$, $l\in \RR$. Thus, we have
	\begin{align*}
		|u|^m  \nabla \chi_\eta   (u)  =  |u|^m \chi^\prime_\eta   (u) \nabla u &=   \frac{\eta^{m}}{m+1} \chi^\prime_{\eta^{m+1}}   (|u|^m 
		u)  \nabla   (|u|^m 
		u)\\   
		&= \frac{\eta^{m}}{m+1} \nabla\chi_{\eta^{m+1}}   (|u|^m u) \,. 
	\end{align*}
	With the identities noted,  we find that 
	\begin{align*}
		\int_{\Omega} \Theta(u) \cdot  \nabla\chi_\eta(u)  (x,t) \, dx  &=  \int_{\Omega} 
		\nabla (-\Delta)^{-s}  u  \cdot   |u|^m   \nabla  \chi_\eta(u)  (x,t) \, dx  
		\\
		&=  \frac{\eta^{m}}{m+1}\int_{\Omega} 
		\nabla (-\Delta)^{-s}  u  \cdot     \nabla\chi_{\eta^{m+1}}   (|u|^m u) (x,t) \, dx  
		\\
		&=   \frac{\eta^{m}}{m+1} \int_{\Omega} 
		\big[(-\Delta)^{1-s}  u  \big] \big[ \chi_{\eta^{m+1}}   (|u|^m u)\big] (x,t) \, dx  
		\geq 0,   
	\end{align*} 
	since 	the Stroock--Varopoulos inequality, see Lemma \ref{LemSt-va}.
	\\
	Thus, it follows from \eqref{3.20} that
	\[  F^{(\eta)}_1(t) \leq  g_{1-\alpha}(t)  \big( \|u_0\|_{L^1(\Omega)} -\|S_\eta(u_0)\|_{L^1(\Omega)} 
	\big) \,. \]
	Finally, passing to the limit as $\eta\to0$  yields
	\begin{align*}
		\|u(t)\|_{L^1(\Omega)}
		- \|u_0\|_{L^1(\Omega)}  &= \lim_{\eta\to 0} 
		g_{\alpha} *  F^{(\eta)}_1(t)
		\\
		& \leq \lim_{\eta\to 0}  g_\alpha* g_{1-\alpha}(t)  \big( \|u_0\|_{L^1(\Omega)} -\|S_\eta(u_0)\|_{L^1(\Omega)} 
		\big)
		\\
		& =\lim_{\eta\to 0} \big(\|u_0\|_{L^1(\Omega)} -\|S_\eta(u_0)\|_{L^1(\Omega)} \big)
		= 0,  
	\end{align*}  since 
	$g_\alpha* g_{1-\alpha}(t)=1$, and $\displaystyle\lim_{\eta\to 0} S_\eta(w)= w$ in $L^1(\Omega)$ for $w\in L^1(\Omega)$.
	\\
	Thus, we get \eqref{3.53} when $q=1$.
	\\
	
	{\bf iv) The case  $1<q<2$.} The proof is similar to the one of the case $q\geq 2$. But, one cannot pick $|u|^{q-2}u$ as a test function to Eq \eqref{Prob-R} as in \eqref{3.2} when $1<q<2$. Thus, we have to make a slight modification to this test function. Let us  introduce the following cut-off function:
	\\
	For any $\delta>0$, we set $$T_\delta(r)  = |r|^{q-2}r  \mathbf{1}_{\{|r|>\delta\}} + \delta^{q-2} (2r-\delta{\rm sign} (r))  \mathbf{1}_{\{ \frac{\delta}{2} < |r|\leq \delta\}}\,.$$ 
	It is clear that  function  $T_\delta$ is continuous 
	and nondecreasing, and  
	$$
	T^\prime_\delta(r) = 
	(q-1) |r|^{q-2}  \mathbf{1}_{\{|r|>\delta\}} + 2\delta^{q-2}   \mathbf{1}_{\{ \frac{\delta}{2} < |r|\leq \delta\}}  \,.$$
	Moreover, for $\delta>0$ fixed, one can verify easily that $T_\delta (u)\in L^2\big( (0,T) ; H^1_0(\Omega) \big)$ for  $u\in\mathcal{W}_{\Omega, T}$. Next, we set  
	$$
	G_\delta  (r)  =  \int^r_0  T_\delta (\tau) \, d\tau = \left(\frac{|r|^q -\delta^q }{q}+\frac{\delta^q}{4}\right)\mathbf{1}_{\{|r|>\delta\}}  +  \delta^{q-2}  \left(r^2  -  \delta |r|  +\frac{\delta^2}{4}\right) \mathbf{1}_{ \{\frac{\delta}{2} < |r|\leq \delta\}}     \,. 
	$$
	Obviously, $G_\delta\in \mathcal{C}^1(\RR)$ is a convex function. 
	\\
	Then, we repeat the proof of  the case  $q\geq 2$ with $G(u)=G_\delta(u)$, and $\psi=T_\delta(u)$. Indeed, proceeding  as in \eqref{3.7} yields
	\begin{align*}
		G^\prime_\delta(u) \frac{d}{dt}( g_n * u)( t ) \geq \frac{d}{dt}\left[ g_n *  G_\delta ( u )\right]  ( t )+\big[ - G_\delta \big(u ( t ) \big)+  G^\prime_\delta \big(u ( t )\big)  u ( t ) \big] g_n ( t )  \,.
	\end{align*}
	Observe that
	\begin{align*}
		- G_\delta \big(u\big)+ G^\prime_\delta \big(u\big)  u   =& \left[ \left(1-\frac{1}{q}\right) |u|^{q} 
		+\frac{\delta^{q}}{q}-\frac{\delta^q}{4}  \right]\mathbf{1}_{\{|u|>\delta\}}  
		+\delta^{q-2} \left( u^2_R -\frac{\delta^2}{4}  \right)\mathbf{1}_{\{\frac{\delta}{2}<|u|\leq \delta\}}  \,.
	\end{align*}
	With the identity noted, it follows from the indicated inequality that  
	\begin{align*}
		T_\delta(u) \frac{d}{dt}( g_n * u )( t ) \geq 
		\frac{d}{dt} \left[g_n *  G_\delta ( u )\right]  ( t )+  \left(1-\frac{1}{q}\right) 
		g_n(t) |u|^{q} \mathbf{1}_{\{|u|>\delta\}}(t) \,.
	\end{align*}
	Integrating the last inequality on $\Omega$  yields
	\begin{align}\label{3.21}
		\int_{\Omega} T_\delta(u) \frac{d}{dt}( g_n * u)( x,t ) \, dx & \geq \frac{d}{dt} \left[ g_n * \big\|  G_\delta (u)\big\|_{L^1(\Omega)}   \right](t)   \nonumber
		\\
		&+   \left(1-\frac{1}{q}\right) g_n(t) \int_{\Omega} \left|u
		\right|^{q} \mathbf{1}_{\{|u|>\delta\}}(x,t) \, dx \,.
	\end{align}
	By picking $\psi= T_\delta (u)$ as a test function to \eqref{3.2}, we get 
	\begin{align*}
		&\int_{\Omega} \left[ T_\delta (u) \frac{d}{dt}  \left[g_n  * \big(u - u_0\big)\right]    
		+ \varepsilon   (h_n *  \nabla u ) \cdot \nabla T_\delta (u) \right] (x,t)\, dx   
		\\
		&=  -\int_{\Omega}  [h_n *  \Theta(u) ] \cdot  \nabla T_\delta (u)(x,t) \, dx  \,.  
	\end{align*}
	Then,
	\begin{align*}
		&\int_{\Omega}  T_\delta (u) \frac{d}{dt} (g_n  * u )(x,t) \, dx  + \varepsilon   \int_{\Omega} (h_n *  \nabla u ) \cdot\nabla T_\delta (u)(x,t)\, dx   
		\\
		&=  g_n (t)  \int_{\Omega} T_\delta (u)(x,t) u_0(x)  \, dx  -\int_{\Omega}  [h_n *  \Theta(u) ] \cdot  \nabla  T_\delta (u) (x,t)\, dx \,.  
	\end{align*}
	Inserting  \eqref{3.21} into the last integral identity  yields
	\begin{align*}
		&\frac{d}{dt} \left[ g_n * \big\|  G_\delta ( u )\big\|_{L^1(\Omega)}   \right](t) 
		+ \left(1-\frac{1}{q}\right)  g_n(t) \big\| |u|^{q} \mathbf{1}_{\{|u|>\delta\}} (t)\big\|_{L^1(\Omega)}  
		\\
		&+ \varepsilon   \int_{\Omega} (h_n *  \nabla u ) \cdot\nabla T_\delta (u)(x,t)\, dx \nonumber
		\\
		&\leq  g_n (t)  \int_{\Omega} T_\delta (u)(x,t) u_0(x)  \, dx  -\int_{\Omega}  [h_n *  \Theta(u) ] \cdot  \nabla  T_\delta (u)(x,t)\, dx\,.
	\end{align*}
	We rewrite  the  inequality to obtain 
	\begin{align}\label{3.22}
		&\frac{d}{dt} \left[ g_n * \left(\big\|  G_\delta ( u )\big\|_{L^1(\Omega)}- \big\|  G_\delta ( u_0 )\big\|_{L^1(\Omega)}  \right) \right](t) 
		+ \left(1-\frac{1}{q}\right)  g_n(t) \big\| |u|^{q} \mathbf{1}_{\{|u|>\delta\}} (t)\big\|_{L^1(\Omega)}  \nonumber
		\\
		&+ \varepsilon   \int_{\Omega} (h_n *  \nabla u ) \cdot\nabla T_\delta (u)(x,t)\, dx + \int_{\Omega}  [h_n *  \Theta(u) ] \cdot  \nabla  T_\delta (u)  (x,t)\, dx  \nonumber
		\\
		&\leq  g_n (t)  \int_{\Omega} T_\delta (u)(x,t) u_0(x)  \, dx  - g_n (t) \big\|  G_\delta ( u_0 )\big\|_{L^1(\Omega)} \,.
	\end{align}
	Passing to the limit as $n\to \infty$ in \eqref{3.22}  yields
	\begin{align}\label{3.23}
		&F^{(\delta)}_2(t)  + \left(1-\frac{1}{q}\right)  g_{1-\alpha}(t) \big\| |u|^{q} \mathbf{1}_{\{|u|>\delta\}} (t)\big\|_{L^1(\Omega)}  + \varepsilon   \int_{\Omega}  \nabla u \cdot\nabla T_\delta (u)(x,t)\, dx \nonumber
		\\
		& + \int_{\Omega}   \Theta(u) \cdot  \nabla  T_\delta (u)(x,t)\, dx  \leq g_{1-\alpha}(t)  \int_{\Omega} T_\delta (u)(x,t) u_0(x)  \, dx  - g_{1-\alpha} (t)\big\|  G_\delta ( u_0 )\big\|_{L^1(\Omega)} \,,
	\end{align}
	with 
	\[  \big\|  G_\delta ( u (t))\big\|_{L^1(\Omega)}- \big\|  G_\delta ( u_0 )\big\|_{L^1(\Omega)} =g_\alpha* F^{(\delta)}_2(t) \,.    \]
	Again, observe that  $$\int_{\Omega}  \nabla u \cdot\nabla T_\delta (u)(x,t)\, dx,   \, \int_{\Omega}   \Theta(u) \cdot  \nabla  T_\delta (u)(x,t)\, dx\geq 0  $$  
	since $T^\prime_\delta(\cdot)\geq 0$, and the Stroock--Varopoulos inequality.
	\\
	Thus, it follows from \eqref{3.23} that
	\begin{align}\label{3.24}
		F^{(\delta)}_2(t)  &+ \left(1-\frac{1}{q}\right)  g_{1-\alpha}(t) \big\| |u|^{q}\mathbf{1}_{\{|u|>\delta\}} (t)\big\|_{L^1(\Omega)}  \nonumber
		\\
		&\leq g_{1-\alpha}(t)  \left(\big\| T_\delta (u(t)) u_0 \big\|_{L^1(\Omega)} - \big\|  G_\delta ( u_0 )\big\|_{L^1(\Omega)} \right).
	\end{align}
	On the other hand, we deduce from the interpolation inequality that 
	\begin{align*}
		\|u(t)\|_{L^q(\Omega)} \leq \|u(t)\|^{q/2}_{L^2(\Omega)}  \|u(t)\|^{1-q/2}_{L^1(\Omega)}  \leq \|u_0\|^{q/2}_{L^2(\Omega)}  \|u_0\|^{1-q/2}_{L^1(\Omega)}  ,\,\quad   1<q<2 \,.
	\end{align*}
	Therefore, $\|u(t)\|_{L^q(\Omega)}$ is uniformly bounded w.r.t $\varepsilon$, for all $t\in(0,T)$. In particular, $|u(x,t)|^q$ is integrable on $\Omega$. Since 
	$0\leq G_\delta (u)\leq \frac{1}{q}|u|^q$ for all $\delta>0$, and $G_\delta(u)\to \frac{1}{q}|u|^q$ as $\delta\to 0$, then it follows from the dominated convergence theorem that
	\[
	\big\|G_\delta ( u )\big\|_{L^1(\Omega)}\to \frac{1}{q} 
	\big\||u|^q\big\|_{L^1(\Omega)},  \,\, \big\|G_\delta ( u_0 )\big\|_{L^1(\Omega)}\to \frac{1}{q} 
	\big\||u_0|^q\big\|_{L^1(\Omega)}
	\]
	as $\delta\to 0$. As a result, we obtain
	\begin{align}\label{3.25}
		\lim_{\delta\to 0}  g_\alpha* F^{(\delta)}_2(t)   =\frac{1}{q} \left(\|u(t)\|^q_{L^q(\Omega)}-\|u_0\|^q_{L^q(\Omega)} \right)\,.
	\end{align}
	Next, observe that 
	$$ 
	\big|T_\delta (v)\big|\leq |v|^{q-1}\mathbf{1}_{\left\{ |v|>\delta  \right\}} + 
	\delta^{q-1}  \mathbf{1}_{\left\{ |v|\leq \delta  \right\}}\,,  \text{ and  } T_\delta(v)\to|v|^{q-2}  v
	$$
	as $\delta\to 0$. Thus, we deduce from the dominated convergence theorem and the H\"older inequality that
	\begin{equation}\label{3.26}   
		\big\| T_\delta (u(t)) u_0 \big\|_{L^1(\Omega)}  \to   \big\|  |u|^{q-2}   u(t) u_0 \big\|_{L^1(\Omega)}  
		\,. 
	\end{equation}
	From \eqref{3.25} and \eqref{3.26}, we can pass to the limit as $\delta\to 0$ in \eqref{3.24}. Then, 
	\begin{align}\label{3.27}
		\frac{1}{q}\left(\|u (t)\|^q_{L^q(\Omega)}- \|u_0 \|^q_{L^q(\Omega)} \right)  \leq & g_\alpha* \left[  g_{1-\alpha}(t) \left(\big\| |u|^{q-2}u(t)   u_0 \big\|_{L^1(\Omega)} - \frac{1}{q} \| u_0 \|^q_{L^q(\Omega)}\right.  \right.  \nonumber
		\\
		&\left. \left. -\left(1-\frac{1}{q}\right)\|u(t)\|^q_{L^q(\Omega)}\right)  \right]\,.
	\end{align}
	Note that we cannot apply the identity  $ g_\alpha*   g_{1-\alpha}(t)=1$ in \eqref{3.27} right now since the resulted function in the bracket (multiplying with $g_{1-\alpha}(t)$) is of $t$-variable. However, this is not 
	a matter. Indeed, applying H\"older's inequality and Young's inequality  yields
	\begin{align*}
		\big\||u|^{q-2}u(t) u_0 \big\|_{L^1(\Omega)}
		\leq  \| u(t)\|^{q-1}_{L^{q}(\Omega)}  \| u_0 \|_{L^q(\Omega)} \leq \left(1-\frac{1}{q}\right)\| u(t)\|^{q}_{L^{q}(\Omega)} +\frac{1}{q} \| u_0 \|^q_{L^q(\Omega)} \,.
	\end{align*}
	With the last inequality noted, we deduce from \eqref{3.27} that
	\[\|u (t)\|^q_{L^q(\Omega)}- \|u_0 \|^q_{L^q(\Omega)}  \leq   0\,.\]
	This puts an end to the proof of Theorem \ref{The-P_R}.
\end{proof}
\begin{remark}\label{Rem3.4}  
	Note that \eqref{3.14e}-\eqref{3.14d} also hold for $1<q<2$.      
\end{remark}
\begin{remark}
	For any $\varepsilon>0$ fixed, it follows from \eqref{3.53} that  
	\[
	\|  u_\varepsilon (t) \|_X  \leq \|u_0\|_X \, \quad  \text{for   }  t\in(0,T) \,. 
	\]
	If we repeat the above argument with initial datum $u_\varepsilon(T)$ instead of $u_0$, then we obtain the existence and uniqueness of weak solution $u_\varepsilon(t)$ (not labeled) to Eq \eqref{Prob-R}  on $[T, 2T]$. By a bootstrap argument, we obtain the global existence of   $u_\varepsilon(t)\in   \mathcal{W}_{\Omega, T}$ for any $T>0$. 
\end{remark}

\section{Proof of Theorem  \ref{MainThe1}} 
For any $R>0$, let us recall $B_R$  the open ball in $\RR^N$  with center at $0$, and radius $R$. Then, Eq \eqref{Prob-R} with $\Omega=B_R$ possesses a unique weak solution $u_{\varepsilon,R}\in \mathcal{W}_{B_R,T}$ for $T>0$. Moreover,  $u_{\varepsilon,R}$ satisfies \eqref{3.53}, \eqref{3.54}. 
It suffices to pass to the limit as  $R\to \infty$, and  $\varepsilon\to 0 $ alternatively.
To do that, we need a compactness result by Rakotoson--Teman  \cite{Rak}.
\begin{theorem}\label{Thecompactness}
	Let  $(V, \|.\|_V)$, $(H,\|.\|_H) $  be two separable Hilbert spaces. Assume that $V \subset H$ with a compact and dense embedding. Consider a sequence  $\{u_\delta\}_{\delta>0}$  converging weakly to a function $u$  in $L^2(0,T; V)$, $T < \infty$. Then, $u_\delta$  converges strongly to $u$ in  $L^2(0,T;H)$ if and only if
	
	$(i)$  $u_\delta(t)$ converges to $u(t)$ weakly in $H$ for a.e. $t\in(0,T)$ ,
	
	$(ii)$ $\displaystyle\lim_{|E|\to 0,  E\subset[0,T] } \sup_{\delta>0} \int_E \|u_\delta (t)\|^2_H  \, dt = 0$.
\end{theorem}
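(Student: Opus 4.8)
Only the sufficiency part — that conditions (i) and (ii) force $u_\delta\to u$ strongly in $L^2(0,T;H)$ — is used in what follows, so I would concentrate on that; the converse is routine and I indicate it at the end. The mechanism is to convert the compactness of the embedding $V\hookrightarrow H$ into a \emph{uniform finite-rank approximation}, and then let hypothesis (ii) — uniform integrability of $\|u_\delta(\cdot)\|_H^2$ — play exactly the role that a bound on the time derivative plays in the Aubin--Lions--Simon lemma, via Vitali's convergence theorem. Concretely: since $u_\delta\rightharpoonup u$ in $L^2(0,T;V)$, Banach--Steinhaus gives $M:=\sup_\delta\|u_\delta\|_{L^2(0,T;V)}<\infty$, and weak lower semicontinuity of the norm gives $\|u\|_{L^2(0,T;V)}\le M$, so $w_\delta:=u_\delta-u$ is bounded in $L^2(0,T;V)$ by $2M$. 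Moreover, compactness of the embedding means the closed unit ball of $V$ is relatively compact in $H$; covering it by finitely many $H$-balls of radius $\eta>0$ and letting $F_\eta$ be the (finite-dimensional) span of their centres, the $H$-orthogonal projection $P_\eta$ onto $F_\eta$ satisfies
\[
  \|v-P_\eta v\|_H\le \eta\,\|v\|_V\quad(v\in V),\qquad \|P_\eta v\|_H\le\|v\|_H,\qquad P_\eta v=\sum_{j=1}^{m_\eta}\langle v,\psi_j\rangle_H\,\psi_j,
\]
for some $H$-orthonormal family $\psi_1,\dots,\psi_{m_\eta}$ (the first inequality is just homogenization of the $\eta$-net estimate, the rest is elementary Hilbert-space projection).

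\textbf{Main argument.} Split $w_\delta=(I-P_\eta)w_\delta+P_\eta w_\delta$, square, and integrate over $(0,T)$ to get
\[
  \|w_\delta\|_{L^2(0,T;H)}^2\le 2\eta^2\|w_\delta\|_{L^2(0,T;V)}^2+2\int_0^T\|P_\eta w_\delta(t)\|_H^2\,dt\le 8\eta^2M^2+2\int_0^T\|P_\eta w_\delta(t)\|_H^2\,dt.
\]
Fix $\eta$. Since $\|P_\eta w_\delta(t)\|_H^2=\sum_{j=1}^{m_\eta}|\langle w_\delta(t),\psi_j\rangle_H|^2$ and, by (i), each $\langle w_\delta(t),\psi_j\rangle_H\to0$ as $\delta\to0$ for a.e.\ $t$, we get $\|P_\eta w_\delta(t)\|_H^2\to0$ for a.e.\ $t$. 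Also $\|P_\eta w_\delta(t)\|_H^2\le\|w_\delta(t)\|_H^2\le 2\|u_\delta(t)\|_H^2+2\|u(t)\|_H^2$, and by (ii) the family $\{\|u_\delta(\cdot)\|_H^2\}_\delta$ is uniformly integrable on the finite interval $(0,T)$ while $\|u(\cdot)\|_H^2\in L^1(0,T)$, so the integrands are uniformly integrable; Vitali's theorem then gives $\int_0^T\|P_\eta w_\delta(t)\|_H^2\,dt\to0$ as $\delta\to0$. Hence $\limsup_{\delta\to0}\|w_\delta\|_{L^2(0,T;H)}^2\le 8\eta^2M^2$, and letting $\eta\to0$ finishes the sufficiency. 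For the converse: (ii) follows from $\int_E\|u_\delta\|_H^2\,dt\le 2\|u_\delta-u\|_{L^2(0,T;H)}^2+2\int_E\|u\|_H^2\,dt$ together with absolute continuity of the Lebesgue integral, and (i) follows — upon extracting the a.e.-convergent subsequence that strong $L^2(0,T;H)$ convergence provides, which suffices — from $\langle u_\delta(\cdot),\phi\rangle_H\to\langle u(\cdot),\phi\rangle_H$ in $L^2(0,T)$ for every $\phi\in H$; I would omit these details.

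\textbf{Main obstacle.} The delicate point is the complete absence of equicontinuity in time: with no control on $\partial_t u_\delta$ one cannot invoke any of the classical compactness lemmas, and (ii) is the sharp substitute. The proof must therefore be arranged so that (ii) enters only after the compact embedding has reduced matters to the finitely many scalar functions $t\mapsto\langle w_\delta(t),\psi_j\rangle_H$, each converging a.e.\ by (i), at which point Vitali applies. Equivalently, the structural subtlety is to keep $P_\eta$ an $H$-orthogonal projection — a contraction in $H$, testing only against elements of $H$, so that (i) is usable — while simultaneously controlling $I-P_\eta$ by the $V$-norm through the compactness of $V\hookrightarrow H$; the quantitative estimate $\|v-P_\eta v\|_H\le\eta\|v\|_V$ is exactly the bridge between these two requirements.
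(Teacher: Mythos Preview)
The paper does not prove this theorem at all: it is stated as a black-box tool and attributed to Rakotoson--Temam \cite{Rak}, then immediately applied to the sequence $v_{\varepsilon,R}$ with $V=H^{1-s}(B_M)$ and $H=L^2(B_M)$. So there is no ``paper's own proof'' to compare against; your task reduces to whether the argument you wrote is correct on its own terms.

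Your sufficiency argument is correct. The finite-rank approximation step is sound: relative compactness of the $V$-unit ball in $H$ gives an $\eta$-net, its span $F_\eta$ is finite-dimensional, and the $H$-orthogonal projection $P_\eta$ onto $F_\eta$ indeed satisfies $\|v-P_\eta v\|_H\le\eta\|v\|_V$ by homogeneity (in fact, since $P_\eta$ is orthogonal you even have $\|w_\delta\|_H^2=\|(I-P_\eta)w_\delta\|_H^2+\|P_\eta w_\delta\|_H^2$ exactly, so the factors of $2$ are unnecessary, but harmless). The Vitali step is also correct: hypothesis~(ii) on a finite interval is precisely uniform integrability of $\{\|u_\delta(\cdot)\|_H^2\}_\delta$, and the pointwise convergence of the finitely many scalar functions $\langle w_\delta(\cdot),\psi_j\rangle_H$ follows from~(i). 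This is essentially the argument of Rakotoson--Temam themselves.

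One small caveat on the converse: strong convergence in $L^2(0,T;H)$ only gives a.e.\ convergence of $u_\delta(t)$ along a \emph{subsequence}, so (i) for the full family does not follow in general from the strong limit alone. You flag this yourself (``which suffices''), and since only the sufficiency direction is used in the paper this is immaterial here, but it is worth being explicit that the necessity of (i) as stated for the full family requires an additional argument or a weaker formulation.
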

Set $v_{\varepsilon,R}=|u_{\varepsilon,R}|^{\theta_q-1} u_{\varepsilon,R}$  for $\varepsilon, R>0$.
Then, for $q>1$, it follows from \eqref{3.14d}, and Remark \ref{Rem3.4} that	
\begin{equation}\label{3.28}
	\int^T_0 \| v_{\varepsilon,R} \|^2_{\dot{H}^{1-s}(\Omega)}   \, dt \leq C  T^{1-\alpha}   \|u_0\|^q_{L^q(\RR^N)} \,, 
\end{equation} 
where $C=C(m, q, \alpha)>0$.
\\
By \eqref{3.28}, \eqref{3.53}, and the fact $\theta_q>1$, we find that for a given $M>0$, $v_{\varepsilon,R}$ is uniformly bounded in  $L^2\big(0,T; H^{1-s}(B_M)\big)$ w.r.t $\varepsilon, R>0$. 
It is of course that  we shall apply Theorem \ref{Thecompactness} to  sequence $\{v_{\varepsilon,R}\}$, $V=H^{1-s}(B_M)$, and $H=L^2(B_M)$.
\vspace{0.1in}
\\
{\bf Passing to the limit as $R\to\infty$.}
By the uniform bound of  $v_{\varepsilon,R}$ in  $L^2\big(0,T; V\big)$, there exists a subsequence of $\{v_{\varepsilon,R}\}$ (not labelled),  converging  weakly to $v_\varepsilon$ in $L^2(0,T; V)$ as $R\to\infty$.  Thus, it remains to show that $v_{\varepsilon, R}$ satisfies $(ii)$ in Theorem \ref{Thecompactness}.
\\
By  \eqref{3.53},   for any measurable set $E\subset [0,T]$, we have 
\begin{align*}
	\int_E \|v_{\varepsilon, R} (t)\|^2_{L^2(B_M)} \, dt 
	=  \int_E \big\||u_{\varepsilon, R}|^{\theta_q-1}  u_{\varepsilon, R} \big\|^2_{L^2(B_M)}  \, dt  
	\leq \int_E \|u_0 \|^{2\theta_q}_{L^{2\theta_q}   (\RR^N)}  \, dt  \leq |E|   \|u_0 \|^{2\theta_q}_{L^{2\theta_q} (\RR^N)}   \,,
\end{align*}
which implies that 
\[  \lim_{|E|\to 0} \sup_{R>0}  \int_E \|v_{\varepsilon, R} (t)\|^2_{L^2(B_M)}dt =0\,.  \]
As a result, $v_{\varepsilon, R}$ converges strongly to $v_\varepsilon$  in $L^2\big(B_M\times(0,T)\big)$ as $R\to\infty$. 
\\
Next, we claim that  $u_{\varepsilon,R}$ converges to $u_\varepsilon:=  \big|v_\varepsilon\big|^{\frac{1}{\theta_q}-1} v_\varepsilon$ in $L^1\big(B_M\times(0,T)\big)$.  
\\ 
Using the fundamental inequality 
\[\left||a|^{\alpha\beta-1}a-|b|^{\alpha\beta-1}b\right|\leq 2^{1-\beta}\left||a|^{\alpha-1}a-|b|^{\alpha-1}b\right|^\beta  \,,\]
for all $a,b\in\RR, \alpha>0$ and $\beta\in (0,1)$, and by H\"older's inequality, we get
\begin{align*}
	\int^T_0\int_{B_M}|u_{\varepsilon,R}-u_\varepsilon|\, dx\, dt
	&\lesssim \int^T_0\int_{B_M}\left||u_{\varepsilon,R}|^{\theta_q-1}u_{\varepsilon,R}-|u_\varepsilon|^{\theta_q-1}u_\varepsilon 
	\right|^{\frac{1}{\theta_q}}\, dx dt  \quad (\alpha=\theta_q, \, \beta=\frac{1}{\theta_q})
	\\
	&\lesssim  (T|B_M|)^{1-\frac{1}{2\theta_q}}\left(\int^T_0\int_{B_M}\left||u_{\varepsilon,R}|^{\theta_q-1}u_{\varepsilon,R}-|u_\varepsilon|^{\theta_q-1}u_\varepsilon \right|^2\, dx\, dt\right)^\frac{1}{2\theta_q}\\
	&=  (T|B_M|)^{1-\frac{1}{2\theta_q}}\|v_{\varepsilon,R}-v_\varepsilon\|^\frac{1}{\theta_q}_{L^2(B_M\times (0,T))}  \,.
\end{align*}
This yields the claim. 
\\
Furthermore, by  the $L^\infty$-bound of $u$ in  \eqref{3.53}, we deduce that
\begin{equation}\label{3.33}
	u_{\varepsilon,R}  \to u_\varepsilon \quad \text{in   }  L^p(B_M\times (0,T)) , \, \, 1\leq p<\infty\,.
\end{equation}
as $R\to\infty$.
\\
Since our argument is independent of $M$ and by \eqref{3.53} again, then we find that $u_{\varepsilon, R}$ converges to $u_\varepsilon$ in $L^p(Q_T)$  up to a subsequence.  
\\
To pass to the limit $R\to\infty$,  we write Eq \eqref{Prob-R} under the weak sense as in Definition \ref{Def1}.  That is 
\begin{align}\label{3.42}
	\int_{Q_T}  \left( \varphi_t  [g_{1-\alpha}*(u_{\varepsilon, R}-u_0)] -\varepsilon  \nabla u_{\varepsilon, R}  \cdot \nabla\varphi   \right)\,  dx dt   =
	\int_{Q_T}  |u_{\varepsilon, R}|^{m}  \nabla (-\Delta)^{-s} [u_{\varepsilon, R}] \cdot  \nabla   \varphi \, dxdt  \,,
\end{align}
for any test function $\varphi  \in H^1 \big(0,T; L^2(\RR^N)\big)  \cap   L^2\big(0,T; H^1(\RR^N)\big)$, $\varphi(T)=0$.
\\
Then, we first show that 
\begin{equation}\label{3.43}
	\lim_{R\to\infty}\int_{Q_T}  \left( |u_{\varepsilon, R}|^{m}  \nabla (-\Delta)^{-s} [u_{\varepsilon, R}] - |u_{\varepsilon}|^{m}\nabla (-\Delta)^{-s}[u_{\varepsilon} ]\right) \cdot  \nabla   \varphi \, dxdt = 0\,.
\end{equation}
Indeed, we rewrite   
\begin{align*}
	&\int_{Q_T}  \left( |u_{\varepsilon, R}|^{m}  \nabla (-\Delta)^{-s} [u_{\varepsilon, R}] - |u_{\varepsilon}|^{m}\nabla (-\Delta)^{-s}[u_{\varepsilon} ]\right) \cdot  \nabla   \varphi \, dx dt  \\
	&=	 \int_{Q_T}  |u_{\varepsilon, R}|^{m} \nabla (-\Delta)^{-s}  [u_{\varepsilon, R}-u_\varepsilon]\cdot\nabla\varphi  \,  dxdt 
	+  \int_{Q_T}  \big( |u_{\varepsilon, R}|^{m}  -|u_\varepsilon|^{m} \big) \nabla (-\Delta)^{-s} [u_\varepsilon] \cdot \nabla\varphi \, dxdt
	\\
	&:= A_1 + A_2  \,.
\end{align*}
To obtain  \eqref{3.43}, it suffices to show that $A_1, A_2\rightarrow  0 $ as $R\rightarrow  \infty$.
\\
For $A_1$, we show that   \begin{equation}\label{3.34}
	\nabla (-\Delta)^{-s}[u_{\varepsilon, R}]  \to  \nabla (-\Delta)^{-s} [u_\varepsilon] \, \text{ in }  L^2(Q_T)  \,.
\end{equation}
Indeed, it follows from \eqref{3.40c} and \eqref{3.53} that
\begin{align*}
	\big\| \nabla (-\Delta)^{-s} [u_{\varepsilon, R} - u_\varepsilon ] (t) \big\|^2_{L^2(\RR^N)} &\lesssim \big\|[u_{\varepsilon, R} - u_\varepsilon](t)\big\|^{2}_{L^{2_s} (\RR^N)}
	\\
	&\lesssim  \|u_0\|_{L^{2_s}(\RR^N)}^{2 -2_s}\big\|[u_{\varepsilon, R} - u_\varepsilon](t)\big\|^{2_s}_{L^{2_s}(\RR^N)} \,.
\end{align*}
Recall here $2_s=\frac{2N}{N+2(2s-1)}\in (1,2]$.
\\
By integrating both sides of the last inequality  on $(0,T)$, we obtain 
\begin{align}\label{3.35}
	\big\| \nabla (-\Delta)^{-s} [u_{\varepsilon, R}-u_\varepsilon]\big\|^2_{L^2(Q_T)} \lesssim \|u_0\|_{L^{2_s}(\RR^N)}^{2-2_s} \big\|u_{\varepsilon, R}-u_\varepsilon\big\|^{2_s}_{L^{2_s}(Q_T)} \,.
\end{align}
Letting $R\to \infty$ in \eqref{3.35}  yields \eqref{3.34}.
\\
Thanks to the H\"older inequality, \eqref{3.53},    and  \eqref{3.34},  we have
\begin{align*}
	|A_1| &\leq \|u_{\varepsilon,R}\|^{m}_{L^\infty(Q_T)}  \big\| \nabla (-\Delta)^{-s}[u_{\varepsilon, R} - u_\varepsilon]\big\|_{L^2(Q_T)} 
	\|\nabla\varphi\|_{L^2(Q_T)}   
	\\
	&\leq \|u_0\|^{m}_{L^\infty(\RR^N)} \big\| \nabla (-\Delta)^{-s}[u_{\varepsilon, R} - u_\varepsilon]\big\|_{L^2(Q_T)}
	\|\nabla\varphi\|_{L^2(Q_T)}\,.
\end{align*} 
Hence, one obtains $A_1\to 0$ as $R\to \infty$.
\\
Concerning $A_2$, observe that $u_{\varepsilon,R} \to u_\varepsilon$ for a.e. $(x,t)\in Q_T$  (up to a subsequence if necessary). In addition,  we deduce from \eqref{3.53} that    
\[  \left| \big( |u_{\varepsilon,R}|^{m}  -|u_\varepsilon|^{m} \big) \nabla (-\Delta)^{-s}[u_\varepsilon] \cdot \nabla\varphi  \right|  \leq 2\|u_0\|^{m}_{L^\infty(\RR^N)}  \left| \nabla (-\Delta)^{-s} [u_\varepsilon] \cdot \nabla\varphi\right| \,.\]
Note that  $\nabla (-\Delta)^{-s}[u_\varepsilon] \cdot \nabla\varphi$ is integrable on $Q_T$ according to \eqref{3.35}. By applying the dominated convergence theorem, we find that  $A_2\to 0$, as $R\to \infty$.
\\
In summary, we obtain \eqref{3.43}.
\\
Finally, observe that   the integral  
\[
\int_{Q_T} \nabla u_{\varepsilon,R}  \cdot  \nabla \varphi  \, dxdt  \to \int_{Q_T} \nabla u_\varepsilon  \cdot  \nabla \varphi  \, dxdt \]  as $R\to \infty$ since \eqref{3.54}.
\\
Hence, we obtain a global existence of weak solution $u_\varepsilon$ in $\RR^N\times(0,\infty)$ to Eq \eqref{P_epsilon}.
\begin{remark}
	Observe that $u_\varepsilon$ satisfies \eqref{1.10} for $q\in[1,\infty]$, and  satisfies  \eqref{1.11}, \eqref{3.14e}-\eqref{3.14d} for $q>1$. 
\end{remark}
\begin{remark}
	Note that $u_\varepsilon$ is a unique weak solution of Eq \eqref{P_epsilon}. Indeed, if $v_\varepsilon\in L^\infty\big(0,T; X\big)$ is another weak solution, then  $v_\varepsilon$  is also a mild solution to Eq \eqref{P_epsilon}. By the uniqueness of mild solutions in the class  $L^\infty\big(0,T; X\big)$, we deduce that $u_\varepsilon(x,t)=v_\varepsilon(x,t)$ for a.e. $(x,t)\in Q_T$, $T>0$. 
\end{remark}
\textbf{Passing to the limit as $\varepsilon\to 0$.} 
Since our argument above is independent of $\varepsilon$, then we just repeat the above proof for $u_\varepsilon$. Thus, we find that
there is a subsequence of $\{u_\varepsilon\}$ (not labeled) such that 
$u_\varepsilon\to u$ in $L^q(Q_T)$ for $1\leq q<\infty$.
\\
And, we also have as in \eqref{3.43} that 
\begin{equation}\label{3.431}
	\lim_{\varepsilon\to 0}	\int_{Q_T}  \left( |u_\varepsilon|^{m}  \nabla (-\Delta)^{-s} [u_\varepsilon] - |u|^{m}\nabla (-\Delta)^{-s} [u] \right) \cdot  \nabla   \varphi \, dx dt =  0
\end{equation}
for any test function $\varphi  \in H^1 \big(0,T; L^2(\RR^N)\big)  \cap   L^2\big(0,T; H^1(\RR^N)\big)$, $\varphi(T)=0$.
\\
Finally,  we can verify easily that 
$$
\varepsilon \int_{Q_T} \nabla u_\varepsilon  \cdot  \nabla \varphi  \, dxdt  \to  0
$$  as $\varepsilon\to 0$ since $\sqrt{\varepsilon} \nabla  u_\varepsilon$ is uniformly bounded in $L^2(Q_T)$  w.r.t  $\varepsilon>0$, see \eqref{3.54}.    \\
Hence, we obtain the existence of weak solution $u$ of Eq \eqref{1}.  
\\
In addition, one can verify easily that   \eqref{1.10} also holds true for $u$,  and  $u$  satisfies  \eqref{1.11}, \eqref{3.14c}, \eqref{3.14d} for all $q>1$.
\\
This puts an end to the proof of Theorem \ref{MainThe1}.
\begin{remark}\label{Rem4.2}
	From \eqref{3.40b}, \eqref{3.40c}, and \eqref{1.10}, for any $R>0$ we have 
	\begin{equation}\label{4.25}
		\sup_{t>0}	\| \operatorname{div} \Theta (u) (t) \|_{H^{-1}(B_R)}
		\lesssim \|u_0\|^{m+1}_{L^\infty(\RR^N)} \,.
	\end{equation} 
	Thus, we find that  $\partial_t^\alpha  u =  \operatorname{div} \Theta (u) \in L^\infty\big(0,T; H^{-1}(B_R)\big)$. 
	As a result, one has  $u\in\mathcal{C}^\alpha \big([0,T]; H^{-1}(B_R)\big)$. Therefore, $u$ possesses an initial trace $u_0$ in  $H^{-1}(B_R)$ for any $R>0$. 
\end{remark}


\section{Decay estimates of solutions}  
In this part, we study some decay estimates of the solution $u$. 
\begin{proof}[Proof of Theorem \ref{MainThe2}]  
	From \eqref{3.14d}, we obtain that 
	\begin{align}\label{4.2}
		\frac{q (q-1)}{ \theta_q^2 }  \int^t_\tau  \big\||u|^{\theta_q-1}u (\zeta)\big\|^2_{\dot{H}^{1-s}(\RR^N)}\,d\zeta
		\leq C_\alpha  (t-\tau)^{1-\alpha} \| u (\tau) \|^q_{L^q(\RR^N)}  
	\end{align}
	for $0<\tau<t<\infty$.
	\\
	Recall here $\theta_q =\frac{m+q}{2}$ for convenience.  By the Sobolev embedding, we have
	\begin{align}\label{4.3}
		\big\|u(t)\big\|^{\theta_q}_{L^{2^\star  \theta_q }(\RR^N)}  =  \big\||u|^{\theta_q-1}u  (t)\big\|_{L^{2^\star}(\RR^N)} \leq 
		C\big\||u|^{\theta_q-1}u (t)\big\|_{\dot{H}^{1-s}(\RR^N)}\,,
	\end{align}
	for $0<t<\infty$,
	with  $C=C(N,s)$, and  $2^\star=\frac{2N}{N-2(1-s)}$.
	\\
	Since $\|u(t)\|_{L^{q}(\RR^N)}$ is nonincreasing  w.r.t $t>0$, then 
	we deduce from \eqref{4.2} and \eqref{4.3} that
	\begin{align*}
		\frac{q(q-1)}{\theta^2_q} (t-\tau)   \|u(t)\|^{2\theta_q}_{L^{2^\star  
				\theta_q }(\RR^N)} \leq C(t-\tau)^{1-\alpha} \|u(\tau)\|^q_{L^q(\RR^N)}\,,
	\end{align*}
	for   $0<\tau<t<\infty$, where $C=C(\alpha, N, s)>0$.
	\\
	Hence,
	\begin{align}\label{4.4}
		\|u(t)\|^{2\theta_q}_{L^{2^\star \theta_q}(\RR^N)} \leq  C \frac{\theta^2_q}{q(q-1)}  (t-\tau)^{-\alpha} \|u(\tau)\|^q_{L^q(\RR^N)}  , \quad \text{for }  0<\tau<t<\infty\,.
	\end{align}
	Since we are going to use an iteration method starting from here, then it is important to note that the constant $C$ in \eqref{4.4} will not change step by step.
	\\
	Put
	\[
	t_n=t(1-2^{-n}), \quad q_{n+1}=  2^\star  \theta_n, \quad \theta_n=\frac{m+q_n}{2}, \,n\geq  0, 
	\] 
	where $q_0=q$,  $\theta_0=\theta_q$. 
	\\
	At the moment, we apply	\eqref{4.4}  to $t=t_{n+1}$, $\tau=t_n$, and  $q=q_n$. Then,
	\[
	\| u(t_{n+1}) \|^{ \lambda_0 q_{n+1} }_{L^{q_{n+1}}}    \leq C \frac{\theta^2_{q_n}}{q_n (q_n-1)  }  (t_{n+1}-t_n)^{-\alpha}   \| u(t_n)\|^{q_n}_{L^{q_n}}  \,  ,  \, \lambda_0 =\frac{N-2(1-s)}{N}  \,.
	\] 
	By induction, we get
	\begin{align}\label{4.6}
		\| u(t_{n+1}) \|_{L^{q_{n+1}} (\RR^N) } &\leq 
		\left[C\frac{\theta^2_{q_n}}{ q_n (q_n-1) }\right]^\frac{1}{\lambda_0 q_{n+1}}  
		\left[C\frac{\theta^2_{q_{n-1}}}{q_{n-1}(q_{n-1}-1)  }\right]^\frac{1}{\lambda^{2}_0 q_{n+1}}
		\dots \left[C\frac{\theta^2_{q_0}}{q_0(q_{0}-1)  }\right]^\frac{1}{\lambda^{n+1}_0 q_{n+1}}  \nonumber
		\\
		&\times  \left( t^{-\alpha} 2^{n+1} \right)^\frac{1}{\lambda_0 q_{n+1}}    \left( t^{-\alpha} 2^{n} \right)^\frac{1}{\lambda_0^{2} q_{n+1} } \dots \left( t^{-\alpha} 2 \right)^\frac{1}{\lambda_0^{n+1}  q_{n+1} }
		\times  \| u_0\|^\frac{q_0}{\lambda_0^{n+1}  q_{n+1}}_{L^{q_0}}\,.
	\end{align}
	It is not difficult to verify that
	\begin{equation}\label{4.6a}
		\lim_{n\rightarrow  \infty}  \lambda_0^{n+1} q_{n+1}  =  q_0+ \frac{m}{1-\lambda_0}.
	\end{equation}
	Next, by using \eqref{4.6a}, we obtain
	\begin{align}\label{4.7}
		\lim_{n\rightarrow  \infty} \left( t^{-\alpha} \right)^\frac{1}{\lambda_0 q_{n+1}}  \left( t^{-\alpha}  \right)^\frac{1}{\lambda_0^{2} q_{n+1} } \dots \left( t^{-\alpha} \right)^\frac{1}{\lambda_0^{n+1}  q_{n+1} }&= \lim_{n\rightarrow  \infty}  
		t^{ -\frac{\alpha}{\lambda_0 q_{n+1}} \displaystyle \sum^{n}_{j=0} \lambda_0^{-j}}  \nonumber
		\\
		&=  \lim_{n\rightarrow  \infty} t^{ -\frac{\alpha}{\lambda_0 q_{n+1}}  
			\frac{1-\lambda^{-(n+1)}_0 }{1-\lambda^{-1}_0 }} \nonumber
		\\
		&= t^{-\frac{\alpha}{q_0(1-\lambda_0)  +m}}  \,.
	\end{align}
	Similarly, we also have
	\begin{equation}\label{4.8}
		\lim_{n\rightarrow  \infty} C^\frac{1}{\lambda_0 q_{n+1}}   C^\frac{1}{\lambda_0^{2} q_{n+1} } \dots C^\frac{1}{\lambda^{n+1}_0 q_{n+1} } = C^{\frac{1}{q_0(1-\lambda_0)  + m}}\,,
	\end{equation}
	and 
	\begin{align}\label{4.9}
		\lim_{n\rightarrow  \infty} (2^{n+1})^\frac{1}{\lambda_0  q_{n+1}}   (2^n)^\frac{1}{\lambda^{2}_0 q_{n+1} }\dots 2^\frac{1}{\lambda^{n+1}_0 q_{n+1} } =  2^{ \frac{1} { (1-\lambda_0)\left(q_0(1-\lambda_0) + m\right)} }\,. 
	\end{align}
	Next, we put  
	\[
	Z_n  = q_n^\frac{1}{\lambda_0 q_{n+1}}   q_{n-1}^\frac{1}{\lambda^{2}_0 q_{n+1} } \dots q_0^\frac{1}{\lambda^{n+1}_0 q_{n+1} }\,.
	\]
	Let us show that $Z_n$ is convergent as $n\rightarrow \infty$. Indeed, we consider a power series
	\begin{align}\label{4.10}
		S_n (s) = s^n \ln q_n  +   s^{n-1} \ln q_{n-1} +\dots + s^{1} \ln q_{1} + \ln q_0 \,.
	\end{align}
	Obviously, the radius of convergence of $S_n(s)$ is $1$. Thus, $S_n (\lambda_0) $  converges absolutely to a real number $S_0$ as $n\rightarrow  \infty$.  
	On the other hand, we note that
	\[\lambda^{n+1}_0 q_{n+1} \ln Z_n = S_n (\lambda_0)  \,.\]
	Then,
	\begin{equation}\label{4.11}		\lim_{n\rightarrow\infty} Z_n  =\exp  \left\{\frac{S_0}{q_0+\frac{m}{1-\lambda_0}} \right\}\,.\end{equation}
	By an analogue as in the proof of \eqref{4.11},  there are  two real numbers $S_1, S_2>0$ such that 
	\begin{equation}\label{4.12}
		\lim_{n\rightarrow\infty}  (q_n-1)^\frac{1}{\lambda_0 q_{n+1}}  (q_{n-1}-1)^\frac{1}{\lambda^{2}_0 q_{n+1} } \dots (q_0-1)^\frac{1}{\lambda^{n+1}_0 q_{n+1}}  = S_1  \,,
	\end{equation}
	and
	\begin{equation}\label{4.14}
		\lim_{n\rightarrow\infty}  \theta_{q_n}^\frac{2}{\lambda_0 q_{n+1}}   \theta_{q_{n-1}}^\frac{2}{\lambda^{2}_0 q_{n+1}} \dots \theta_{q_0}^\frac{2}{\lambda^{n+1}_0 q_{n+1}}  = S_2 \,.
	\end{equation}		
	By inserting \eqref{4.7}, \eqref{4.8}, \eqref{4.9}, \eqref{4.11}, \eqref{4.12}, and \eqref{4.14} into \eqref{4.6}, we find that  
	\begin{equation*}
		\|u(t)\|_{L^\infty(\RR^N)} \leq C t^{-\frac{\alpha}{q(1-\lambda_0)+m}} \| u_0 \|^{\frac{q(1-\lambda_0)}{ q (1-\lambda_0)+ m }}_{L^{q}(\RR^N)} \,,
	\end{equation*}
	with $C=C(\alpha,s,N, m, q)>0$.
	This yields \eqref{1.5}.
	\\
	Thus, we get the proof of Theorem \ref{MainThe2}.
\end{proof}
Finally, we prove Theorem \ref{MainThe3}.
\begin{proof}[Proof of Theorem \ref{MainThe3}]  
	In the beginning, we can assume that $u_0 \in \mathcal{C}^\infty_c(\RR^N)$. As soon as we establish \eqref{1.7a} for such initial data, then we can utilize \eqref{1.7a} as a priori estimate in order to get the existence of the weak solution with initial data $u_0$  in $ L^1(\RR^N) \cap L^{q_0}(\RR^N)$, $q_0>1$.
	\\
	Now, we prove the $L^1-L^p$ decay estimate of weak solutions of Eq \eqref{1} for $1<p<\infty$  with regular initial data. 
	It follows from the interpolation inequality, and \eqref{1.10} that 
	\begin{align*}
		\| u(t) \|_{L^{p}(\RR^N)}  & \leq \| u(t) \|^{1-\frac{1}{p}}_{L^{\infty}(\RR^N)} \| u(t) \|^{\frac{1}{p}}_{L^{1}(\RR^N)}   \leq \| u(t) \|^{1-\frac{1}{p}}_{L^{\infty}(\RR^N)} \| u_0 \|^{\frac{1}{p}}_{L^{1}(\RR^N)}  \,.
	\end{align*}
	Next, applying \eqref{1.5} in Theorem \ref{MainThe2} to $q=q_0$ yields
	\[\| u(t) \|_{L^{\infty}(\RR^N)}  \lesssim   (t-t/2)^{-\frac{\alpha}{q_0(1-\lambda_0)+m}} \|u(t/2)\|^{\frac{q_0(1-\lambda_0)}{ q_0 (1-\lambda_0)+ m }}_{L^{q_0}(\RR^N)} \lesssim t^{-\frac{\alpha}{q_0(1-\lambda_0)+m}} \|u_0\|^{\frac{q_0(1-\lambda_0)}{ q_0 (1-\lambda_0)+ m }}_{L^{q_0}(\RR^N)} \,.   \]
	By combining the two indicated inequalities, we obtain 
	\begin{equation}\label{4.16}
		\| u(t) \|_{L^{p}(\RR^N)}  \lesssim  t^{-\frac{\alpha\left(1-\frac{1}{p}\right)}{q_0(1-\lambda_0)+m}} 
		\|u_0\|^{\frac{q_0(1-\lambda_0) \left(1-\frac{1}{p}\right)}{ q_0 (1-\lambda_0)+ m }}_{L^{q_0}(\RR^N)}
		\| u_0 \|^{\frac{1}{p}}_{L^{1}(\RR^N)} \,.
	\end{equation}
	
	It remains to prove the existence of weak solutions to Eq \eqref{1}  for $u_0\in  L^1(\RR^N)\cap L^{q_0}(\RR^N)$. Indeed, let   $u_{0,\delta}  =  u_0  *\varrho_{\delta}$, $\delta>0$, where $\{\varrho_\delta\}$  is the standard sequence of mollifier functions. Obviously, we have  
	$u_{0,\delta}\in  X$,   $\|u_{0,\delta}\|_{L^q(\RR^N)}\leq 
	\|u_{0}\|_{L^q(\RR^N)}$  
	for all $\delta>0$, and $u_{0,\delta}\to u_0$  in $L^q(\RR^N)$,  $1\leq q\leq q_0$ as $\delta\to 0$.
	\\
	Next, for any  $\delta>0$, let $u_\delta$ be the  solution of Eq \eqref{1}, constructed as in Theorem \ref{MainThe1} with initial datum $u_{0,\delta}$ .  
	\\
	Then, applying \eqref{1.5} to  $u_\delta$ yields 
	\[ \|u_\delta(t)\|_{L^\infty(\RR^N)} \leq C t^{-\frac{\alpha}{q_0(1-\lambda_0)+m}} \| u_0 \|^{\frac{q_0(1-\lambda_0)}{q_0 (1-\lambda_0)+ m }}_{L^{q_0}(\RR^N)}  ,\quad t\in(0,\infty)  \,. \]
	This implies that for any $\tau>0$ fixed, $\|u_\delta(\cdot,t)\|_X$ is uniformly bounded in $\RR^N\times(\tau,\infty)$  w.r.t  $\delta>0$.
	Since our proof of the existence in Theorem \ref{MainThe1} is local in time, then we can mimic it in order to obtain
	$u_\delta\to u$  in $L^p(\RR^N\times[\tau, T])$  $1\leq p<\infty$   up to a subsequence for $0<\tau<T<\infty$.
	\\
	As a result, one has $u_\delta(x,t)\to u(x,t)$ for a.e. $(x,t)\in \RR^N\times (0, \infty)$ by the diagonal argument up to a subsequence (not labeled). Furthermore, it is not difficult to verify that  
	\[ 
	\lim_{\delta\to 0}	\int_{Q_T}  \left( |u_\delta|^{m}  \nabla (-\Delta)^{-s} u_\delta - |u|^{m}\nabla (-\Delta)^{-s}u \right) \cdot  \nabla   \varphi \, dxdt =  0, \quad \forall \varphi\in\mathcal{C}^\infty_c(Q_T)\,.
	\]
	Thus, we conclude that $u$ is a weak solution of Eq \eqref{1}. Furthermore, $u$ also satisfies \eqref{1.10} for $q\in [1,q_0]$.
	\\
	This puts an end to the proof of Theorem \ref{MainThe3}.
\end{proof}
\begin{remark}\label{Rem5.4} We can obtain another the $L^{q_0}$-estimate by using the comparison result by the authors in \cite[Lemma 2.6]{Zac3}. 
	\\
	Indeed, for any $q>q_0$, we deduce from the interpolation inequality  in Proposition \ref{Pro8}, and \eqref{1.10} that
	\begin{align*}
		\| u(t) \|_{L^{q_0}(\RR^N)} \leq \|u(t)\|^{\sigma}_{L^{1}(\RR^N)} \| u(t) \|^{
			1-\sigma}_{L^{q}(\RR^N)}\leq \|u_0\|^{\sigma}_{L^{1}(\RR^N)} \| u(t) \|^{
			1-\sigma}_{L^{q}(\RR^N)} \,,
	\end{align*}
	with $\sigma= \frac{\frac{1}{q_0}  -\frac{1}{q}}{1-\frac{1}{q}} \in (0,1)$.
	\\
	Therefore,
	\begin{equation*}\label{4.20}
		\| u(t) \|_{L^{q}(\RR^N)} \geq  \|u_0\|^{-\frac{\sigma}{1-\sigma}}_{L^{1}(\RR^N)} \| u(t) \|^{\frac{1}{1-\sigma}}_{L^{q_0}(\RR^N)}  \,.
	\end{equation*}
	With the last inequality noted,  it follows from \eqref{3.14c}, and  \eqref{4.3} with $q=2^\star \theta_{q_0}$  that
	\begin{align}\label{4.15}
		& g_{1-\alpha}*\left( \|u(t)\|^{q_0}_{L^{q_0}(\RR^N)} -\| u_0 \|^{q_0}_{L^{q_0}(\RR^N)}\right)   +  C \left(\|u_0\|^{-\frac{\sigma}{1-\sigma}}_{L^{1}(\RR^N)} \| u(t) \|^{\frac{1}{1-\sigma}}_{L^{q_0}(\RR^N)}\right)^{2\theta_{q_0}}    \nonumber
		\\
		& \leq  g_{1-\alpha}*\left( \|u(t)\|^{q_0}_{L^{q_0}(\RR^N)} -\| u_0 \|^{q_0}_{L^{q_0}(\RR^N)}\right)+  C  \left\|u(t)\right\|^{2\theta_{q_0}}_{L^{q}(\RR^N)}  \leq 0\,,
	\end{align}
	where $C=C(q_0,N,m,s)>0$.
	\\
	Then, it suffices to apply \cite[Theorem 7.1]{Zac3}  to  $\|u(t)\|^{q_0}_{L^{q_0}(\RR^N)}$ with 
	\[ 
	\gamma = \frac{2\theta_{q_0}}{q_0(1-\sigma)}= \frac{m+q_0-\lambda_0}{q_0-1}  , \,  \text{ and }  \nu=  C \|u_0\|^{-\frac{2 
			\sigma \theta_{q_0}}{1-\sigma}}_{L^{1}(\RR^N)} \,.
	\]
	Thus,  we get
	\begin{equation}\label{5.21}
		\|u(t)\|_{L^{q_0}(\RR^N)}  \leq  \left(\frac{C'}{1+t^\frac{\alpha}{\gamma}} \right)^\frac{1}{q_0}  \lesssim  t^{-\frac{\alpha}{q_0\gamma}} ,\quad \text{for } t>0   \,, 
	\end{equation} 
	where $C'$ depends on $\|u_0\|_{L^{1}(\RR^N)}$,  $\|u_0\|_{L^{q_0}(\RR^N)} $, and the parameters involved.
	\\
	A straightforward computation shows that
	\[
	C' =   \|u_0\|^{q_0}_{L^{q_0}(\RR^N)}   t^{\alpha/\gamma}_0  ,\quad t^\alpha_0 = C''(\alpha,\gamma)  \frac{\|u_0\|^{{q_0}(1-\gamma)}_{L^{q_0}(\RR^N)}}{\nu} \,.
	\]
	It is clear that  the $L^{q_0}$-estimate in \eqref{1.7a} is better than its version in \eqref{5.21} when $t$ is large since $\frac{\alpha\left(1-\frac{1}{p}\right)}{q_0(1-\lambda_0)+m}>\frac{\alpha}{\gamma q_0}$.
\end{remark}
\begin{acknowledgement}   There is no conflict interest. 
	This research is funded by University of Economics Ho Chi Minh City (UEH), Vietnam.
\end{acknowledgement}



\begin{thebibliography}{99}
	
	
	\bibitem{Allen1} M. Allen, Uniqueness for weak solutions of parabolic equations with a fractional time derivative, New developments in the analysis of nonlocal operators,  Contemp. Math. {\bf723} (2019), 137-148.  https://arxiv.org/pdf/1705.03959.pdf
	
	\bibitem{Caf1.0} M. Allen, L. Caffarelli,  A. Vasseur, A parabolic problem with a fractional-time derivative, Arch. Ration. Mech. Anal. \textbf{221} (2016),  603-630.
	
	\bibitem{Caf2.0}M. Allen, L. Caffarelli,  A. Vasseur, Porous medium flow with both a fractional potential pressure and fractional time derivative, Chin. Ann. Math. Ser. B \textbf{38} (2017), 45-82.   
	
	
	
	
	
	
	\bibitem{BiImKa} P. Biler, C. Imbert, G. Karch,  The nonlocal porous medium equation: Barenblatt profiles and other weak solutions, Arch. Ration. Mech. Anal. {\bf 215} (2015), 497-529.
	
	\bibitem{Ca-So-va} Caffarelli, L.A., Soria, F., Vázquez, J.L.: Regularity of solutions of the fractional porous medium flow. J. Eur. Math. Soc. {\bf 15}, 1701-1746, 2013.
	
	\bibitem{Ca-va} Caffarelli, L.A., Vázquez, J.L.: Nonlinear porous medium flow with fractional potential pressure. Arch. Ration. Mech. Anal. {\bf202}, 537-565, 2011. 
	
	\bibitem{Ca-va2011} Caffarelli, L.A.,V\'azquez, J.L.: Asymptotic behaviour of a porous medium equation with fractional diffusion. Discrete Contin. Dyn. Syst. A. {\bf29}, 1393-1404, 2011. 
	
	\bibitem{Cap1999}  {M. Caputo}, Diffusion of fluids in porous media with memory, Geothermics {\bf 28} (1999), 113-130.
	
	
	
	\bibitem{Co-Ca}{A. Compte, M. O. C\'aceres}, Fractional dynamics in random velocity fields, Phys. Rev. Lett.  {\bf81} (1998), 3140-3143. 
	
	
	
	
	
	\bibitem{Anh1} N. A. Dao, J. I. D\'iaz, Energy and Large Time Estimates for Nonlinear Porous Medium Flow with Nonlocal Pressure in $\mathbb{R}^N$, Arch. Ration. Mech. Anal. \textbf{238} (2020), 299-345.
	
	\bibitem{Dji-Ni-Ar} {J.-D. Djida, J. J. Nieto, I. Area}, Nonlocal time-porous medium equation: Weak solutions and finite speed of propagation, Discrete and Continuous Dynamical Systems-B  {\bf 24}  (2019), 4031-4053. doi: 10.3934/dcdsb.2019049
	
	
	\bibitem{Ko2004-book} S. D. Eidelman, S. D. Ivasyshen, A. N. Kochubei, Analytic Methods in
	the Theory of Differential and Pseudo-Differential Equations of Parabolic
	Type. Birkh\"auser, Basel (2004).
	
	
	\bibitem{Ei-Ko} S. D. Eidelman, A. N. Kochubei,  Cauchy problem for fractional diffusion equations, J. Differential Equations {\bf 199} (2004), 211-255.
	
	
	\bibitem{Giga}{Y. Giga, T. Namba}, Well-posedness of Hamilton--Jacobi equations with Caputo's time-fractional derivative, Comm. PDEs. \textbf{42} (2017), 1088-1120.
	
	
	\bibitem{Go-Ma-Mo-Pa}  R. Gorenflo, F. Mainardi, D. Moretti, P. Paradisi, Time fractional diffusion: a discrete random walk approach, Nonlinear Dynamics {\bf29} (2002), 129-143.
	
	
	
	
	\bibitem{Gri1} G. Gripenberg, S.-O. Londen, O. Staffans, Volterra Integral and Functional Equations, Encyclopedia Math. Appl., vol. 34, Cambridge University Press, Cambridge, 1990.
	
	\bibitem{Ke-Si-ve-Za} J. Kemppainen, J. Siljander, V. Vergara,  R. Zacher, Decay estimates for time-fractional and other non-local in time subdiffusion equations in $\RR^d$, Math. Ann. \textbf{366} (2016), 941-979, DOI 10.1007/s00208-015-1356-z
	
	\bibitem{Ke-Si-Za} J. Kemppainen, J. Siljander, R. Zacher, Representation of solutions and large time behavior for fully nonlocal diffusion equations, J. Differential Equations {\bf 263} (2017), 149-201. 
	
	
	\bibitem{Ko1990} A. N. Kochubei,  Fractional-order diffusion, Differential Equations {\bf26} (1990), 485-492.
	
	\bibitem{Ko2008}  A. N. Kochubei, Distributed order calculus and equations of ultraslow diffusion, J. Math. Anal. Appl. {\bf 340} (2008), 252-281.
	
	\bibitem{Ku-Ry-Ya} {A. Kubica, K. Ryszewska, M. Yamamoto}, Theory of Time-fractional Differential Equations An Introduction, Springer Japan, Tokyo, 2020.
	
	
	
	\bibitem{Li3} L. Li, J-G. Liu, L. Wang, Cauchy problems for Keller--Segel type time-space fractional diffusion equation, J. Differential Equations \textbf{265} (2018), 1044-1096. 
	
	
	
	
	
	
	\bibitem{Mai1996} F. Mainardi, Fractional relaxation-oscillation and fractional diffusion-wave phenomena, Chaos, Solitons and Fractals {\bf7} (1996), 1461-1477.
	
	\bibitem{Mai1} F. Mainardi, On the initial value problem for the fractional diffusion-wave equation. Ser. Adv. Math. Appl. Sci. \textbf{23} (1994), 246-251.
	
	\bibitem{Mai2} F. Mainardi, G. Pagnini, The Wright functions of the time-fractional diffusion equation, Applied Mathematics and Computation \textbf{141} (2003), 51-62.
	
	\bibitem{Met-Kla} R. Metzler, J. Klafter, The random walk’s guide to anomalous diffusion: a fractional dynamics approach, Phys. Rep. {\bf339} (2000), 1-77.
	
	\bibitem{Hung}	{Q.-H. Nguyen, J. L. V\'azquez}: Porous medium equation with nonlocal pressure in a bounded domain, Commun. PDEs {\bf43} (2018), 1502-1539. 
	
	
	\bibitem{Rak} J. M. Rakotoson, R. Temam, An optimal compactness theorem and application to 
	elliptic-parabolic systems,  Appl. Math. Lett. \textbf{14} (2001), 303-306. 	
	
	
	\bibitem{StTsvz17} {D. Stan, F. del Teso, J. L. V\'{a}zquez}, Existence of weak solutions for  porous medium equations with nonlocal pressure, Arch. Ration. Mech. Anal. {\bf 233} (2019),  451-496.
	
	\bibitem{Stein} E. Stein, Singular Integrals and Differentiability Properties of Functions. Princeton University Press, Princeton 1970.
	
	
	\bibitem{vazB1} J. L. V\'azquez, The porous medium equation, Oxford Mathematical Monographs, The
	Clarendon Press, Oxford University Press, Oxford, 2007, Mathematical theory. 
	
	
	
	
	
	\bibitem{Zac3} {V. Vergara, R. Zacher}, Optimal decay estimates for time-fractional and other non-local subdiffusion equations via energy methods, Siam J. Math. Anal. \textbf{47} (2015), 210-239. 
	
	
	\bibitem{Xiao}R. N. Wang, D. H Chen, T. J. Xiao, Abstract fractional Cauchy problems with almost 
	sectorial operators, J. Differential Equations  \textbf{252} (2012), 202-235. 
	
	
	
	
	
	
	
	
	
	\bibitem{Zacher2008} {R. Zacher}, Boundedness of weak solutions to evolutionary partial integro-differential equations with discontinuous coefficients. J. Math. Anal. Appl. {\bf 348} (2008), 137-149.   
	
	\bibitem{Zacher2009}{R. Zacher}, Weak solutions of abstract evolutionary integro-differential equations in Hilbert spaces,  Funkcial. Ekvac. {\bf 52} (2009), 1-18.
	
	\bibitem{Zac1} {R. Zacher}, De Giorgi--Nash type theorem for time fractional diffusion equations, Math. Ann. \textbf{356} (2013), 99-146.
	
\end{thebibliography}
\end{document}